\newtheorem{thm}{Theorem}[section]
\newtheorem{prop}[thm]{Proposition}
\newtheorem{lemma}[thm]{Lemma}
\newtheorem{cor}[thm]{Corollary}
\newtheorem{definition}[thm]{Definition}
\newtheorem{remark}[thm]{Remark}
\newtheorem{example}[thm]{Example}
\numberwithin{equation}{section}
\def\cB{\mathcal{B}}
\def\cP{\mathcal{P}}
\def\supp{\mathrm{supp}}
\def\bP{\mathbf{P}}
\def\bd{\mathbf{d}}
\def\ffi{\varphi}
\def\bZ{\mathbb{Z}}
\def\bN{\mathbb{N}}
\def\bR{\mathbb{R}}
\def\1{\mathbf{1}}
\def\eps{\varepsilon}
\def\diam{\mathrm{diam}}
\def\bM{\mathbb{M}}
\def\cA{\mathcal{A}}
\def\bbP{\mathbb{P}}
\def\T{\mathrm{T}}
\def\argmin{\mathop{\mathrm{arg\,min}}}
\def\cH{\mathcal{H}}
\def\cI{\mathcal{I}}
\DeclareMathOperator{\ua}{\uparrow\!}
\def\cX{\mathcal{X}}
\newcommand{\ve}{\varepsilon}
\def\bx{\mathbf{x}}
\def\cC{\mathcal{C}}
\def\cM{\mathcal{M}}
\def\dT{d_\T}
\def\bE{\mathbf{E}}
\begin{document}
\baselineskip=16pt

\allowdisplaybreaks

\title[Convergence theorems]
{Convergence theorems for barycentric maps}
\author[F.~Hiai]{Fumio Hiai}
\address{Tohoku University (Emeritus), Hakusan 3-8-16-303, Abiko 270-1154, Japan}\email{hiai.fumio@gmail.com}
\author[Y.~Lim]{Yongdo Lim}
\address{Department of Mathematics, Sungkyunkwan University, Suwon 440-746, Korea} \email{ylim@skku.edu}
\maketitle

\begin{abstract}
We first develop a theory of conditional expectations for random
variables with values in a complete metric space $M$ equipped with a
contractive barycentric map $\beta$, and then give convergence
theorems for martingales of $\beta$-conditional expectations. We
give the Birkhoff ergodic theorem for $\beta$-values of ergodic
empirical measures and provide a description of the ergodic limit
function in terms of the $\beta$-conditional expectation. Moreover,
we prove the continuity property of the ergodic limit function by
finding a complete metric between contractive barycentric maps on
the Wasserstein space of Borel probability measures on $M$. Finally, the
large derivation property of $\beta$-values of i.i.d.\ empirical
measures is obtained by applying the Sanov large deviation
principle.
\end{abstract}

\medskip
\noindent \textit{2010 Mathematics Subject Classification}.
Primary 60B05, 47H25, 60G48; secondary 37A30, 60F10, 54E40

\noindent \textit{Key words and phrases.} Contractive barycentric
map, conditional expectation,  martingale, ergodic theorem, large
derivation principle

\section{Introduction and preliminaries}
The main purpose of the present paper is to establish several
convergence theorems for random variables with values in a complete
metric space $(M,d)$ equipped with a contractive barycentric map
$\beta:{\mathcal P}^{p}(M)\to M$, where ${\mathcal P}^{p}(M)$ is the
Wasserstein space of Borel probability measures with finite $p$th
moment. This important class of metric spaces with contractive
barycentric maps contains all Banach spaces, metric spaces that are
nonpositively curved in the weak sense of Busemann, including global
NPC spaces, and convex metric spaces \cite{LL2,LLL}. For instance, a
typical convex metric space is the Banach-Finsler manifold of
positive invertible operators on a Hilbert space equipped with the
Thompson metric. We need no extra condition on the underlying space
$M$, like separability or local compactness,  except only the
existence of a contractive barycentric map $\beta:{\mathcal
P}^{p}(M)\to M$ for some $p\in [1,\infty)$.

As usual, a barycentric map is useful to define expectations of
$p$th integrable $M$-valued random variables via push-forward
measures. However, defining conditional expectations of random
variables with values in a metric space is non-trivial, as
previously discussed by Es-Sahib and Heinich \cite{EH}, Sturm
\cite{St02} and others (as referenced in \cite{EH,St02}). In Section
2, when a probability space is standard Borel, we introduce, by
using the disintegration theorem, the $\beta$-conditional
expectation and derive its fundamental properties including the
contractive and projective properties. We show that our conditional
expectation coincides with Sturm's conditional expectation
\cite{St02} when restricted to the canonical barycentric map on a
global NPC space. In Section 3, motivated by Sturm's martingale
convergence theorem \cite{St02} on a global NPC space, we obtain the
convergence theorem in the sense of $L^p$ and almost everywhere
convergence for $\beta$-martingales of regular type. We also
discuss filtered $\beta$-martingales of Sturm's type.

The most natural problem for contractive barycentric maps is an
extension of the classical Birkhoff ergodic theorem. Ergodic type
results were formerly given in \cite{EH,St} for $L^1$ or $L^2$
i.i.d.\ random variables in nonpositively curved spaces. More
recently, Austin \cite{Au} obtained an $L^2$-ergodic theorem for the
canonical barycentric map on a global NPC space, and Navas \cite{Na}
obtained an $L^1$-ergodic theorem for a specific contractive
barycentric map on a metric space of nonpositive curvature in the
sense of Busemann. The paper \cite{Li} contains an extension of
Navas' ergodic theorem to the parametrized version of the Cartan
barycenter. In Section 4 we review the $L^p$-ergodic theorem in
\cite{Au,Na} for the $\beta$-expectation values of the ergodic
empirical measures in the setting of a general barycentric space
$(M,d,\beta)$. We also provide the description of the ergodic limit
function in terms of the $\beta$-conditional expectation.

There exists many distinct contractive barycentric maps on a fixed
barycentric space $(M,d,\beta)$; for instance, see Remark 6.4 and
Example 6.5 of \cite{St}. In Section 5 we study perturbations for
the ergodic convergence theorem varying over contractive
barycentric maps. We introduce a  complete metric on the set of all
$p$-contractive barycentric maps on $M$ and then  show the
continuity of the ergodic limit function varying over the pairs of
barycentric maps and $p$th integrable random variables.  For the
global NPC space case, we construct a semiflow of contractive
barycentric maps such that the canonical barycentric map plays as a
global attractor fixed point. The convergence of ergodic limits
along any trajectory of barycentric maps to that of the canonical
barycentric map is established as an application of our
$\beta$-convergence theorems.

Finally, in Section 6 we present the large derivation principle for
the $\beta$-values of the empirical measures of $M$-valued i.i.d.\
random variables, which is a stronger version of Sturm's  empirical
law of large numbers \cite{St}.

In order to give precise formulations of the above results, one
needs to recall some backgrounds on measurable $M$-valued functions,
Borel probability measures on $M$, and so on, which are summarized
in the rest of this introductory section.

Let $(M,d)$ be a complete metric space and $\cB(M)$ be the
$\sigma$-algebra of Borel subsets of $M$. Let $\cP(M)$ be the set of
all probability measures on $\cB(M)$ with full support,
  and $\cP_0(M)$ be the set of $\mu\in\cP(M)$ of the form
$\mu=\frac{1}{n}\sum_{j=1}^{n}\delta_{x_j}$ with some $n\in\bN$ and
$x_1,\dots,x_n\in M$. We note \cite{HLL} that every $\mu\in \cP(M)$
has separable support and is the week limit of a sequence of
finitely supported measures. For $1\le p<\infty$ let $\cP^p(M)$ be
the set of $\mu\in\cP(M)$ such that $\int_Md^p(x,y)\,d\mu(y)<\infty$
for some (equivalently, for all) $x\in M$, and $\cP^\infty(M)$ be
the set of $\mu\in\cP(M)$ with bounded support, i.e., $\mu$ is
supported on $\{y\in M:d(x,y)\le\alpha\}$ for some $x\in M$ and some
$\alpha<\infty$. Obviously,
\begin{align}\label{F-1.1}
\cP^1(M)\supset\cP^p(M)\supset\cP^q(M)\supset\cP^\infty(M)\qquad
\mbox{for}\quad1<p<q<\infty.
\end{align}

For $1\le p<\infty$ the {\it $p$-Wasserstein distance} on $\cP^p(M)$ is defined as
$$
d_p^W(\mu,\nu):=\biggl[\inf_{\pi\in\Pi(\mu,\nu)}\int_{M\times M}
d^p(x,y)\,d\pi(x,y)\biggr]^{1/p},\qquad\mu,\nu\in\cP^p(M),
$$
where $\Pi(\mu,\nu)$ denotes the set of $\pi\in\cP(M\times M)$ such that
$\pi(B\times M)=\mu(B)$ and $\pi(M\times B)=\nu(B)$ for all $B\in\cB(M)$.
Moreover, for $p=\infty$ we define
$$
d_\infty^W(\mu,\nu):=\inf_{\pi\in\Pi(\mu,\nu)}\sup\{d(x,y):(x,y)\in\supp(\pi)\},
\qquad\mu,\nu\in\cP^\infty(M).
$$
Note that
\begin{align}\label{F-1.2}
d_1^W\leq d_p^W\leq d_q^W\leq d_\infty^W\qquad
\mbox{for}\quad1<p<q<\infty.
\end{align}
It is well-known \cite{St} that $d_p^W$ is a complete metric on
$\cP^p(M)$ for $1\le p\le\infty$ and $\cP_0(M)$ is dense in
$\cP^p(M)$ for $1\le p<\infty$.

Let $(\Omega,\cA,\bP)$ be a probability space. A Borel measurable
function $\ffi:\Omega\to M$ (i.e., measurable with respect to $\cA$ and
$\cB(M)$ or $M$-valued random variable) is {\it
strongly measurable} if there exists a sequence $\{\ffi_n\}$ of
$M$-valued simple functions, i.e.,
$\ffi_n(\omega)=\sum_{j=1}^{K_n}\1_{A_{n,j}}x_{n,j}$ with
$A_{n,j}\in\cA$ and $x_{n,j}\in M$, such that
$d(\ffi_n(\omega),\ffi(\omega))\to0$ for a.e.\ $\omega\in\Omega$. From the
definition it follows that if $\ffi:\Omega\to M$ is strongly
measurable, then there exists a $\bP$-null set $N\in\cA$ for which
$\{\ffi(\omega):\omega\in\Omega\setminus N\}$ is a separable subset of
$M$ and for any $x\in M$ the function $\omega\in\Omega\setminus
N\mapsto d(x,\ffi(\omega))$ is $\cA$-measurable. Hence the integral
$\int_\Omega d^p(x,\ffi(\omega))\,d\bP(\omega)$
makes sense for any $p\in(0,\infty)$. For each $p\in[1,\infty)$, we
say that a function $\ffi:\Omega\to M$ is {\it $p$th Bochner
integrable} if $\ffi$ is strongly measurable and $\int_\Omega
d^p(x,\ffi(\omega))\,d\bP(\omega)<\infty$ for
some (equivalently, for all) $x\in M$. We denote by
$$
L^p(\Omega;M)=L^p(\Omega,\cA,\bP;M)
$$
the set of all $M$-valued $p$th Bochner integrable functions. We also denote by
$$
L^\infty(\Omega;M)=L^\infty(\Omega,\cA,\bP;M)
$$
the set of all strongly measurable functions $f:\Omega\to M$ such that $d(x,f(\omega))$ is
essentially bounded for some (equivalently, for all) $x\in M$. As usual, for
$\ffi,\psi\in L^p(\Omega;M)$ we consider $\ffi=\psi$ whenever $\ffi(\omega)=\psi(\omega)$
a.e. Obviously,
\begin{align}\label{F-1.3}
L^1(\Omega;M)\supset L^p(\Omega;M)\supset L^q(\Omega;M)\supset L^\infty(\Omega;M)
\qquad\mbox{for}\quad1<p<q<\infty.
\end{align}

The theory of Bochner integrable functions mostly treats measurable functions
with values in a Banach space (see, e.g., \cite{DU}), but basic definitions and results are
valid for measurable functions with values in a complete metric space as well. For instance,
a standard argument gives:

\begin{lemma}\label{L-1.1}
For every $1\le p\le\infty$, the set $L^p(\Omega;M)$ is a complete metric space with the usual
$L^p$-distance
$$
\bd_p(\ffi,\psi):=\biggl[\int_\Omega d^p(\ffi(\omega),\psi(\omega))\,d\bP(\omega)\biggr]^{1/p}
\qquad\mbox{if $1\le p<\infty$},
$$
and
$\bd_\infty(\ffi,\psi):=\mathop{\mathrm{ess\,sup}}_{\omega\in\Omega}
d(\ffi(\omega),\psi(\omega))$ for $p=\infty$. The set of $M$-valued
simple functions is dense in $L^p(\Omega;M)$ for $1\le p<\infty$,
and the set of countably valued functions in $L^\infty(\Omega;M)$ is
dense in $L^\infty(\Omega;M)$.
\end{lemma}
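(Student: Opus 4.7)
The plan is to verify in order: (i) $\bd_p$ is well-defined as a metric on $L^p(\Omega;M)$ modulo a.e.\ equality, (ii) completeness, (iii) density of simple functions for $1\le p<\infty$, and (iv) density of countably valued functions for $p=\infty$. The whole argument transposes the classical Bochner-integration proofs (as in \cite{DU}) from the Banach-space setting to metric-space-valued strongly measurable functions; the only genuine care one needs is that $M$ may be non-separable, but strong measurability of $\ffi$ forces its range to lie a.e.\ in a separable subset of $M$, which is all that will be used.

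For (i), the starting point is that $\omega\mapsto d(\ffi(\omega),\psi(\omega))$ is a real-valued measurable function when $\ffi,\psi$ are strongly measurable: pick simple approximants $\ffi_n\to\ffi$ and $\psi_n\to\psi$ a.e., note that each $d(\ffi_n,\psi_n)$ is finitely valued and measurable, and pass to the pointwise a.e.\ limit. Finiteness of $\bd_p(\ffi,\psi)$ for $\ffi,\psi\in L^p(\Omega;M)$ follows from $d^p(\ffi,\psi)\le 2^{p-1}(d^p(x,\ffi)+d^p(x,\psi))$ for any fixed $x\in M$, and the triangle inequality $\bd_p(\ffi,\psi)\le\bd_p(\ffi,\eta)+\bd_p(\eta,\psi)$ is Minkowski's inequality in $L^p(\Omega,\bP)$ applied to the pointwise triangle inequality for $d$. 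The case $p=\infty$ is the obvious essential-supremum analogue, and identity of indiscernibles holds by our convention of identifying functions equal a.e.

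For (ii), I would follow the Riesz--Fischer recipe. Given a Cauchy sequence $\{\ffi_n\}$ in $L^p(\Omega;M)$ with $p<\infty$, extract a subsequence with $\bd_p(\ffi_{n_{k+1}},\ffi_{n_k})\le 2^{-k}$. Minkowski's inequality gives $\bigl\|\sum_k d(\ffi_{n_{k+1}},\ffi_{n_k})\bigr\|_p<\infty$, so this sum is finite for a.e.\ $\omega$. Completeness of $M$ then produces a pointwise a.e.\ limit $\ffi$, which is strongly measurable because its range is a.e.\ contained in the separable set generated by the ranges of the $\ffi_{n_k}$ and because $\omega\mapsto d(x,\ffi(\omega))$ is an a.e.\ limit of $\cA$-measurable functions. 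Fatou's lemma gives $\ffi\in L^p(\Omega;M)$ together with $\bd_p(\ffi,\ffi_{n_k})\to 0$, and a standard Cauchy-subsequence trick promotes this to $\bd_p(\ffi,\ffi_n)\to 0$. For $p=\infty$ the same subsequence approach works, replacing the summable real series argument by a uniform Cauchy statement off a common null set.

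For (iii), starting from $\ffi\in L^p(\Omega;M)$ and simple $\ffi_n\to\ffi$ a.e.\ (as provided by strong measurability), the only obstacle is producing an $L^p$-dominating function for $d(\ffi_n,\ffi)$. Fixing $x_0\in M$, I would truncate by setting
$$
\tilde\ffi_n(\omega):=\begin{cases}\ffi_n(\omega),& d(x_0,\ffi_n(\omega))\le 2d(x_0,\ffi(\omega))+1,\\ x_0,& \text{otherwise}.\end{cases}
$$
Each $\tilde\ffi_n$ is simple, satisfies $d(\tilde\ffi_n,\ffi)\le 3d(x_0,\ffi)+1\in L^p(\Omega,\bP)$, and coincides with $\ffi_n$ for all large $n$ at a.e.\ $\omega$ (since $d(x_0,\ffi_n)\to d(x_0,\ffi)$), so $\tilde\ffi_n\to\ffi$ a.e.\ and dominated convergence finishes. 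For (iv), a function $\ffi\in L^\infty(\Omega;M)$ has essentially separable and essentially bounded range; cover this range by countably many open balls of radius $\ve$ centered at points $y_1,y_2,\dots$, and set $\ffi_\ve(\omega):=y_{k(\omega)}$ where $k(\omega)$ is the least $k$ with $d(\ffi(\omega),y_k)<\ve$. Then $\ffi_\ve$ is countably valued, measurable, and $\bd_\infty(\ffi_\ve,\ffi)\le\ve$. The principal subtle point is the truncation in (iii), needed because neither local compactness nor separability of $M$ is assumed globally.
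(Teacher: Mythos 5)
Your proof is correct. The paper in fact gives no proof of this lemma---it is stated as the outcome of ``a standard argument'' adapting the Bochner-integration theory of \cite{DU} to metric-space-valued functions---and your argument is precisely that standard argument (Riesz--Fischer completeness via a rapidly Cauchy subsequence, and the covering construction for countably valued approximation), with the one genuinely non-Banach point, namely the truncation $\tilde\ffi_n$ needed to get a dominating function for the density of simple functions in the absence of a linear structure, handled correctly.
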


\begin{lemma}\label{L-1.2}
Let $1\le p\le\infty$.
\begin{itemize}
\item[(1)] If $\ffi\in L^p(\Omega;M)$, then the push-forward measure $\ffi_*\bP$ by $\varphi$
belongs to $\cP^p(M)$.
\item[(2)] If $\ffi,\psi\in L^p(\Omega;M)$, then
$d_p^W(\ffi_*\bP,\psi_*\bP)\le\bd_p(\ffi,\psi)$.
\end{itemize}
\end{lemma}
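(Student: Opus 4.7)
Both assertions follow from the change-of-variables formula for push-forward measures, together with the observation that $(\ffi,\psi)_*\bP$ is a natural coupling of $\ffi_*\bP$ and $\psi_*\bP$.

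For (1), I would fix an arbitrary $x\in M$ and apply the standard change-of-variables identity
\[\int_M d^p(x,y)\,d(\ffi_*\bP)(y) \;=\; \int_\Omega d^p(x,\ffi(\omega))\,d\bP(\omega),\]
whose right-hand side is finite by the hypothesis $\ffi\in L^p(\Omega;M)$. The integrand $d(x,\ffi(\cdot))$ is $\cA$-measurable because, by strong measurability of $\ffi$, it is the pointwise a.e.\ limit of the measurable functions $d(x,\ffi_n(\cdot))$ for any defining simple-function sequence $\{\ffi_n\}$; for $p=\infty$, the same identity with the integral replaced by essential supremum gives the required bound.

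For (2), the first step is to observe that the joint map $(\ffi,\psi):\Omega\to M\times M$ is strongly measurable in the product metric: on the intersection of the two full-measure sets supplied by the strong measurability of $\ffi$ and $\psi$, the pairs $(\ffi_n,\psi_n)$ of simple-function approximations converge pointwise to $(\ffi,\psi)$. Consequently $\pi:=(\ffi,\psi)_*\bP$ is a well-defined Borel probability measure on $M\times M$, and its marginals are precisely $\ffi_*\bP$ and $\psi_*\bP$, so $\pi\in\Pi(\ffi_*\bP,\psi_*\bP)$. For $1\le p<\infty$, change of variables applied to this single coupling yields
\[\int_{M\times M} d^p(x,y)\,d\pi(x,y) \;=\; \int_\Omega d^p(\ffi(\omega),\psi(\omega))\,d\bP(\omega) \;=\; \bd_p(\ffi,\psi)^p,\]
and taking the infimum over $\Pi(\ffi_*\bP,\psi_*\bP)$ gives the desired bound. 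For $p=\infty$, since $d(\ffi(\omega),\psi(\omega))\le\bd_\infty(\ffi,\psi)$ off a $\bP$-null set, the closed set $\{(x,y):d(x,y)\le\bd_\infty(\ffi,\psi)\}$ has $\pi$-measure $1$ and hence contains $\supp(\pi)$, so $d_\infty^W(\ffi_*\bP,\psi_*\bP)\le\bd_\infty(\ffi,\psi)$.

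The only mild technicality is the strong measurability of $(\ffi,\psi)$, which is needed so that $\pi$ is a genuine Borel measure on $M\times M$; this matters because $M$ need not be separable and so Borel $\sigma$-algebras do not automatically tensorize, but once the combined simple-function approximation is in hand the rest of the argument is a routine change of variables.
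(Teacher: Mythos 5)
Your proof is correct and follows essentially the same route as the paper's: the change-of-variables identity for (1), and for (2) the single coupling $\pi=(\ffi\times\psi)_*\bP$ whose marginals are $\ffi_*\bP$ and $\psi_*\bP$, with the same closed-support argument $\supp(\pi)\subset\{(x,y):d(x,y)\le\bd_\infty(\ffi,\psi)\}$ for $p=\infty$. One minor omission in (1): since the paper defines $\cP^p(M)$ as a subset of $\cP(M)$ (measures with separable support), you should also record that $\ffi_*\bP$ is supported in a separable closed set $M_0$ --- exactly the essentially-separable-range consequence of strong measurability that you do invoke in (2) --- before checking the moment (or bounded-support) condition.
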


\begin{proof}
(1)\enspace
Let $\ffi\in L^p(\Omega;M)$. There exists a separable closed set $M_0\subset M$ such that
$\ffi(\omega)\in M_0$ for a.e.\ $\omega\in\Omega$. Since
$(\ffi_*\bP)(M_0)=\bP(\ffi^{-1}(M_0))=1$, $\supp(\ffi_*\bP)\subset M_0$ and so
$\ffi_*\bP\in\cP(M)$. Moreover, when $1\le p<\infty$,
$$
\int_Md^p(x,y)\,d(\ffi_*\bP)(y)=\int_\Omega d^p(x,\ffi(\omega))\,d\bP(\omega)<\infty.
$$
When $p=\infty$, we have $d(x,\ffi(\omega))\le\alpha$ a.e.\ for some $\alpha<\infty$,
and hence $\supp(\ffi_*\bP)\subset\{y\in M:d(x,y)\le\alpha\}$.

(2)\enspace
Let $\ffi,\psi\in L^p(\Omega;M)$. Set $\pi:=(\ffi\times\psi)_*\bP$, the push-forward of $\bP$
by the map $\omega\in\Omega\mapsto(\ffi(\omega),\psi(\omega))\in M\times M$. As in the proof of
(1), we have $\pi\in\cP(M\times M)$. For any $B\in\cB(M)$, $\pi(B\times M)=\bP(\ffi^{-1}(B))$
and $\pi(M\times B)=\bP(\psi^{-1}(B))$, so $\pi\in\Pi(\ffi_*\bP,\psi_*\bP)$. Therefore, when
$1\le p<\infty$,
\begin{align*}
d_p^W(\ffi_*\bP,\psi_*\bP)
&\le\biggl[\int_{M\times M}d^p(x,y)\,d(\ffi\times\psi)_*\bP(x,y)\biggr]^{1/p} \\
&=\biggl[\int_\Omega d^p(\ffi(\omega),\psi(\omega))\,d\bP(\omega)\biggr]^{1/p}=\bd_p(\ffi,\psi).
\end{align*}
When $p=\infty$, let $\alpha:=\bd_\infty(\ffi,\psi)$ and
$\Delta:=\{(x,y)\in M\times M:d(x,y)\le\alpha\}$. Then
$\pi(\Delta)=1$, and we have $\supp(\pi)\subset\Delta$, so
$d_\infty^W(\ffi_*\bP,\psi_*\bP)\le\alpha$.
\end{proof}

The following lemma will play an essential role for our purpose. In
fact, a similar inequality follows by specializing \cite[Proposition
7.10]{Vi} to $\mu=\sum_{i=1}^K\alpha_i\delta_{x_i}$ and
$\nu=\sum_{i=1}^K\beta_i\delta_{x_i}$.  The following proof is a
modification (in the specialized situation) of that in \cite{Vi}.

\begin{lemma}\label{L-1.3}
Assume that $1\le p<\infty$. Let $x_1,\dots,x_K\in M$, and $(\alpha_1,\dots,\alpha_K)$ and
$(\beta_1,\dots,\beta_K)$ be probability vectors. Then
$$
d_p^W\Biggl(\sum_{i=1}^K\alpha_i\delta_{x_i},\sum_{i=1}^K\beta_i\delta_{x_i}\Biggr)
\le\Delta\Biggl[{1\over2}\sum_{i=1}^K|\alpha_i-\beta_i|\Biggr]^{1/p},
$$
where $\Delta:=\diam\{x_1,\dots,x_K\}$, the diameter of
$\{x_1,\dots,x_K\}$.
\end{lemma}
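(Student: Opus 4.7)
The plan is to construct an explicit coupling $\pi\in\Pi(\mu,\nu)$ between $\mu:=\sum_{i=1}^K\alpha_i\delta_{x_i}$ and $\nu:=\sum_{i=1}^K\beta_i\delta_{x_i}$ that puts as much mass as possible on the diagonal, and then estimate the Wasserstein cost directly using the diameter bound. Since $d_p^W(\mu,\nu)$ is defined as an infimum over couplings, exhibiting a single good $\pi$ suffices.

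First I would write $\gamma_i:=\min(\alpha_i,\beta_i)$, $a_i:=(\alpha_i-\beta_i)_+$, $b_i:=(\beta_i-\alpha_i)_+$, so that $\alpha_i=\gamma_i+a_i$, $\beta_i=\gamma_i+b_i$, and $a_ib_i=0$ for every $i$. A standard computation gives $S:=\sum_ia_i=\sum_ib_i=\tfrac12\sum_i|\alpha_i-\beta_i|$. If $S=0$ the inequality is trivial ($\mu=\nu$), so I may assume $S>0$. Then I would define
$$
\pi:=\sum_{i=1}^K\gamma_i\,\delta_{(x_i,x_i)}+\frac1S\sum_{i,j=1}^Ka_ib_j\,\delta_{(x_i,x_j)},
$$
i.e.\ the common mass $\gamma_i$ is placed on the diagonal, and the leftover mass is glued by the independent coupling of $(a_i/S)$ and $(b_j/S)$. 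Checking the marginals: the first marginal at $x_i$ is $\gamma_i+\frac{a_i}{S}\sum_j b_j=\gamma_i+a_i=\alpha_i$, and symmetrically the second marginal at $x_j$ equals $\beta_j$; total mass is $\sum_i\gamma_i+S=1$. Thus $\pi\in\Pi(\mu,\nu)$.

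Next I would estimate the transport cost. The diagonal part contributes $0$ since $d(x_i,x_i)=0$, and on the off-diagonal part $d^p(x_i,x_j)\le\Delta^p$, so
$$
\int_{M\times M}d^p(x,y)\,d\pi(x,y)=\frac1S\sum_{i,j=1}^Ka_ib_j\,d^p(x_i,x_j)\le\frac{\Delta^p}{S}\sum_{i,j=1}^Ka_ib_j=\Delta^p S.
$$
Taking $p$th roots and using $S=\tfrac12\sum_i|\alpha_i-\beta_i|$ yields the claimed inequality.

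There is no real obstacle here: the only conceptual step is choosing the coupling so that the transported mass is exactly $S$, after which the bound $d(x_i,x_j)\le\Delta$ does all the work. The proof of \cite[Proposition~7.10]{Vi} does essentially the same construction for arbitrary $\mu,\nu$; in the present finitely supported setting the mass decomposition $\alpha_i=\gamma_i+a_i$, $\beta_i=\gamma_i+b_i$ bypasses all measure-theoretic technicalities.
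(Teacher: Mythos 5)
Your proof is correct and follows essentially the same route as the paper: your coupling $\pi$ (common mass $\gamma_i$ on the diagonal plus the normalized product coupling $\frac{1}{S}a_ib_j$ of the leftover masses) is identical to the paper's $\pi=\sum_i\gamma_i\delta_{(x_i,x_i)}+\sum_{i\in I,\,j\in J}\rho_{ij}\delta_{(x_i,x_j)}$, since $a_i=\alpha_i-\gamma_i$, $b_j=\beta_j-\gamma_j$ and $\rho_{ij}=a_ib_j/S$, and the diameter estimate of the transport cost is the same. The only (harmless) difference is notational, using positive parts instead of the index sets $I,J$, plus your explicit treatment of the trivial case $S=0$.
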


\begin{proof}
Let $\gamma_i:=\min\{\alpha_i,\beta_i\}$ for $1\le i\le K$, $I:=\{i:\alpha_i>\gamma_i\}$ and
$J:=\{i:\beta_i>\gamma_i\}$. It is clear that $I\cap J=\emptyset$ and
$$
\sum_{i\in I}(\alpha_i-\gamma_i)=\sum_{i=1}^K(\alpha_i-\gamma_i)
=\sum_{j=1}^K(\beta_j-\gamma_j)=\sum_{j\in J}(\beta_j-\gamma_j)
={1\over2}\sum_{j=1}^K|\alpha_j-\beta_j|.
$$
Let $\rho_{ij}:=(\alpha_i-\gamma_i)(\beta_j-\gamma_j)/\sum_{k\in
J}(\beta_k-\gamma_k)$ for $i\in I$ and $j\in J$; then it is
immediate to check that $ \alpha_i-\gamma_i=\sum_{j\in
J}\rho_{ij}$ ($i\in I$) and $\beta_j-\gamma_j=\sum_{i\in
I}\rho_{ij}$ ($j\in J$).
One can define
$\pi\in\Pi\bigl(\sum_i\alpha_i\delta_{x_i},\sum_i\beta_i\delta_{x_i}\bigr)$
by $ \pi:=\sum_{i=1}^K\gamma_i\delta_{(x_i,x_i)} +\sum_{i\in
I,\,j\in J}\rho_{ij}\delta_{(x_i,x_j)}$. Therefore,
\begin{align*}
d_p^W\Biggl(\sum_{i=1}^K\alpha_i\delta_{x_i},\sum_{i=1}^K\beta_i\delta_{x_i}\Biggr)
&\le\biggl[\int_{M\times
M}d^p(x,y)\,d\pi(x,y)\biggr]^{1/p}=\Biggl[\sum_{i\in I,\,j\in
J}\rho_{ij}d^p(x_i,x_j)\Biggr]^{1/p} \\
&\le\Delta\Biggl[\sum_{i\in I}(\alpha_i-\gamma_i)\Biggr]^{1/p}
=\Delta\Biggl[{1\over2}\sum_{i=1}^K|\alpha_i-\beta_i|\Biggr]^{1/p}.
\end{align*}
\end{proof}

Let $(X,\le)$ be a partially ordered set. For a nonempty subset $A$
of $X$, let $\ua A:=\{y\in X:x\leq y\ \mbox{for some $x\in A$}\}$.
 We say that $A$
is an \emph{upper set} if $\ua A=A.$
Assume that a complete metric space $M$ is equipped with a closed
partial order $\leq$; i.e., $\{(x,y):x\leq y\}$ is closed in
$M\times M$ equipped with the product topology. The \emph{stochastic
order} on $\cP(M)$ introduced in \cite{HLL} is defined by $\mu\leq
\nu$ if $\mu(U)\leq \nu(U)$ for every open upper set $U$,
Several equivalent conditions of $\mu\le\nu$ were given in
\cite{HLL}. We note from \cite{Law,HLL} that for
$\mu=\frac{1}{n}\sum_{j=1}^{n}\delta_{a_{j}}$ and
$\nu=\frac{1}{n}\sum_{j=1}^{n}\delta_{b_{j}}$, $\mu\leq \nu$ if and only
if there exists a permutation $\sigma$ on $\{1,\dots,n\}$ such that
$a_{j}\leq b_{\sigma(j)}$ for all $j=1,\dots,n$.

Assume that $E$ is a real Banach space containing an open convex
cone $C$ such that $\overline C$ is a normal cone (cf.\ \cite{De}).
The cone $\overline C$ defines a closed partial order on $E$ (hence
on $C$) by $x\le y$ if $y-x\in\overline C$. Moreover, $C$ is a
complete metric space with the \emph{Thompson metric} \cite{Th,Nu}
defined by $ \dT(x,y):=\max\{\log M(x/y),\log M(y/x)\},$ where
$M(x/y):=\inf\{\lambda>0:x\le\lambda y\}$. Note that the
$\dT$-topology on $C$ coincides with the relative topology inherited
from $E$. Hence we may consider $\cP(C)$ on $(C,\dT)$. Then it was
shown in \cite{HLL} that the stochastic order on $\cP(C)$ is a
partial order. This is typically the case when $E$ is the algebra
$S(\cH)$ with the operator norm, consisting of self-adjoint bounded linear
operators on a Hilbert space $\cH$, and $C$ is the cone $\bbP(\cH)$
of positive invertible operators on $\cH$.

Now, assume that a complete metric space $M$ is equipped with a
closed partial order. For strongly measurable $M$-values functions
$\ffi,\psi$ on $\Omega$, we define $\ffi\le\psi$ if
$\ffi(\omega)\le\psi(\omega)$ a.e. (The definition makes sense since
$\{\omega:\ffi(\omega)\le\psi(\omega)\}$ is measurable up to a
$\bP$-null set.)

\begin{lemma}\label{L-1.4}
If $\ffi,\psi:\Omega\to M$ are strongly measurable and
$\ffi\le\psi$, then $\ffi_*\bP\le\psi_*\bP$.
\end{lemma}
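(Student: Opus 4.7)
The plan is to unwind the definitions and exploit the upper-set property directly, which makes the proof essentially a one-liner once we set it up.

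First, I would fix an arbitrary open upper set $U \subset M$ and reduce the claim $\ffi_*\bP \le \psi_*\bP$ to showing $\bP(\ffi^{-1}(U)) \le \bP(\psi^{-1}(U))$, which is exactly $(\ffi_*\bP)(U) \le (\psi_*\bP)(U)$ by definition of the push-forward. Measurability of $\ffi^{-1}(U)$ and $\psi^{-1}(U)$ is not an issue since strong measurability implies Borel measurability of $\ffi$ and $\psi$, and $U$ is open hence Borel.

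Next, I would invoke the hypothesis $\ffi \le \psi$: by definition there exists a $\bP$-null set $N \in \cA$ such that $\ffi(\omega) \le \psi(\omega)$ for every $\omega \in \Omega \setminus N$. The key observation is that for such $\omega$, if $\ffi(\omega) \in U$ then $\psi(\omega) \in \ua\{\ffi(\omega)\} \subset \ua U = U$, using that $U$ is an upper set. Hence
\[
\ffi^{-1}(U) \setminus N \subset \psi^{-1}(U),
\]
and therefore $\bP(\ffi^{-1}(U)) = \bP(\ffi^{-1}(U) \setminus N) \le \bP(\psi^{-1}(U))$, as required.

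Since $U$ was an arbitrary open upper set, this gives $\ffi_*\bP \le \psi_*\bP$ by the definition of the stochastic order recalled just before the lemma. There is no real obstacle here; the whole content is the set-theoretic implication ``$\ffi(\omega) \in U$ and $\ffi(\omega) \le \psi(\omega)$ $\Rightarrow$ $\psi(\omega) \in U$'' combined with the almost-sure comparison. One could equivalently phrase the argument by noting that the closed set $\{(x,y) \in M \times M : x \le y\}$ contains the support of the push-forward $(\ffi \times \psi)_*\bP$, but the direct upper-set argument above is shorter and avoids extra apparatus.
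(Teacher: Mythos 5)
Your proof is correct and is essentially identical to the paper's own argument: both fix an arbitrary open upper set $U$, use the null set $N$ outside of which $\ffi(\omega)\le\psi(\omega)$ to get $\ffi^{-1}(U)\setminus N\subset\psi^{-1}(U)$, and conclude $\bP(\ffi^{-1}(U))\le\bP(\psi^{-1}(U))$, hence $\ffi_*\bP\le\psi_*\bP$ by the definition of the stochastic order.
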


\begin{proof}
Assume that $\ffi(\omega)\le\psi(\omega)$ for all
$\omega\in\Omega\setminus N$ with a $\bP$-null set $N$. Let $U$ be
an open upper set. If $\omega\in\ffi^{-1}(U)\cap(\Omega\setminus
N)$, then $\ffi(\omega)\in U$ and $\ffi(\omega)\le\psi(\omega)$,
so $\psi(\omega)\in U$. Hence $\ffi^{-1}(U)\cap(\Omega\setminus
N)\subset\psi^{-1}(U)$, so that
$\bP(\ffi^{-1}(U))\le\bP(\psi^{-1}(U))$, implying
$\ffi_*\bP\le\psi_*\bP$.
\end{proof}

\section{Conditional expectations}

In this section, let $1\le p\le\infty$ be fixed, and assume that $\beta:\cP^p(M)\to M$ is a
{\it $p$-contractive barycentric map}, that is, $\beta(\delta_x)=x$ for all $x\in M$ and
\begin{align}\label{F-2.1}
d(\beta(\mu),\beta(\nu))\le d_p^W(\mu,\nu),\qquad\mu,\nu\in\cP^p(M).
\end{align}

\begin{definition}\label{D-2.1}\rm
Let $\ffi\in L^p(\Omega;M)$.
\begin{itemize}
\item[(1)]
Define the {\it $\beta$-expectation} $E^\beta(\ffi)\in M$ of $\ffi$
by
$$
E^\beta(\ffi):=\beta(\ffi_*\bP).
$$
This is well defined by Lemma \ref{L-1.2}\,(1).
\item[(2)] For every $A\in\cA$ with $\bP(A)>0$, consider the reduced probability space
$$
(A,\cA\cap A,\bP_A)\quad\mbox{where}\quad\bP_A:=\bP(A)^{-1}\bP|_{\cA\cap A}.
$$
Let $E^\beta(\ffi|_A)$ be the $\beta$-expectation of $\ffi|_A$ on $(A,\cA\cap A,\bP_A)$, i.e.,
$$
E^\beta(\ffi|_A):=\beta((\ffi|_A)_*\bP_A).
$$
\end{itemize}
\end{definition}

\begin{prop}\label{P-2.2}
Let $\ffi,\psi\in L^p(\Omega;M)$.
\begin{itemize}
\item[(1)] $d(E^\beta(\ffi),E^\beta(\psi))\le\bd_p(\ffi,\psi)$.
\item[(2)] $E^\beta(\1_\Omega x)=x$ for all $x\in M$.
\item[(3)] Assume that $M$ is equipped with a closed partial order and $\beta$ is monotone,
that is, for each $\mu,\nu\in\cP^p(M)$, $\mu\le\nu$ implies
$\beta(\mu)\le\beta(\nu)$. If $\ffi\le\psi$, then $E^\beta(\ffi)\le
E^\beta(\psi)$.
\end{itemize}
\end{prop}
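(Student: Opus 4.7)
The plan is to observe that all three parts follow by chaining the definition of $E^\beta$ with results already established in the excerpt: the contractivity of $\beta$ (equation \eqref{F-2.1}), Lemma~\ref{L-1.2}, and Lemma~\ref{L-1.4}. There is no real obstacle here; the only thing to watch is that the chain of inequalities works uniformly for $1\le p\le\infty$, which it does because Lemma~\ref{L-1.2}(2) and the definition of a $p$-contractive barycentric map are both stated in that generality.

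For (1), I would simply unfold the definition: $E^\beta(\ffi)=\beta(\ffi_*\bP)$ and $E^\beta(\psi)=\beta(\psi_*\bP)$, which are well-defined elements of $M$ by Lemma~\ref{L-1.2}(1). Applying the $p$-contractivity assumption \eqref{F-2.1} to the pair $(\ffi_*\bP,\psi_*\bP)\in\cP^p(M)\times\cP^p(M)$ gives
$$
d(E^\beta(\ffi),E^\beta(\psi))\le d_p^W(\ffi_*\bP,\psi_*\bP),
$$
and then Lemma~\ref{L-1.2}(2) bounds the right-hand side by $\bd_p(\ffi,\psi)$.

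For (2), the push-forward of $\bP$ by the constant map $\1_\Omega x$ is the Dirac measure $\delta_x$, so $E^\beta(\1_\Omega x)=\beta(\delta_x)=x$ by the very definition of a barycentric map.

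For (3), assume $\ffi\le\psi$ in the sense of the stochastic order on strongly measurable functions. Lemma~\ref{L-1.4} yields $\ffi_*\bP\le\psi_*\bP$ in $\cP^p(M)$, and applying the hypothesized monotonicity of $\beta$ to this inequality gives $E^\beta(\ffi)=\beta(\ffi_*\bP)\le\beta(\psi_*\bP)=E^\beta(\psi)$, as required.
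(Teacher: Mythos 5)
Your proposal is correct and follows exactly the paper's own argument: (1) is contractivity \eqref{F-2.1} chained with Lemma \ref{L-1.2}\,(2), (2) is the identity $(\1_\Omega x)_*\bP=\delta_x$ together with $\beta(\delta_x)=x$, and (3) is Lemma \ref{L-1.4} combined with the monotonicity hypothesis on $\beta$. Nothing is missing, and your observation that everything works uniformly for $1\le p\le\infty$ matches the generality in which the paper states the proposition.
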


\begin{proof}
(1)\enspace By \eqref{F-2.1} and Lemma \ref{L-1.2}\,(2),
$$
d(E^\beta(\ffi),E^\beta(\psi))=d(\beta(\ffi_*\bP),\beta(\psi_*\bP))
\le d_p^W(\ffi_*\bP,\psi_*\bP)\le\bd_p(\ffi,\psi).
$$

 (2)\enspace Since $(\1_\Omega x)_*\bP=\delta_x$,
$E^\beta(\1_\Omega x)=\beta(\delta_x)=x$.

(3) is obvious from Lemma \ref{L-1.4}.
\end{proof}

\begin{prop}\label{P-2.3}
Let $\ffi,\psi\in L^p(\Omega;M)$. If $E^\beta(\ffi|_A)=E^\beta(\psi|_A)$ for all $A\in\cA$ with
$\bP(A)>0$, then $\ffi=\psi$.
\end{prop}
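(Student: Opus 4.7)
The plan is to prove the contrapositive. Suppose $\bP(\{\omega : \ffi(\omega) \neq \psi(\omega)\}) > 0$; I will produce a single measurable set $A \in \cA$ with $\bP(A) > 0$, together with points $x_0, y_0 \in M$ and a radius $r > 0$ satisfying $d(x_0, y_0) > 2r$, such that $d(\ffi(\omega), x_0) < r$ and $d(\psi(\omega), y_0) < r$ for every $\omega \in A$. Granted such a set, I would apply Proposition~\ref{P-2.2} on the reduced probability space $(A, \cA \cap A, \bP_A)$: part~(2) gives $E^\beta(\1_A x_0) = x_0$, and part~(1) applied to the pair $(\ffi|_A,\,\1_A x_0)$ yields $d(E^\beta(\ffi|_A), x_0) \le r$, since the $\bd_p$-distance on $(A, \cA \cap A, \bP_A)$ between $\ffi|_A$ and the constant $x_0$ is at most $r$; similarly $d(E^\beta(\psi|_A), y_0) \le r$. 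Combined with the hypothesis $E^\beta(\ffi|_A) = E^\beta(\psi|_A)$, the triangle inequality then forces $d(x_0, y_0) \le 2r$, contradicting the construction.

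The heart of the matter is the construction of $A$, $x_0$, $y_0$. Strong measurability of $\ffi$ and $\psi$ furnishes a $\bP$-null set $N$ off which both maps take values in a common separable subset of $M$; choose a countable dense sequence $\{z_k\}_{k \in \bN}$ in that subset. For each triple $(k, l, m) \in \bN^3$ with $d(z_k, z_l) > 4/m$, set
$$
A_{k,l,m} := \{\omega \in \Omega \setminus N : d(\ffi(\omega), z_k) < 1/m,\ d(\psi(\omega), z_l) < 1/m\}.
$$
A short triangle-inequality argument shows that these countably many measurable sets cover $\{\omega \in \Omega \setminus N : \ffi(\omega) \neq \psi(\omega)\}$: given such $\omega$, pick $m$ with $6/m < d(\ffi(\omega), \psi(\omega))$ and then $z_k, z_l$ within $1/m$ of $\ffi(\omega)$ and $\psi(\omega)$ respectively, so that $d(z_k, z_l) > 4/m$ and $\omega \in A_{k,l,m}$. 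Since the covered set has positive $\bP$-measure, at least one $A_{k,l,m}$ does too, and setting $A := A_{k,l,m}$, $x_0 := z_k$, $y_0 := z_l$, $r := 1/m$ gives $d(x_0, y_0) > 4r > 2r$ as required.

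The main obstacle is precisely this selection step: one must trap $\ffi$ and $\psi$ simultaneously in disjoint small balls on a common positive-measure set, and without the separability afforded by strong measurability there would be no countable family from which to extract such a set. The argument uses no property of $\beta$ beyond $p$-contractivity and $\beta(\delta_x) = x$, and it works uniformly for $1 \le p \le \infty$, provided one reads $\bd_\infty$ and essential supremum in place of the $L^p$ expressions when $p = \infty$.
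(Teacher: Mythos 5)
Your proposal is correct and is essentially the paper's own argument: both proofs use the essential separability of $\ffi$ and $\psi$ to find a positive-measure set on which the two functions are trapped in small balls around points $x_0,y_0$ whose separation exceeds the sum of the radii, and then apply Proposition~\ref{P-2.2} to the constant functions $\1_A x_0$, $\1_A y_0$ on the reduced space to contradict $E^\beta(\ffi|_A)=E^\beta(\psi|_A)$. The only difference is cosmetic: the paper first fixes $\delta>0$ with $\bP\{d(\ffi,\psi)>\delta\}>0$ and covers by doubly indexed sets, whereas you cover the difference set directly by triply indexed sets with the separation condition built in.
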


\begin{proof}
Assume that $\ffi\ne\psi$; then there exists a $\delta>0$ such that
$$
\bP\{\omega\in\Omega:d(\ffi(\omega),\psi(\omega))>\delta\}>0.
$$
One can choose a sequence $\{x_n\}_{n=1}^\infty$ in $M$ such that
$\ffi(\omega),\psi(\omega)\in\overline{\{x_n\}}$ a.e. For
$m,n=1,2,\dots,$ let
$$
A_{m,n}:=\{\omega\in\Omega:d(\ffi(\omega),\psi(\omega))>\delta,\,
d(x_m,\ffi(\omega))\le\delta/4,\,d(x_n,\psi(\omega))\le\delta/4\}.
$$
Since $\bP\bigl(\bigcup_{m,n=1}^\infty A_{m,n}\bigr)
=\bP\{d(\ffi(\omega),\psi(\omega))>\delta\}>0$, one can choose $m,n$ such that
$\bP(A_{m,n})>0$. For $\omega\in A_{m,n}$ we have
$$
\delta<d(\ffi(\omega),\psi(\omega))
\le d(\ffi(\omega),x_m)+d(x_m,x_n)+d(x_n,\psi(\omega))
<d(x_m,x_n)+\delta/2,
$$
so that $d(x_m,x_n)>\delta/2$. Let $\ffi_0:=\1_\Omega x_m$ and $\psi_0:=\1_\Omega x_n$ be
constant functions. For $A=A_{m,n}$ we have $E^\beta(\ffi_0|_A)=x_m$ and
$E^\beta(\psi_0|_A)=x_n$. Moreover, by Proposition 2.2\,(1),
\begin{align*}
d(E^\beta(\ffi|_A),x_m)&=d(E^\beta(\ffi|_A),E^\beta(\ffi_0|_A))
\le\bd_p(\ffi|_A,\ffi_0|_A)\le\delta/4, \\
d(E^\beta(\psi|_A),x_n)&=d(E^\beta(\psi|_A),E^\beta(\psi_0|_A))
\le\bd_p(\psi|_A,\psi_0|_A)\le\delta/4.
\end{align*}
Since $E^\beta(\ffi|_A)=E^\beta(\psi|_A)$ by assumption, we have $d(x_m,x_n)\le\delta/2$, a
contradiction.
\end{proof}

Recall that $(\Omega,\cA)$ is a \emph{standard Borel space} if it is isomorphic to
$(X,\cB(X))$ of a Polish space $X$ and its Borel $\sigma$-algebra $\cB(X)$. In the rest of
this section, unless otherwise stated, we assume that $(\Omega,\cA,\bP)$ is a probability
space over a standard Borel space $(\Omega,\cA)$ and $\cB$ is a sub-$\sigma$-algebra of $\cA$.
To introduce the notion of the $\beta$-conditional expectation with respect to $\cB$, we
utilize the disintegration theorem, which we state as a lemma for convenience. For details see
\cite[Theorem 5.8]{Fu} (where a probability measure space on a standard Borel space is called
a regular measure space).

\begin{remark}\label{R-2.4}\rm
It is known \cite[Corollary 10.4.6]{Bo} that if $X$ is a \emph{Souslin space} (i.e., a
continuous image of a Polish space), then for any probability measure $\bP$ on $\cB(X)$ and
every sub-$\sigma$-algebra $\cB$ of $\cB(X)$ there exists a disintegration of $\bP$
with respect to $\cB$. Thus, the results of this paper when $(\Omega,\cA)$ is a standard
Borel space are also true with a bit weaker assumption that $(\Omega,\cA)$ is isomorphic to
$(X,\cB(X))$ of a Souslin space $X$.
\end{remark}

\begin{lemma}\label{L-2.5}
There exists a family $(\bP_\omega)_{\omega\in\Omega}$ of
probability measures on $(\Omega,\cA)$ such that for every
$A\in\cA$,
\begin{itemize}
\item[(i)]  $\omega\in\Omega\mapsto\bP_\omega(A)$ is
$\cB$-measurable, and
\item[(ii)] $\omega\mapsto\bP_\omega(A)$ is a conditional expectation
of $\1_A$ with respect to $\cB$.
\end{itemize}

Such a family $(\bP_\omega)_{\omega\in\Omega}$ is unique up to a
$\bP$-null set, and moreover it satisfies the following$:$
\begin{itemize}
\item[(iii)] for every $f\in L^1(\Omega;\bR)$, $f\in L^1(\Omega,\cA,\bP_\omega;\bR)$ for
$\bP$-a.e.\ $\omega\in\Omega$ and $\omega\mapsto\int_\Omega f(\tau)\,d\bP_\omega(\tau)$ is a
conditional expectation of $f$ with respect to $\cB$. In particular,
$$
\int_\Omega f\,d\bP=\int_\Omega\biggl[\int_\Omega f(\tau)\,d\bP_\omega(\tau)\biggr]d\bP(\omega).
$$
\end{itemize}
\end{lemma}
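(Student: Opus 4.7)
The plan is to carry out the classical construction of regular conditional probability, adapted to the standard Borel setting. First I would exploit the hypothesis that $(\Omega,\cA)$ is standard Borel to realize $\cA$ as generated by a countable algebra $\cR=\{R_n\}_{n\ge1}$ that is closed under finite unions, intersections, and complements and that separates points; this uses the isomorphism of $(\Omega,\cA)$ with a Polish space together with its Borel $\sigma$-algebra.

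Next, for each $R\in\cR$ I would pick an arbitrary version $q(\cdot,R)$ of the (real-valued) conditional expectation $\bE[\1_R\mid\cB]$. Because $\cR$ is countable, only countably many of the natural identities $0\le q(\cdot,R)\le1$, $q(\cdot,\Omega)=1$, $q(\cdot,R\cup R')=q(\cdot,R)+q(\cdot,R')$ whenever $R,R'\in\cR$ are disjoint, etc., need to hold, and each of them holds outside a $\bP$-null set. Removing the countable union $N_0$ of these null sets leaves a set on which $q(\omega,\cdot)$ is a finitely additive, $[0,1]$-valued, normalized set function on $\cR$. The main obstacle is to upgrade finite additivity to countable additivity almost surely, because one cannot use countably many approximation steps once one quantifies over all decreasing sequences in $\cR$ shrinking to $\emptyset$. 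I would overcome this by invoking the regularity of $\bP$ on the Polish realization of $\Omega$: there is an increasing sequence of compact sets $K_j$ with $\bP(K_j)\uparrow1$, hence $q(\omega,K_j)\to1$ a.s., and for any $R_n\supset R_{n+1}\supset\cdots$ with empty intersection, $R_n\cap K_j=\emptyset$ eventually, so countable additivity on $\cR$ holds off a further null set $N_1$. Setting $\bP_\omega$ on $\cR$ to be $q(\omega,\cdot)$ for $\omega\notin N_0\cup N_1$ and any fixed probability measure otherwise, Carathéodory's extension produces a genuine probability measure on $\cA$.

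To complete (i) and (ii), I would show that for a general $A\in\cA$ the function $\omega\mapsto\bP_\omega(A)$ is $\cB$-measurable and gives $\bE[\1_A\mid\cB]$. Both properties hold on $\cR$ by construction; by a monotone-class/Dynkin-system argument, the class of $A\in\cA$ for which both hold is a monotone class containing the algebra $\cR$, hence equals $\cA$. For uniqueness, two families $(\bP_\omega)$ and $(\bP'_\omega)$ satisfy $\bP_\omega(R)=\bP'_\omega(R)$ outside a $\bP$-null set for every $R\in\cR$; since $\cR$ is countable and generates $\cA$, the two measures coincide on all of $\cA$ outside a single $\bP$-null set.

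Finally, for (iii) I would use the standard three-step approximation. The identity $\int f\,d\bP_\omega=\bE[f\mid\cB](\omega)$ a.s. holds for $f=\1_A$ by (ii), extends to simple functions by linearity, then to nonnegative $f\in L^1$ by monotone convergence applied to a truncating sequence of simple functions (which guarantees $f\in L^1(\bP_\omega)$ for $\bP$-a.e.\ $\omega$ since $\int\bigl[\int f\,d\bP_\omega\bigr]d\bP(\omega)=\int f\,d\bP<\infty$), and to general $f\in L^1(\Omega;\bR)$ by splitting $f=f^+-f^-$. The displayed tower identity is the special case obtained by integrating out $\omega$.
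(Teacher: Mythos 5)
The paper does not actually prove this lemma: it is quoted as the disintegration theorem with a pointer to \cite[Theorem 5.8]{Fu}, so your proposal has to be judged as a self-contained reconstruction of that classical result. Your overall architecture (countable generating algebra $\mathcal{R}$, a.s.\ finite additivity off a null set, Carath\'eodory extension, Dynkin/monotone-class argument for (i)--(ii), uniqueness from the countable generator, three-step approximation for (iii)) is the standard one, and those steps are sound as written.

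The genuine gap is exactly at the step you yourself flag as ``the main obstacle'': countable additivity. Your argument there fails for two reasons. First, the compact sets $K_j$ produced by regularity of $\bP$ need not belong to the countable algebra $\mathcal{R}$, so $q(\omega,K_j)$ is not even defined; you could adjoin countably many compacts to $\mathcal{R}$, but the next point still kills the argument. Second, the implication ``$R_n\downarrow\emptyset$ implies $R_n\cap K_j=\emptyset$ eventually'' is false for general algebra elements: in $\Omega=[0,1]$ take $R_n=(0,1/n)$ and $K_j=[0,1]$; the $R_n$ decrease to $\emptyset$ yet $R_n\cap K_j\ne\emptyset$ for every $n$. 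The finite-intersection-property argument needs the sets meeting $K_j$ to be closed or compact, which elements of $\mathcal{R}$ are not. The standard repair is the compact-class (Marczewski) argument: for each $R\in\mathcal{R}$ and $k\in\bN$ choose a compact $C_{R,k}\subset R$ with $\bP(R\setminus C_{R,k})\le4^{-k}$, keep everything inside a countable algebra, and use Markov's inequality plus Borel--Cantelli to arrange, off a single null set, $q(\omega,R\setminus C_{R,k})\le2^{-k}$ for all large $k$; then for $R_n\downarrow\emptyset$ the compact sets $D_N=\bigcap_{n\le N}C_{R_n,k_n}$ are empty for some finite $N$, which bounds $q(\omega,R_N)$ by the accumulated errors. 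Alternatively, and this is what most texts do, one avoids Carath\'eodory altogether: transfer to $[0,1]$ by a Borel isomorphism and build $\bP_\omega$ from an a.s.\ monotone, right-continuous conditional distribution function on the rationals, or embed $\Omega$ in a compact metric space and apply the Riesz representation theorem to the a.s.\ positive linear functional $f_n\mapsto E[f_n\,|\,\cB](\omega)$ defined on a countable dense family of continuous functions. With any of these repairs, the remainder of your proof (measurability and (ii) via the $\pi$-$\lambda$ theorem, uniqueness, and the approximation argument for (iii)) goes through exactly as you wrote it.
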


The family $(\bP_\omega)_{\omega\in\Omega}$ given in the above lemma
is called a {\it disintegration} of $\bP$ with respect to $\cB$. The
next lemma is easily seen from the primary property (ii) of the
above lemma, while we supply the proof for completeness.

\begin{lemma}\label{L-2.6}
Let $(\bP_\omega)_{\omega\in\Omega}$ be a disintegration of $\bP$
with respect to $\cB$. For every $\psi\in
L^1(\Omega,\cB,\bP;M)$, there is a $\bP$-null set $N\in\cB$ such
that for every $\omega\in\Omega\setminus N$, $\psi(\tau)$ is
constant for $\bP_\omega$-a.e.\ $\tau\in\Omega$.
\end{lemma}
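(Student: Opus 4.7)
The plan is to prove the claim first for $\cB$-measurable simple functions and then extend by approximation. The key fact I will use is that, by items (i)--(ii) of Lemma \ref{L-2.5}, for every $A \in \cB$ the $\cB$-measurable function $\omega \mapsto \bP_\omega(A)$ is a conditional expectation of $\1_A$ given $\cB$, which must equal $\1_A$ itself a.e. Thus for every $A \in \cB$ the set $\{\omega : \bP_\omega(A) \ne \1_A(\omega)\}$ is a $\cB$-measurable $\bP$-null set.

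For a $\cB$-measurable simple function $\psi = \sum_{j=1}^K \1_{B_j}\,x_j$ with disjoint $B_j \in \cB$, applying the observation above to each $B_j$ and taking the (finite) union yields a $\cB$-measurable $\bP$-null set $N$ outside of which $\bP_\omega(B_j) = \1_{B_j}(\omega)$ for every $j$. For $\omega \in B_{j_0} \setminus N$ this forces $\bP_\omega(B_{j_0}) = 1$, so $\bP_\omega$-a.e.\ $\tau$ lies in $B_{j_0}$, and hence $\psi(\tau) = x_{j_0} = \psi(\omega)$.

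For general $\psi \in L^1(\Omega,\cB,\bP;M)$, by Lemma \ref{L-1.1} I choose $\cB$-measurable simple functions $\psi_n$ with $\bd_1(\psi_n,\psi) \to 0$ and, after passing to a subsequence, with $\psi_n \to \psi$ $\bP$-a.e. The exceptional set $C := \{\omega : \psi_n(\omega) \not\to \psi(\omega)\}$ lies in $\cB$ since $\psi$ and the $\psi_n$ are $\cB$-measurable. The simple-function case supplies a $\cB$-measurable $\bP$-null set $N_n$ such that for $\omega \notin N_n$ one has $\psi_n(\tau) = \psi_n(\omega)$ for $\bP_\omega$-a.e.\ $\tau$, and the opening observation applied to $A = C$ supplies a further $\cB$-measurable $\bP$-null set $N''$ off of which $\bP_\omega(C) = \1_C(\omega)$. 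Put
\[
N := C \cup N'' \cup \bigcup_{n=1}^\infty N_n \in \cB,
\]
which is still $\bP$-null. For $\omega \in \Omega \setminus N$ we have $\bP_\omega(C) = 0$, the countable intersection of the $\bP_\omega$-full sets $\{\tau : \psi_n(\tau) = \psi_n(\omega)\}$ remains $\bP_\omega$-full, and $\psi_n(\omega) \to \psi(\omega)$. For $\bP_\omega$-a.e.\ $\tau$ (lying outside $C$ and inside this intersection) we thus have $\psi_n(\tau) = \psi_n(\omega) \to \psi(\omega)$ and simultaneously $\psi_n(\tau) \to \psi(\tau)$, giving $\psi(\tau) = \psi(\omega)$.

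The main subtlety I anticipate is the bookkeeping needed to keep every exceptional set inside the sub-$\sigma$-algebra $\cB$ rather than merely $\cA$; this is precisely what forces the use of the measurability clause (i) of Lemma \ref{L-2.5} and the choice of approximating simple functions that are $\cB$-measurable (not just $\cA$-measurable), so that both $C$ and the $N_n$ can be taken in $\cB$.
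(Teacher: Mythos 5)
Your proof is correct, and it takes a genuinely different route from the paper's. The paper argues directly with a general $\psi\in L^1(\Omega,\cB,\bP;M)$: it reduces the claim to showing that $\psi_*\bP_\omega$ is singly supported, characterizes this via a countable family of pairs of \emph{disjoint} open balls $(U_n,V_n)$ centered at a countable dense subset of the essential range, and then applies Lemma \ref{L-2.5}\,(ii) to the sets $\psi^{-1}(U_n),\psi^{-1}(V_n)\in\cB$ to get $\bP_\omega(\psi^{-1}(U_n))\cdot\bP_\omega(\psi^{-1}(V_n))=\1_{\psi^{-1}(U_n)}(\omega)\,\1_{\psi^{-1}(V_n)}(\omega)=0$ a.e., the product vanishing because the balls are disjoint. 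You instead use the same key ingredient --- that $\bP_\omega(B)=\1_B(\omega)$ a.e.\ for $B\in\cB$ --- but organize the proof as simple-function case plus a.e.\ approximation: for a $\cB$-simple $\psi$ the identity $\bP_\omega(B_{j_0})=1$ pins down $\bP_\omega$ on the partition, and the general case follows by choosing $\cB$-simple $\psi_n\to\psi$ a.e.\ and controlling the exceptional convergence set $C\in\cB$ with the same identity. Both arguments turn on keeping the exceptional sets countable in number and inside $\cB$, which you correctly flag as the main subtlety. A small bonus of your version is that it identifies the constant explicitly as $\psi(\omega)$ (i.e., $\psi_*\bP_\omega=\delta_{\psi(\omega)}$ a.e.), which is slightly more informative than the bare statement and is in fact the form implicitly used later (e.g., in the proof of Theorem \ref{T-2.13}); the paper's one-shot topological argument, on the other hand, avoids approximation altogether. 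One cosmetic remark: since $\psi$ is strongly $\cB$-measurable, the definition already provides $\cB$-simple functions converging to $\psi$ a.e., so your detour through $\bd_1$-density and subsequence extraction, while perfectly valid, is not needed.
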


\begin{proof}
Note that $\psi(\tau)$ is constant for $\bP_\omega$-a.e.\ if and
only if $\psi_*\bP_\omega$ is singly supported. Choose a countable
set $\{x_i\}_{i=1}^\infty$ in $M$ such that
$\psi(\omega)\in\overline{\{x_i\}}$ a.e. For $x\in M$ and $k\in\bN$,
set $U_{1/k}(x):=\{y\in M:d(y,x)<1/k\}$, the open ball of center $x$
and radius $1/k$. Let $\{(U_n,V_n)\}_{n=1}^\infty$ be an enumeration
of all pairs $(U_{1/k}(x_i),U_{1/k}(x_j))$ such that
$U_{1/k}(x_i)\cap U_{1/k}(x_j)=\emptyset$ with $i,j,k\in\bN$. Then
it is easy to see that $\psi_*\bP_\omega$ is singly supported if and
only if $(\psi_*\bP_\omega)(U_n)\cdot(\psi_*\bP_\omega)(V_n)=0$ for
all $n$, that is,
$\bP_\omega(\psi^{-1}(U_n))\cdot\bP_\omega(\psi^{-1}(V_n))=0$ for
all $n$. Since $\psi^{-1}(U_n),\psi^{-1}(V_n)\in\cB$, it follows
from Lemma \ref{L-2.5}\,(ii) that
$$
\bP_\omega(\psi^{-1}(U_n))=\1_{\psi^{-1}(U_n)}(\omega),\quad
\bP_\omega(\psi^{-1}(V_n))=\1_{\psi^{-1}(V_n)}(\omega)\ \ \mbox{a.e.}
$$
so that $\bP_\omega(\psi^{-1}(U_n))\cdot\bP_\omega(\psi^{-1}(V_n))=0$ a.e. Hence there is a
$\bP$-null set $N\in\cB$ such that for every $\omega\in\Omega\setminus N$ we have
$\bP_\omega(\psi^{-1}(U_n))\cdot\bP_\omega(\psi^{-1}(V_n))=0$ for all $n$, so $\psi(\tau)$ is
constant for $\bP_\omega$-a.e.
\end{proof}

\begin{lemma}\label{L-2.7}
Let $(\bP_\omega)_{\omega\in\Omega}$ be a disintegration of $\bP$ with respect to $\cB$. Let
$1\le p<\infty$.
\begin{itemize}
\item[(1)] If $\ffi\in L^p(\Omega;M)$, then there is a $\bP$-null $N\in\cB$ such that
$\ffi\in L^p(\Omega,\cA,\bP_\omega;M)$ and $\ffi_*\bP_\omega\in\cP^p(M)$ for all
$\omega\in\Omega\setminus N$.
\item[(2)] If $\ffi,\psi\in L^p(\Omega;M)$, then there is a $\bP$-null $N\in\cB$ such that
\begin{align}\label{F-2.2}
d(\beta(\ffi_*\bP_\omega),\beta(\psi_*\bP_\omega))
\le\biggl[\int_\Omega d^p(\ffi(\tau),\psi(\tau))\,d\bP_\omega(\tau)\biggr]^{1/p},
\qquad\omega\in\Omega\setminus N.
\end{align}
\end{itemize}
\end{lemma}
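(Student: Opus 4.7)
The plan is to reduce both assertions to Lemma~\ref{L-1.2} applied to the conditioned probability space $(\Omega,\cA,\bP_\omega)$, once we check that all relevant measurability and integrability properties transfer from $\bP$ to $\bP_\omega$ for $\bP$-a.e.\ $\omega$, with the exceptional set lying in $\cB$.

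For (1), first I would verify that $\ffi$ remains strongly measurable on $(\Omega,\cA,\bP_\omega)$ for $\bP$-a.e.\ $\omega$. Fix a sequence $\{\ffi_n\}$ of $M$-valued simple functions with $d(\ffi_n,\ffi)\to 0$ on a set $E\in\cA$ of full $\bP$-measure; by Lemma~\ref{L-2.5}(i)--(ii), $\omega\mapsto\bP_\omega(E^c)$ is a $\cB$-measurable version of $\bE[\1_{E^c}\mid\cB]$, hence vanishes off a $\cB$-measurable $\bP$-null set $N_1$. For $\omega\notin N_1$, $\ffi_n\to\ffi$ holds $\bP_\omega$-a.e., so $\ffi$ is strongly measurable on $(\Omega,\cA,\bP_\omega)$. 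Next, for a fixed $x\in M$ the scalar function $\tau\mapsto d^p(x,\ffi(\tau))$ lies in $L^1(\Omega,\cA,\bP;\bR)$, and by Lemma~\ref{L-2.5}(iii) the $\cB$-measurable map $\omega\mapsto\int_\Omega d^p(x,\ffi(\tau))\,d\bP_\omega(\tau)$ is $\bP$-a.e.\ finite, failing only on a $\cB$-measurable $\bP$-null set $N_2$. Taking $N:=N_1\cup N_2\in\cB$, we get $\ffi\in L^p(\Omega,\cA,\bP_\omega;M)$ for all $\omega\notin N$, whence Lemma~\ref{L-1.2}(1) applied to $(\Omega,\cA,\bP_\omega)$ yields $\ffi_*\bP_\omega\in\cP^p(M)$.

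For (2), apply (1) to both $\ffi$ and $\psi$, and take the union of the corresponding exceptional sets to obtain a single $\bP$-null $N\in\cB$ off which both $\ffi,\psi\in L^p(\Omega,\cA,\bP_\omega;M)$. For $\omega\notin N$, the $p$-contractivity \eqref{F-2.1} of $\beta$ combined with Lemma~\ref{L-1.2}(2) applied to $(\Omega,\cA,\bP_\omega)$ gives
$$
d(\beta(\ffi_*\bP_\omega),\beta(\psi_*\bP_\omega))\le d_p^W(\ffi_*\bP_\omega,\psi_*\bP_\omega)\le\biggl[\int_\Omega d^p(\ffi(\tau),\psi(\tau))\,d\bP_\omega(\tau)\biggr]^{1/p},
$$
which is precisely \eqref{F-2.2}.

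The main subtlety is ensuring the exceptional null set lies in $\cB$ rather than merely in $\cA$; this is handled throughout by the $\cB$-measurability clauses (i) and (iii) of Lemma~\ref{L-2.5}, which guarantee that the sets where the disintegrated measures or integrals misbehave are themselves $\cB$-measurable. Apart from this bookkeeping, the argument is a direct conditional analogue of Lemma~\ref{L-1.2}.
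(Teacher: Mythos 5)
Your proof is correct and follows essentially the same route as the paper's: Lemma~\ref{L-2.5}\,(iii) gives finiteness of $\int_\Omega d^p(x,\ffi(\tau))\,d\bP_\omega(\tau)$ off a $\bP$-null set in $\cB$, and then Lemma~\ref{L-1.2} applied to $(\Omega,\cA,\bP_\omega)$ yields both assertions, with (2) obtained by taking the union of the exceptional sets for $\ffi$ and $\psi$ and invoking \eqref{F-2.1}. Your explicit verification that $\ffi$ remains strongly measurable under $\bP_\omega$ (via $\bP_\omega(E^c)=0$ off a $\cB$-measurable null set) is a point the paper leaves implicit, so it is a welcome refinement rather than a deviation.
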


\begin{proof}
(1)\enspace Let $\ffi\in L^p(\Omega;M)$ and $x\in M$. Since
$\omega\mapsto d^p(x,\ffi(\omega))$ is in
$L^1(\Omega;{\Bbb R})$, it follows from Lemma \ref{L-2.5}\,(iii) that
$$
\int_\Omega\biggl[\int_\Omega d^p(x,\ffi(\tau))\,d\bP_\omega(\tau)\biggr]d\bP(\omega)
=\int_\Omega d^p(x,\ffi(\omega))\,d\bP(\omega)<\infty.
$$
Hence there is a $\bP$-null $N\in\cB$ such that for every $\omega\in\Omega\setminus N$ we have
$$
\int_\Omega d^p(x,\ffi(\tau))\,d\bP_\omega(\tau)<\infty,\quad\mbox{i.e.},\quad
\ffi\in L^p(\Omega,\cA,\bP_\omega;M)
$$
so that $\ffi_*\bP_\omega\in\cP^p(M)$ by Lemma \ref{L-1.2}\,(1).

(2)\enspace
Let $\ffi,\psi\in L^p(\Omega;M)$. By (1) there is a $\bP$-null $N\in\cB$ such that
$\ffi_*\bP_\omega,\psi_*\bP_\omega\in\cP^p(M)$ for all $\omega\in\Omega\setminus N$. For such
$\omega$, by (2.1) and Lemma \ref{L-1.2}\,(2) (applied to $\bP_\omega$
in place of $\bP$) we have
$$
d(\beta(\ffi_*\bP_\omega),\beta(\psi_*\bP_\omega))
\le d_p^W(\ffi_*\bP_\omega,\psi_*\bP_\omega)
\le\biggl[\int_\Omega d^p(\ffi(\tau),\psi(\tau))\,d\bP_\omega(\tau)\biggr]^{1/p}.
$$
\end{proof}

Now, assume that $1\le p<\infty$ and $\beta:\cP^p(M)\to M$ is a $p$-contractive barycentric map.

\begin{definition}\label{D-2.8}\rm
By using the disintegration $(\bP_\omega)_{\omega\in\Omega}$ of $\bP$ with respect to $\cB$,
for each $\ffi\in L^p(\Omega,\cA,\bP;M)$, define the {\it $\beta$-conditional expectation} of
$\ffi$ with respect to $\cB$ by
$$
E_\cB^\beta(\ffi)(\omega):=\beta(\ffi_*\bP_\omega),\qquad\omega\in\Omega.
$$
\end{definition}

The above definition makes sense by Lemma \ref{L-2.7}\,(1) but the
$\cB$-strong measurability of $E_\cB^\beta(\ffi)$ is proved in (1)
of the next theorem. This implies that the left-hand side of
\eqref{F-2.2} is a $\cB$-measurable function of $\omega$, while the
measurability of the right-hand side is seen from Lemma
\ref{L-2.5}\,(iii). The following shows in
particular   that the conditional expectation
$E_\cB^\beta:L^p(\Omega,\cA,\bP;M)\to L^p(\Omega,\cB,\bP;M)$ is
well defined and is a contractive retraction.

\begin{thm}\label{T-2.9}
Let $\ffi,\psi\in L^p(\Omega;M)$.
\begin{itemize}
\item[(1)] $E_\cB^\beta(\ffi)\in L^p(\Omega,\cB,\bP;M)$.
\item[(2)] $\bd_p(E_\cB^\beta(\ffi),E_\cB^\beta(\psi))\le\bd_p(\ffi,\psi)$.
\item[(3)] $\ffi\in L^p(\Omega,\cB,\bP;M)$ if and only if $E_\cB^\beta(\ffi)=\ffi$. Hence
$E_\cB^\beta(E_\cB^\beta(\ffi))=E_\cB^\beta(\ffi)$.
\item[(4)] When $\cB=\{\emptyset,\Omega\}$, $E_\cB^\beta(\ffi)=E^\beta(\ffi)$.
\item[(5)] Assume that $M$ is equipped with a closed partial order and $\beta$ is monotone.
If $\ffi\le\psi$, then $E_\cB^\beta(\ffi)\le E_\cB^\beta(\psi)$.
\end{itemize}
\end{thm}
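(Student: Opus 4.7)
My plan is to prove the five items in an order that lets each feed the next, starting with a reduction to simple functions and using Lemma~\ref{L-2.7} repeatedly as the workhorse.

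\medskip

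\textbf{Parts (1)--(2).} For part~(1), the $L^p$-integrability of $E_\cB^\beta(\ffi)$ falls out of contractivity: fix any $x_0 \in M$; applying \eqref{F-2.1} with $\nu = \delta_{x_0}$ gives
$$
d(E_\cB^\beta(\ffi)(\omega),x_0)
= d(\beta(\ffi_*\bP_\omega),\beta(\delta_{x_0}))
\le \biggl[\int_\Omega d^p(\ffi(\tau),x_0)\,d\bP_\omega(\tau)\biggr]^{1/p}
$$
for $\bP$-a.e.\ $\omega$, and integration against $\bP$ via Lemma~\ref{L-2.5}(iii) bounds $\bd_p(E_\cB^\beta(\ffi),\1_\Omega x_0)$ by $\bd_p(\ffi,\1_\Omega x_0)<\infty$. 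For part~(2), integrating the inequality of Lemma~\ref{L-2.7}(2) in $\omega$ and appealing again to Lemma~\ref{L-2.5}(iii) yields $\bd_p(E_\cB^\beta(\ffi),E_\cB^\beta(\psi))\le\bd_p(\ffi,\psi)$ directly.

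\medskip

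\textbf{$\cB$-strong measurability, the main obstacle of (1).} I expect this to be the delicate step. First I would do it for a simple function $\ffi=\sum_{j=1}^K\1_{A_j}x_j$ with $A_j\in\cA$: here $\ffi_*\bP_\omega=\sum_j \bP_\omega(A_j)\delta_{x_j}$, the weights $\omega\mapsto\bP_\omega(A_j)$ are $\cB$-measurable by Lemma~\ref{L-2.5}(i), and Lemma~\ref{L-1.3} shows that the map $(p_1,\dots,p_K)\mapsto\beta(\sum_j p_j\delta_{x_j})$ is continuous on the simplex, so $E_\cB^\beta(\ffi)$ is the composition of a $\cB$-measurable map with a continuous one into the separable space $\overline{\{x_1,\dots,x_K\}}$, hence $\cB$-strongly measurable. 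For general $\ffi\in L^p(\Omega;M)$, approximate by simple $\ffi_n\to\ffi$ with $\bd_p(\ffi_n,\ffi)\to 0$ (Lemma~\ref{L-1.1}); Lemma~\ref{L-2.7}(2) gives
$$
d(E_\cB^\beta(\ffi_n)(\omega),E_\cB^\beta(\ffi)(\omega))^p
\le \int_\Omega d^p(\ffi_n(\tau),\ffi(\tau))\,d\bP_\omega(\tau)
$$
a.e., and Lemma~\ref{L-2.5}(iii) shows the right-hand side tends to $0$ in $L^1(\bP)$; passing to a subsequence, $E_\cB^\beta(\ffi_{n_k})\to E_\cB^\beta(\ffi)$ $\bP$-a.e., so $E_\cB^\beta(\ffi)$ is $\cB$-strongly measurable as an a.e.\ limit of $\cB$-strongly measurable simple functions.

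\medskip

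\textbf{Parts (3)--(5).} For part~(3), if $\ffi\in L^p(\Omega,\cB,\bP;M)$ then Lemma~\ref{L-2.6} shows that, outside a $\bP$-null set $N\in\cB$, $\ffi$ is $\bP_\omega$-a.s.\ constant; identifying that constant as $\ffi(\omega)$ using Lemma~\ref{L-2.5}(ii) applied to the sets $\ffi^{-1}(U_n)\in\cB$ from the proof of Lemma~\ref{L-2.6}, one gets $\ffi_*\bP_\omega=\delta_{\ffi(\omega)}$ a.e., hence $E_\cB^\beta(\ffi)(\omega)=\beta(\delta_{\ffi(\omega)})=\ffi(\omega)$. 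The converse is immediate from part~(1), and the idempotence $E_\cB^\beta\circ E_\cB^\beta=E_\cB^\beta$ then follows. Part~(4) is direct: the constant family $\bP_\omega\equiv\bP$ is a disintegration over the trivial $\sigma$-algebra, so $E_\cB^\beta(\ffi)(\omega)=\beta(\ffi_*\bP)=E^\beta(\ffi)$. Part~(5) uses Lemma~\ref{L-1.4} fibrewise: if $\ffi\le\psi$ off a $\bP$-null set $N$, then $\bP_\omega(N)=0$ for $\bP$-a.e.\ $\omega$ by Lemma~\ref{L-2.5}(ii), so $\ffi\le\psi$ holds $\bP_\omega$-a.s., giving $\ffi_*\bP_\omega\le\psi_*\bP_\omega$ by Lemma~\ref{L-1.4}, and monotonicity of $\beta$ yields $E_\cB^\beta(\ffi)(\omega)\le E_\cB^\beta(\psi)(\omega)$ a.e.
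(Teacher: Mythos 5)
Your proposal is correct and reaches all five items, but it departs from the paper's proof in two substantive ways, so a comparison is in order. For the $\cB$-strong measurability in (1), the paper approximates the weights $\bP_\omega(A_j)$ by $\cB$-simple functions taking values $l/k$ and exhibits $E_\cB^\beta(\ffi)$ explicitly as an a.e.\ limit of $\cB$-simple functions (for general $\ffi$ this needs a further layer of $\cB$-simple approximants and a Borel--Cantelli argument), thus staying entirely within the paper's definition of strong measurability; you instead compose the $\cB$-measurable simplex-valued map $\omega\mapsto(\bP_\omega(A_1),\dots,\bP_\omega(A_K))$ with the map $(p_1,\dots,p_K)\mapsto\beta\bigl(\sum_j p_j\delta_{x_j}\bigr)$, whose continuity follows from Lemma \ref{L-1.3} together with \eqref{F-2.1}, and then invoke the standard facts that Borel measurability plus essentially separable range implies strong measurability, and that an a.e.\ limit of strongly measurable functions is strongly measurable. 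That shortcut is legitimate, but note one slip: the image of the simplex is \emph{not} contained in $\overline{\{x_1,\dots,x_K\}}$ (a finite, hence closed, set which the barycenter generally misses); what you should say is that the image is compact, hence separable, which is all the argument needs. For part (3), the paper reduces to $\cB$-simple functions by approximation and then applies part (2), whereas you prove directly from Lemma \ref{L-2.6} and Lemma \ref{L-2.5}\,(ii) that $\ffi_*\bP_\omega=\delta_{\ffi(\omega)}$ a.e.; your route is more self-contained at the level of the disintegration and avoids the density argument, which is a genuine (if small) gain. Finally, a presentational point that the paper itself flags right after Definition \ref{D-2.8}: the integral bounds you state first under ``Parts (1)--(2)'' only make sense as integrals once the $\cB$-strong measurability of $E_\cB^\beta(\ffi)$ and $E_\cB^\beta(\psi)$ is known, so in a final write-up the measurability step must come before, not after, those estimates.
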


\begin{proof}
(1)\enspace First, assume that $\ffi$ is a simple function, i.e.,
$\ffi=\sum_{j=1}^K\1_{A_j}x_j$, where $\{A_1,\dots,A_K\}$ is a measurable
partition of $\Omega$. Since
$\ffi_*\bP_\omega=\sum_{j=1}^K\bP_\omega(A_j)\delta_{x_j}$, one has
\begin{align}\label{F-2.3}
E_\cB^\beta(\ffi)(\omega)=\beta\Biggl(\sum_{j=1}^K\bP_\omega(A_j)\delta_{x_j}\Biggr).
\end{align}
By Lemma \ref{L-2.5}\,(ii) one has $\sum_{j=1}^K\bP_\omega(A_j)=1$
for all $\omega\in\Omega\setminus N$ with a $\bP$-null set
$N\in\cB$. For each $k\in\bN$ and $\omega\in\Omega\setminus N$,
approximating $\bP_\omega(A_j)$ ($1\le j\le K$) with numbers $l/k$
($0\le l\le k$) one can choose sequences $\{\xi_{jk}\}_{k=1}^\infty$
($1\le j\le K$) of $\cB$-simple functions $\xi_{jk}:\Omega\to[0,1]$
such that $\sum_{j=1}^K\xi_{jk}(\omega)=1$ for all $\omega\in\Omega$
and $k\in\bN$, and
$\mathrm{ess\,sup}_{\omega\in\Omega}\,|\xi_{jk}(\omega)-\bP_\omega(A_j)|\le1/k$
for $1\le j\le K$. Then one has by \eqref{F-2.3}, \eqref{F-2.1} and
Lemma \ref{L-1.3},
\begin{align}
d\Biggl(\beta\Biggl(\sum_{j=1}^K\xi_{jk}(\omega)\delta_{x_j}\Biggr),
E_\cB^\beta(\ffi)(\omega)\Biggr) \nonumber &\le
d_p^W\Biggl(\sum_{j=1}^K\xi_{jk}(\omega)\delta_{x_j},
\sum_{j=1}^K\bP_\omega(A_j)\delta_{x_j}\Biggr) \nonumber\\
&\le\Delta\Biggl[\sum_{j=1}^K\big|\xi_{jk}(\omega)-\bP_\omega(A_j)\big|
\Biggr]^{1/p}\,\longrightarrow\,0\quad\mbox{a.e.} \label{F-2.4}
\end{align}
as $k\to\infty$, where $\Delta:=\diam\{x_1,\dots,x_K\}$. It is clear that
$\beta\bigl(\sum_{j=1}^K\xi_{jk}(\omega)\delta_{x_j}\bigr)$'s are
$\cB$-simple functions. Hence $E_\cB^\beta(\ffi)$ is $\cB$-strongly
measurable. Moreover, for each $k,l\in\bN$, by \eqref{F-2.1} and
Lemma \ref{L-1.3} again,
\begin{align*}
d\Biggl(\beta\Biggl(\sum_{j=1}^K\xi_{jk}(\omega)\delta_{x_j}\Biggr),
\beta\Biggl(\sum_{j=1}^K\xi_{jl}(\omega)\delta_{x_j}\Biggr)\Biggr)&\le
d_p^W\Biggl(\sum_{j=1}^K\xi_{jk}(\omega)\delta_{x_j},
\sum_{j=1}^K\xi_{jl}(\omega)\delta_{x_j}\Biggr) \\
&\le\Delta\Biggl[\sum_{j=1}^K|\xi_{jk}(\omega)-\xi_{jl}(\omega)|
\Biggr]^{1/p},
\end{align*}
so that for whichever $p\in[1,\infty)$,
\begin{align*}
\bd_p\Biggl(\beta\Biggl(\sum_{j=1}^K\xi_{jk}(\cdot)\delta_{x_j}\Biggr),
\beta\Biggl(\sum_{j=1}^K\xi_{jl}(\cdot)\delta_{x_j}\Biggr)\Biggr)
\le\Delta\Biggl[\sum_{j=1}^K
\mathop{\mathrm{ess\,sup}}_{\omega\in\Omega}|\xi_{jk}(\omega)-\xi_{jl}(\omega)|\Biggr]^{1/p}
\longrightarrow\,0
\end{align*}
as $k,l\to\infty$. Therefore,
$\beta\bigl(\sum_{j=1}^K\xi_{jk}(\cdot)\delta_{x_j}\bigr)$ converges
in $\bd_p$ as $k\to\infty$ to an element of $L^p(\Omega,\cB,\bP;M)$.
Since the limit must be $E_\cB^\beta(\ffi)$ due to \eqref{F-2.4}, it
follows that $E_\cB^\beta(\ffi)\in L^p(\Omega,\cB,\bP;M)$ when
$\ffi$ is a simple function.

Next, for general $\ffi\in L^p(\Omega;M)$ choose a
sequence $\{\ffi_k\}_{k=1}^\infty$ of simple functions in
$L^p(\Omega;M)$ such that $\bd_p(\ffi_k,\ffi)\to0$,
due to the denseness of $M$-valued simple functions.
Then $E_\cB^\beta(\ffi_k)\in L^p(\Omega,\cB,\bP;M)$ as proved above.
By Lemmas \ref{L-2.7}\,(2) and \ref{L-2.5}\,(iii),
\begin{align}
\bd_p^p(E_\cB^\beta(\ffi_k),E_\cB^\beta(\ffi_l))
&=\int_\Omega d^p(\beta((\ffi_k)_*\bP_\omega),\beta((\ffi_l)_*\bP_\omega))\,d\bP(\omega)
\nonumber\\
&\le\int_\Omega\biggl[\int_\Omega d^p(\ffi_k(\tau),\ffi_l(\tau))\,d\bP_\omega(\tau)\biggr]
d\bP(\omega) \nonumber\\
&=\bd_p^p(\ffi_k,\ffi_l)\ \longrightarrow\ 0\quad\mbox{as
$k,l\to\infty$}. \label{F-2.5}
\end{align}
Moreover, by Lemma \ref{L-2.7}\,(2) there is a $\bP$-null $N_0\in\cB$ such that
\begin{align*}
d(E_\cB^\beta(\ffi_k)(\omega),E_\cB^\beta(\ffi)(\omega))
&=d(\beta((\ffi_k)_*\bP_\omega),\beta(\ffi_*\bP_\omega)) \\
&\le\biggl[\int_\Omega d^p(\ffi_k(\tau),\ffi(\tau))\,d\bP_\omega(\tau)\biggr]^{1/p},
\qquad\omega\in\Omega\setminus N_0.
\end{align*}
Now, let $\zeta_k(\omega):=\int_\Omega
d^p(\ffi_k(\tau),\ffi(\tau))\,d\bP_\omega(\tau)$ for
$\omega\in\Omega$. Then using Lemma \ref{L-2.5}\,(iii)
to the function $d^p(\ffi_k(\omega),\ffi(\omega))$ we have
$\zeta_k\in L^1(\Omega,\cB,\bP;\bR)$ and
$$
\int_\Omega\zeta_k(\omega)\,d\bP(\omega)
=\bd_p^p(\ffi_k,\ffi)\ \longrightarrow\ 0 \quad\mbox{as
$k\to\infty$}.
$$
Hence, by choosing a subsequence of $\{\zeta_k\}$ we may assume that $\zeta_k(\omega)\to0$
a.e.\ (see \cite[p.\,93, Theorem D]{Hal}), so there is a $\bP$-null $N_1\in\cB$ such that
$\lim_{k\to\infty}\zeta_k(\omega)=0$ for all $\omega\in\Omega\setminus N_1$.
Furthermore, for every $k\in\bN$, since $E_\cB^\beta(\ffi_k)$ is $\cB$-strongly measurable,
one can choose a $\cB$-simple function $\psi_k$ and a $B_k\in\cB$ such that $\bP(B_k)<1/k^2$
and $d(E_\cB^\beta(\ffi_k),\psi_k(\omega))<1/k$ for all $\omega\in\Omega\setminus B_k$. Set
$N:=N_0\cup N_1\cup(\limsup_kB_k)\in\cB$. Then $\bP(N)=0$ by the Borel-Cantelli lemma, and for
every $\omega\in\Omega\setminus N$, we have $\omega\in\Omega\setminus N_0$,
$\omega\in\Omega\setminus N_1$ and $\omega\in\Omega\setminus B_k$ for all $k$ sufficiently
large, so that
\begin{align*}
d(E_\cB^\beta(\ffi)(\omega),\psi_k(\omega))
&\le d(E_\cB^\beta(\ffi)(\omega),E_\cB^\beta(\ffi_k)(\omega))
+d(E_\cB^\beta(\ffi_k)(\omega),\psi_k(\omega)) \\
&\le\zeta_k(\omega)^{1/p}+{1\over k}\ \longrightarrow\ 0\quad\mbox{as $k\to\infty$}.
\end{align*}
This implies that $E_\cB^\beta(\ffi)$ is $\cB$-strongly measurable
and $E_\cB^\beta(\ffi_k)(\omega)\to E_\cB^\beta(\ffi)(\omega)$ a.e.
From this and \eqref{F-2.5} we find that $E_\cB^\beta(\ffi)$ is the
$\bd_p$-limit of $E_\cB^\beta(\ffi_k)$ and hence (1) follows.

(2)\enspace
The proof is similar to that of the inequality in \eqref{F-2.5}.

(3)\enspace If $E_\cB^\beta(\ffi)=\ffi$, then $\ffi\in
L^p(\Omega,\cB,\bP;M)$ by (1). Conversely, assume that $\ffi\in
L^p(\Omega,\cB,\bP;M)$. By approximation, we may assume that $\ffi$
is a $\cB$-simple function, i.e., $\ffi=\sum_{j=1}^K\1_{B_j}x_j$
with a $\cB$-partition $\{B_1,\dots,B_n\}$ of $\Omega$, so
$E_\cB^\beta(\ffi)=\beta\bigl(\sum_{j=1}^K\bP_\omega(B_j)\delta_{x_j}\bigr)$.
Since $\bP_\omega(B_j)=\1_{B_j}(\omega)$ a.e.\ by Lemma \ref{L-2.5}\,(ii), we have
$$
E_\cB^\beta(\ffi)(\omega)=\sum_{j=1}^K\1_{B_j}(\omega)\beta(\delta_{x_j})
=\sum_{j=1}^K\1_{B_j}(\omega)x_j=\ffi(\omega)\ \ \mbox{a.e.}
$$

(4) is obvious and (5) follows from Lemma \ref{L-1.4}.
\end{proof}

\begin{remark}\label{R-2.10}\rm
The last paragraph of the above proof of (1) may be a bit
complicated. A simpler way to construct the map $\ffi\in
L^p(\Omega;M)\mapsto E_\cB^\beta(\ffi)\in L^p(\Omega,\cB,\bP;M)$ is
as follows: For a simple function $\ffi:\Omega\to M$,
$E_\cB^\beta(\ffi)\in L^p(\Omega,\cB,\bP;M)$ is well defined as
above. For every simple functions $\ffi,\psi$ we have
$\bd_p(E_\cB^\beta(\ffi),E_\cB^\beta(\psi))\le\bd_p(\ffi,\psi)$ as
in \eqref{F-2.5}. Hence the map $E_\cB^\beta$ on the simple
functions can uniquely extend to $E_\cB^\beta$ on $L^p(\Omega;M)$ by
continuity. However, this abstract definition does not imply the
$\cB$-strong measurability of
$\omega\mapsto\beta(\ffi_*\bP_\omega)$, so the expression
$E_\cB^\beta(\ffi)(\omega)=\beta(\ffi_*\bP_\omega)$ (Definition
\ref{D-2.8}) is not clear.
\end{remark}

\begin{remark}\label{R-2.11}\rm
Assume that $1\le p_0<\infty$ and $\beta:\cP^{p_0}(M)\to M$ is a
$p_0$-contractive barycentric map. Then in view of \eqref{F-1.1} and
\eqref{F-1.2} we note that for every $p\in[p_0,\infty]$,
$\beta|_{\cP^p(M)}:\cP^p(M)\to M$ is a $p$-contractive barycentric
map. It follows from this and \eqref{F-1.3} that Theorem \ref{T-2.9}
holds for every $p\in[p_0,\infty)$. Moreover, when $\ffi\in
L^\infty(\Omega;M)$ and $\ffi_0:=\1_\Omega x_0\in M$, one has
$$
\bd_p(E_\cB^\beta(\ffi),\ffi_0)=\bd_p(E_\cB^\beta(\ffi),E_\cB^\beta(\ffi_0))
\le\bd_p(\ffi,\ffi_0),\qquad p_0\le p<\infty,
$$
whose limit as $p\to\infty$ gives
$\bd_\infty(E_\cB^\beta(\ffi),\ffi_0)\le\bd_\infty(\ffi,\ffi_0)<\infty$ so that
$E_\cB^\beta(\ffi)\in L^\infty(\Omega,\cB,\bP;M)$. Also, for $\ffi,\psi\in L^\infty(\Omega;M)$,
$$
\bd_\infty(E_\cB^\beta(\ffi),E_\cB^\beta(\psi))
=\lim_{p\to\infty}\bd_p(E_\cB^\beta(\ffi),E_\cB^\beta(\psi))
\le\lim_{p\to\infty}\bd_p(\ffi,\psi)=\bd_\infty(\ffi,\psi).
$$
Therefore, Theorem \ref{T-2.9} holds for $p=\infty$ as well in this situation. However, it is
not clear whether Theorem \ref{T-2.9} holds for $p=\infty$ when an $\infty$-contractive
barycentric map $\beta:\cP^\infty(M)\to M$ is given. Note that the proof of the theorem
heavily relies on Lemma \ref{L-1.3} and the assumption $1\le p<\infty$ is essential for Lemma
\ref{L-1.3}. So, when $p=\infty$, it does not seem easy to prove that the function
$\omega\mapsto\beta(\ffi_*\bP_\omega)$ is $\cB$-strongly measurable.
\end{remark}

\begin{example}\label{E-2.12}\rm
An important property of the conventional conditional expectation is
the associativity $E_{\mathcal C}\circ E_\cB=E_{\mathcal
C}$ for sub-$\sigma$-algebras $\mathcal{C}\subset\cB\subset\cA$.
However, this fails to hold for the $\beta$-conditional expectation.
To give a counter-example, let $M=\mathbb{P}_n$ be the Cartan-Hadamard
manifold of $n\times n$ positive definite matrices equipped with
the trace metric $ds = \| A^{-1/2} dA\, A^{-1/2} \|_{2} =
\left[{\mathrm{tr}}(A^{-1} dA)^{2}\right]^{1/2}$, and $\beta=G$ be
the Cartan barycenter (or the Karcher mean):
$$
G(\mu)=\argmin_{Z\in {\Bbb P}_n}\int_{{\Bbb
P}_n}\bigl[d^2(Z,X)-d^2(Y,X)\bigr]\,d\mu(X).
$$
Let $\Omega=\{1,2,3\}$, $\cA=2^\Omega$, and
$\bP=(p_1,p_2,p_3)$. Let $\cB=\{\emptyset,\{1\},\{2,3\},\Omega\}$.
Let $\ffi=\sum_{j=1}^3\1_{\{j\}}A_j$ with $A_j\in\mathbb{P}_n$. Then
we have for $S\in\cA$,
$$
\bP_1(S)={\bP(S\cap\{1\})\over p_1},\qquad
\bP_2(S)=\bP_3(S)={\bP(S\cap\{2,3\})\over p_2+p_3}.
$$
Therefore,
$$
\ffi_*\bP_1=\delta_{A_1},\qquad
\ffi_*\bP_2=\ffi_*\bP_3={p_2\over p_2+p_3}\delta_{A_2}+{p_3\over p_2+p_3}\delta_{A_3},
$$
so that $E_\cB^G(\ffi)(1)=G(\delta_{A_1})=A_1$ and
\begin{align*}
&E_\cB^G(\ffi)(2)=E_\cB^G(\ffi)(3) =G\biggl({p_2\over
p_2+p_3}\delta_{A_2}+{p_3\over p_2+p_3}\delta_{A_3}\biggr)
=A_2\#_{p_3/(p_2+p_3)}A_3,
\end{align*}
where $t\mapsto
A\#_tB:=A^{1/2}(A^{-1/2}BA^{-1/2})^tA^{1/2}$ is  the unique (up to
parametrization) geodesic joining $A$ and $B$ (cf.\ \cite{Bh}).  Now we show that
$E^G\ne E^G\circ E_\cB^G$ (note that $E^G=E_{\mathcal C}^G$ with
${\mathcal C}=\{\emptyset,\Omega\}$ by Theorem \ref{T-2.9}\,(4)).
Assume on the contrary that $E^G=E^G\circ E_\cB^G$; then
$$
G(p_1\delta_{A_1}+p_2\delta_{A_2}+p_3\delta_{A_3})
=A_1\#_{p_2+p_3}(A_2\#_{p_3/(p_2+p_3)}A_3),
$$
holds for all $A_1,A_2,A_3\in\mathbb{P}_n$ and all probabilities
$(p_1,p_2,p_3)$. Then we must have
\begin{align*}
A_1\#_{p_2+p_3}(A_2\#_{p_3/(p_2+p_3)}A_3)
&=G(p_1\delta_{A_1}+p_2\delta_{A_2}+p_3\delta_{A_3}) \\
&=G(p_2\delta_{A_2}+p_1\delta_{A_{1}}+p_3\delta_{A_{3}})
=A_2\#_{p_1+p_3}(A_1\#_{p_3/(p_1+p_3)}A_3).
\end{align*}
In particular, when $p_1=p_2=p_3=1/3$ and $A_3=I$, the above becomes
$A_1\#_{2/3}A_2^{1/2}=A_2\#_{2/3}A_1^{1/2}$ or $A_1\#_{2/3}A_2^{1/2}=A_1^{1/2}\#_{1/3}A_2$.
Since this certainly fails to hold, we have a contradiction.

In view of Theorem \ref{T-2.13} below, Sturm's example in
\cite[Example 3.2]{St02} on the $3$-spider serves as another
counter-example to the associativity of the $\beta$-conditional
expectation.
\end{example}

From Example \ref{E-2.12} we find that the following characterization of $E_\cB^\beta(\ffi)$ of
$\ffi\in L^p(\Omega;M)$ like the conventional conditional expectation is not valid:
$$
\psi=E_\cB^\beta(\ffi)\ \iff\ \begin{cases}
\psi\in L^p(\Omega,\cB,\bP;M)\ \mbox{and} \\
E^\beta(\psi|_B)=E^\beta(\ffi|_B)\ \mbox{for all $B\in\cB$ with $\bP(B)>0$}.
\end{cases}
$$

 Finally, we specialize our conditional expectation
to the case of a \emph{global NPC space} (alternatively, \emph{CAT$(0)$} or
\emph{Hadamard space}). Let $(M,d)$ is a global NPC space. The
\emph{canonical barycentric map} $\lambda$ on $\cP^1(M)$ defined in
\cite{St} is
\begin{eqnarray}\label{E:least}
\lambda(\mu):=\argmin_{z\in
M}\int_M\bigl[d^2(z,x)-d^2(y,x)\bigr]\,d\mu(x)
\end{eqnarray}
for each $\mu\in\cP^1(M)$ independently of the choice of $y\in M$.
If $\mu\in\cP^2(M)$, then $\lambda(\mu)$ is more simply given by
$$
\lambda(\mu)=\argmin_{z\in M}\int_Md^2(z,x)\,d\mu(x).
$$

Assume that $(\Omega,\cA,\bP)$ be a \emph{general} probability space and $\cB$ is a
sub-$\sigma$-algebra of $\cA$. In \cite{St02} Sturm introduced, for each
$\ffi\in L^2(\Omega;M)$, the conditional expectation $\bE_\cB(\ffi)\in L^2(\Omega,\cB,\bP;M)$
of $\ffi$ with respect to $\cB$ as
$$
\bE_\cB(\ffi):=\argmin_{\psi\in L^2(\Omega,\cB,\bP;M)}\bd_2(\ffi,\psi).
$$
He then proved that for every $\ffi,\psi\in L^2(\Omega;M)$,
$$
d(\bE_\cB(\ffi)(\omega),\bE_\cB(\psi)(\omega))\le E_\cB[d(\ffi,\psi)](\omega)\ \ \mbox{a.e.},
$$
where $E_\cB[d(\ffi,\psi)]$ is the usual conditional expectation of the function
$d(\ffi(\omega),\psi(\omega))$ with respect to $\cB$. From this he showed that
$\bE_\cB$ extends continuously from $L^2(\Omega;M)$ to $L^1(\Omega;M)$ and that for every
$p\in[1,\infty]$, $\bE_\cB$ maps $L^p(\Omega;M)$ into $L^p(\Omega,\cB,\bP;M)$ and
\begin{align}\label{F-2.6}
\bd_p(\bE_\cB(\ffi),\bE_\cB(\psi))\le\bd_p(\ffi,\psi),\qquad\ffi,\psi\in L^p(\Omega;M).
\end{align}

Now, we assume that $(\Omega,\cA)$ is a standard Borel space. Our definition then provides
the conditional expectation $E_\cB^\lambda(\ffi)\in L^1(\Omega,\cB,\bP;M)$ for every
$\ffi\in L^1(\Omega;M)$, and by Remark \ref{R-2.11} for every $p\in[1,\infty]$, $E_\cB^\lambda$
maps $L^p(\Omega;M)$ into $L^p(\Omega,\cB,\bP;M)$ and
\begin{align}\label{F-2.7}
\bd_p(E_\cB^\lambda(\ffi),E_\cB^\lambda(\psi))\le\bd_p(\ffi,\psi),\qquad
\ffi,\psi\in L^p(\Omega;M).
\end{align}

Sturm's conditional expectation is restricted to a global NPC space $(M,d)$ while
$(\Omega,\cA,\bP)$ is general. On the other hand, our definition needs a restriction on
$(\Omega,\cA)$ to guarantee the existence of a disintegration, while it can be applied to
a general contractive barycentric map. The next theorem says that Sturm's conditional
expectation and ours are the same, in the situation where both can be defined.

\begin{thm}\label{T-2.13}
Assume that $(\Omega,\cA,\bP)$ is a standard Borel probability space. Let
$(M,d)$ be a global NPC space, and $\lambda$ be given as above. Then
for every $p\in[1,\infty]$ and every $\ffi\in L^p(\Omega;M)$,
$$
\bE_\cB(\ffi)=E_\cB^\lambda(\ffi).
$$
\end{thm}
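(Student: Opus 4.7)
The strategy is to first prove the equality for $p=2$ by verifying that $E_\cB^\lambda(\ffi)$ satisfies the variational characterization that Sturm uses to define $\bE_\cB(\ffi)$, and then to extend to all $p\in[1,\infty]$ by density together with the contractivity available for both operators.

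\textbf{The case $p=2$.} Fix $\ffi\in L^2(\Omega;M)$. By Sturm's definition, $\bE_\cB(\ffi)$ is the unique minimizer of $\eta\mapsto\bd_2(\ffi,\eta)$ on $L^2(\Omega,\cB,\bP;M)$, so it suffices to exhibit $E_\cB^\lambda(\ffi)$ as that minimizer. Let $\eta\in L^2(\Omega,\cB,\bP;M)$ be arbitrary. Applying Lemma \ref{L-2.5}(iii) to the nonnegative $L^1$ function $\omega\mapsto d^2(\ffi(\omega),\eta(\omega))$ yields
$$
\bd_2^2(\ffi,\eta)=\int_\Omega\biggl[\int_\Omega d^2(\ffi(\tau),\eta(\tau))\,d\bP_\omega(\tau)\biggr]d\bP(\omega).
$$
By Lemma \ref{L-2.6}, for $\bP$-a.e.\ $\omega\in\Omega$ the function $\tau\mapsto\eta(\tau)$ is $\bP_\omega$-a.e.\ constant and equal to $\eta(\omega)$, so the inner integral becomes
$$
\int_M d^2(x,\eta(\omega))\,d(\ffi_*\bP_\omega)(x).
$$
Since $\ffi_*\bP_\omega\in\cP^2(M)$ for a.e.\ $\omega$ by Lemma \ref{L-2.7}(1) (with $p=2$) and since, on a global NPC space, $\lambda(\mu)$ is by construction the unique minimizer of $z\mapsto\int_Md^2(z,x)\,d\mu(x)$ for $\mu\in\cP^2(M)$, the inner integral is pointwise minimized over $\eta(\omega)\in M$ by taking $\eta(\omega)=\lambda(\ffi_*\bP_\omega)=E_\cB^\lambda(\ffi)(\omega)$. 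Integrating this pointwise inequality gives $\bd_2(\ffi,\eta)\ge\bd_2(\ffi,E_\cB^\lambda(\ffi))$, and since $E_\cB^\lambda(\ffi)\in L^2(\Omega,\cB,\bP;M)$ by Theorem \ref{T-2.9}(1), uniqueness of Sturm's minimizer forces $\bE_\cB(\ffi)=E_\cB^\lambda(\ffi)$.

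\textbf{Extension to arbitrary $p\in[1,\infty]$.} For $p\in(2,\infty]$ one has $L^p(\Omega;M)\subset L^2(\Omega;M)$ because $\bP$ is a probability measure, and both $\bE_\cB$ and $E_\cB^\lambda$ extend the common $L^2$-definition, so the equality is immediate. For $p\in[1,2)$, both operators are $\bd_p$-contractive on $L^p(\Omega;M)$---namely $\bE_\cB$ by \eqref{F-2.6} and $E_\cB^\lambda$ by Theorem \ref{T-2.9}(2) (extended to $p=1$ via the same disintegration argument as in Section 2, and to $p=\infty$ via Remark \ref{R-2.11})---and they agree on $L^2(\Omega;M)$, which contains all $M$-valued simple functions and is therefore $\bd_p$-dense in $L^p(\Omega;M)$ by Lemma \ref{L-1.1}. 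Continuity of both operators then promotes the equality to all of $L^p(\Omega;M)$.

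\textbf{Main obstacle.} The delicate point is the pointwise variational argument in the $p=2$ case: it relies on (i) identifying the inner minimizer $z\mapsto\int_Md^2(z,x)\,d(\ffi_*\bP_\omega)(x)$ with $\lambda(\ffi_*\bP_\omega)$, which is exactly the defining property of the canonical NPC barycenter and is where the global NPC structure is essential, and (ii) the $\cB$-strong measurability of the candidate $\omega\mapsto\lambda(\ffi_*\bP_\omega)$, without which we could not use it as a competitor for Sturm's minimization problem. The first ingredient is built into $\lambda$, and the second is supplied precisely by Theorem \ref{T-2.9}(1), which is why the abstract conditional expectation developed in this section makes the identification work cleanly.
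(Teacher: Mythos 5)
Your proof is correct and takes essentially the same route as the paper: for $p=2$ it combines the disintegration identity (Lemma \ref{L-2.5}\,(iii)), the fiberwise constancy from Lemma \ref{L-2.6}, the minimizing property of $\lambda$ on $\cP^2(M)$, and the membership $E_\cB^\lambda(\ffi)\in L^2(\Omega,\cB,\bP;M)$ from Theorem \ref{T-2.9}\,(1) to show that $E_\cB^\lambda(\ffi)$ realizes Sturm's minimum, and then passes to general $p\in[1,\infty]$ by contractivity of both operators plus density. The only cosmetic differences are that you test against an arbitrary competitor $\eta$ where the paper tests only against $\bE_\cB(\ffi)$ itself, and that you split the extension into $p>2$ (trivial inclusion into $L^2$) and $p\in[1,2)$ (density) where the paper handles all $p$ uniformly via $L^\infty$ approximants; note also that, exactly like the paper, you implicitly use that the $\bP_\omega$-a.e.\ constant value of a $\cB$-measurable function is its value at $\omega$, which is slightly more than Lemma \ref{L-2.6} literally states but is standard.
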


\begin{proof}
First, assume that $p=2$ and $\ffi\in L^2(\Omega;M)$. By Lemma \ref{L-2.6} there is a
$\bP$-null set $N\in\cB$ such that for every $\omega\in\Omega\setminus N$, both
$E_\cB^\lambda(\tau)$ and $\bE_\cB(\ffi)(\omega)$ are constant
$\bP_\omega$-a.e.\ $\tau\in\Omega$. Hence, for every $\omega\in\Omega\setminus N$,
letting
$$
E_\cB^\lambda(\ffi)(\tau)=\lambda(\ffi_*\bP_\omega)=x,\quad
\bE_\cB(\ffi)(\tau)=z\quad\mbox{$\bP_\omega$-a.e.},
$$
we have
\begin{align*}
\int_\Omega d^2(E_\cB^\lambda(\ffi)(\tau),\ffi(\tau))\,d\bP_\omega(\tau)
&=\int_\Omega d^2(x,\ffi(\tau))\,d\bP_\omega(\tau)
=\int_Md^2(\lambda(\ffi_*\bP_\omega),y)\,d\ffi_*\bP_\omega(y) \\
&\le\int_Md^2(z,y)\,d\ffi_*\bP_\omega(y)
=\int_\Omega d^2(z,\ffi(\tau))\,d\bP_\omega(\tau) \\
&=\int_\Omega d^2(\bE_\cB(\ffi)(\tau),\ffi(\tau))\,d\bP_\omega(\tau).
\end{align*}
Therefore, we have by Lemma \ref{L-2.5}\,(iii)
\begin{align*}
\bd_2^2(E_\cB^\lambda(\ffi),\ffi)
&=\int_\Omega\biggl[\int_\Omega d^2(E_\cB^\lambda(\ffi)(\tau),\ffi(\tau))
\,d\bP_\omega(\tau)\biggr]d\bP(\omega) \\
&\le\int_\Omega\biggl[\int_\Omega d^2(\bE_\cB(\ffi)(\tau),\ffi(\tau))
\,d\bP_\omega(\tau)\biggr]d\bP(\omega)=\bd_2^2(\bE_\cB(\ffi),\ffi).
\end{align*}
Hence $\bE_\cB(\ffi)=E_\cB^\lambda(\ffi)$ follows by definition of $\bE_\cB(\ffi)$.

Next, let $p\in[1,\infty]$ be arbitrary and $\ffi\in L^p(\Omega;M)$. One can choose a
sequence $\{\ffi_k\}$ in $L^\infty(\Omega;M)$ ($\subset L^2(\Omega;M)$) such that
$\bd_p(\ffi_k,\ffi)\to0$. Since $\bE_\cB(\ffi_k)=E_\cB^\lambda(\ffi_k)$ for all $k$ by the
above case, one has by \eqref{F-2.6} and \eqref{F-2.7}
\begin{align*}
\bd_p(\bE_\cB(\ffi),E_\cB^\lambda(\ffi))
&\le\bd_p(\bE_\cB(\ffi),\bE_\cB(\ffi_k))+\bd_p(E_\cB^\lambda(\ffi_k),E_\cB^\lambda(\ffi)) \\
&\le2\bd_p(\ffi_k,\ffi)\,\longrightarrow\,0,
\end{align*}
and hence $\bE_\cB(\ffi)=E_\cB^\lambda(\ffi)$.
\end{proof}

\section{Martingale convergence theorem}

Let $(\Omega,\cA,\bP)$ be a probability space on a standard Borel
space $(\Omega,\cA)$. Let $\{\cB_n\}_{n=1}^\infty$ be a sequence of
sub-$\sigma$-algebras of $\cA$ such that either
$\cB_1\subset\cB_2\subset\cdots$ or
$\cB_1\supset\cB_2\supset\cdots$. Then let $\cB_\infty$ be the
sub-$\sigma$-algebra of $\cA$ generated by
$\bigcup_{n=1}^\infty\cB_n$ in the increasing case and
$\cB_\infty:=\bigcap_{n=1}^\infty\cB_n$ in the decreasing case. Let
$1\le p<\infty$ and $\beta:\cP^p(M)\to M$ be a $p$-contractive
barycentric map. For every $\ffi\in L^p(\Omega;M)$ we have a
sequence $\{E_{\cB_n}^\beta(\ffi)\}_{n=1}^\infty$ of
$\beta$-conditional expectations, which we call a
\emph{$\beta$-martingale} of regular type with respect to
$\{\cB_n\}$. (A different and more intrinsic definition will be given in
Definition \ref{D-3.5}.)

A main result of this section is the martingale convergence theorem for $\beta$-martingales
of regular type. To prove this, we follow the idea of the proof of Banach's theorem given
in \cite[IV.11.3]{DS}. So we treat the space $\cM(\Omega;\bR)$ of measurable real functions
on $\Omega$, where $f=g$ in $\cM(\Omega;\bR)$ is as usual understood as
$f(\omega)=g(\omega)$ a.e. As is well-known \cite{DS}, $\cM(\Omega,\bR)$ is a
Fr\'echet space with the complete metric $\rho(f,g)=|f-g|_\bP$, where
\begin{eqnarray}\label{E:W}
|f|_\bP:=\inf_{\alpha>0}[\alpha+\bP\{\omega:|f(\omega)|>\alpha\}],\qquad f\in\cM(\Omega;\bR).
\end{eqnarray}
Note that the topology induced by $|\cdot|_\bP$ on $\cM(\Omega;\bR)$ coincides with the
topology of convergence in measure $\bP$.

\begin{thm}\label{T:martingale}
Assume that $(\Omega,\cA,\bP)$ be a standard Borel probability space. Let $\cB_n$,
$n\in\bN\cup\{\infty\}$, be sub-$\sigma$-algebras of $\cA$, either increasing or decreasing,
and let $1\le p<\infty$ and $\beta:\cP^p(M)\to M$ be as above. Then for every
$\ffi\in L^p(\Omega;M)$, as $n\to\infty$,
$$
\bd_p\bigl(E_{\cB_n}^\beta(\ffi),E_{\cB_\infty}^\beta(\ffi)\bigr)\,\longrightarrow\,0
\quad\mbox{and}\quad
d\bigl(E_{\cB_n}^\beta(\ffi)(\omega),E_{\cB_\infty}^\beta(\ffi)(\omega)\bigr)
\,\longrightarrow\,0\ \ a.e.
$$
\end{thm}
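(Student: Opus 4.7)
My plan is to adapt the classical proof of Doob's convergence theorem (DS IV.11.3) to the $M$-valued setting. The principal obstacle is the lack of the tower property for $\beta$-conditional expectations (Example \ref{E-2.12}), which prevents a direct identification of the limit of $E_{\cB_n}^\beta(\ffi)$ with $E_{\cB_\infty}^\beta(\ffi)$ by purely abstract arguments. The technical device that circumvents this is Lemma \ref{L-2.7}(2), which translates $M$-valued pointwise estimates into scalar conditional-expectation estimates, to which Doob's classical maximal inequalities apply. The scheme is: first prove the theorem on the dense subset of $M$-valued simple functions, then extend via a maximal-inequality bootstrap.

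For simple $\ffi=\sum_{j=1}^K\1_{A_j}x_j$, Lemma \ref{L-2.5}(ii) identifies $\bP_{n,\omega}(A_j)=E[\1_{A_j}\mid\cB_n](\omega)$ a.e., so
$$
E_{\cB_n}^\beta(\ffi)(\omega)=\beta\Big(\sum_{j=1}^K E[\1_{A_j}\mid\cB_n](\omega)\,\delta_{x_j}\Big).
$$
Combining \eqref{F-2.1} with Lemma \ref{L-1.3}, and writing $\Delta:=\diam\{x_1,\dots,x_K\}$,
$$
d(E_{\cB_n}^\beta(\ffi)(\omega),E_{\cB_\infty}^\beta(\ffi)(\omega))\le\Delta\Big[\tfrac{1}{2}\sum_{j=1}^K\big|E[\1_{A_j}\mid\cB_n](\omega)-E[\1_{A_j}\mid\cB_\infty](\omega)\big|\Big]^{1/p}.
$$
The classical Doob convergence theorem, in either its forward or its reverse form according to whether $\cB_n$ is increasing or decreasing, drives each scalar difference on the right to $0$ a.e. Since the whole expression is bounded by $\Delta$, dominated convergence also yields the $L^p$ statement.

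For general $\ffi\in L^p(\Omega;M)$, approximate by a simple $\psi$ with $\bd_p(\ffi,\psi)<\eps$ (Lemma \ref{L-1.1}). The $L^p$ conclusion is immediate from Theorem \ref{T-2.9}(2):
$$
\bd_p(E_{\cB_n}^\beta(\ffi),E_{\cB_\infty}^\beta(\ffi))\le 2\bd_p(\ffi,\psi)+\bd_p(E_{\cB_n}^\beta(\psi),E_{\cB_\infty}^\beta(\psi)),
$$
where the second summand tends to $0$ by the dense-set case. For the a.e. statement, I extract a maximal inequality by applying Lemma \ref{L-2.7}(2) to the disintegration $(\bP_{n,\omega})$: for each $n$,
$$
d^p(E_{\cB_n}^\beta(\ffi)(\omega),E_{\cB_n}^\beta(\psi)(\omega))\le E[d^p(\ffi,\psi)\mid\cB_n](\omega)\ \ \text{a.e.},
$$
so the scalar Doob $L^1$ maximal inequality (valid for forward as well as reverse martingales) gives
$$
\bP\Big(\sup_{n}d(E_{\cB_n}^\beta(\ffi),E_{\cB_n}^\beta(\psi))>\alpha\Big)\le\bd_p^p(\ffi,\psi)/\alpha^p.
$$
Splitting via the triangle inequality and using the dense-set a.e. convergence for $\psi$,
$$
\limsup_n d(E_{\cB_n}^\beta(\ffi),E_{\cB_\infty}^\beta(\ffi))\le\sup_n d(E_{\cB_n}^\beta(\ffi),E_{\cB_n}^\beta(\psi))+d(E_{\cB_\infty}^\beta(\ffi),E_{\cB_\infty}^\beta(\psi))\ \ \text{a.e.},
$$
and combining the maximal inequality with Markov's inequality applied to the $L^p$-bound of the second term from Theorem \ref{T-2.9}(2), I obtain
$$
\bP\Big(\limsup_n d(E_{\cB_n}^\beta(\ffi),E_{\cB_\infty}^\beta(\ffi))>2\alpha\Big)\le 2\bd_p^p(\ffi,\psi)/\alpha^p.
$$
Letting $\bd_p(\ffi,\psi)\to 0$ forces the left side to vanish for every $\alpha>0$, which completes the a.e. convergence.
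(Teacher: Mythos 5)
Your proposal is correct, and it shares the paper's overall skeleton: first the simple-function case via Lemma \ref{L-2.5}\,(ii), Lemma \ref{L-1.3} and the classical (forward or reverse) martingale convergence theorem, then a density argument in which the $L^p$ statement follows from the contractivity in Theorem \ref{T-2.9}\,(2). Where you genuinely diverge is the almost-everywhere step. The paper follows the Banach-principle scheme of \cite[IV.11.3]{DS}: it introduces the oscillation functional
$W(\ffi)(\omega):=\lim_{k}\sup_{m,n\ge k}d\bigl(E_{\cB_m}^\beta(\ffi)(\omega),E_{\cB_n}^\beta(\ffi)(\omega)\bigr)$,
shows via Lemma \ref{L-2.7}\,(2) that
$|W(\ffi)-W(\psi)|\le2\bigl[\sup_nE_{\cB_n}[d^p(\ffi,\psi)]\bigr]^{1/p}$, invokes the continuity in measure of $f\mapsto\sup_n|E_{\cB_n}(f)|$ at $f=0$ (which is the maximal inequality in disguise), concludes $W(\ffi)=0$ by density, and only afterwards identifies the a.e.\ limit with $E_{\cB_\infty}^\beta(\ffi)$ through the already-proven $L^p$ convergence. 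You instead compare directly with $E_{\cB_\infty}^\beta(\ffi)$: the same Lemma \ref{L-2.7}\,(2) estimate feeds an explicit Doob weak-type $(1,1)$ maximal inequality for the scalar martingale $E[d^p(\ffi,\psi)\mid\cB_n]$, the $\cB_\infty$ term is handled by Markov's inequality plus Theorem \ref{T-2.9}\,(2), and letting $\bd_p(\ffi,\psi)\to0$ kills the probability of a positive $\limsup$. Your route is somewhat more quantitative and self-contained (no appeal to the continuity statement extracted from the proof in \cite{DS}, no separate limit-identification step), at the modest cost of having to note that the maximal inequality holds for reverse as well as forward martingales, which you correctly assert and which follows by applying Doob's inequality to the finite martingales $\bigl(E[f\mid\cB_N],\dots,E[f\mid\cB_1]\bigr)$ and letting $N\to\infty$. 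Both arguments rest on exactly the same two pillars, so the difference is one of packaging rather than of underlying idea, but your packaging is clean and complete.
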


\begin{proof}
First, assume that $\ffi$ is a simple function, so
$\ffi=\sum_{j=1}^K\1_{A_j}x_j$ with $x_j\in M$ and a measurable
partition $\{A_1,\dots,A_K\}$ of $\Omega$. By \eqref{F-2.3} we can
write
$$
E_{\cB_n}^\beta(\ffi)(\omega)=\beta\Biggl(\sum_{j=1}^K\xi_{j,n}(\omega)\delta_{x_j}\Biggr),
\qquad\omega\in\Omega,\ n\in\bN\cup\{\infty\},
$$
where $\xi_{j,n}(\omega)=E_{\cB_n}(\1_{A_j})(\omega)$, the usual
conditional expectation of $\1_{A_j}$ with respect to $\cB_n$. Here
we may assume that $\xi_{j,n}(\omega)\ge0$ and
$\sum_{j=1}^K\xi_{j,n}(\omega)=1$ for all $\omega\in\Omega$ and
$n\in\bN\cup\{\infty\}$. The classical martingale convergence
theorem (see, e.g., \cite{Do}) says that
$\xi_{j,n}\to\xi_{j,\infty}$ in $L^1$-norm and for a.e.\
$\omega\in\Omega$ as $n\to\infty$. With
$\Delta:=\diam\{x_1,\dots,x_K\}$ we have by Lemma \ref{L-1.3}
\begin{align*}
d\bigl(E_{\cB_n}^\beta(\ffi)(\omega),E_{\cB_\infty}^\beta(\ffi)(\omega)\bigr)
&\le d_p^W\Biggl(\sum_{j=1}^K\xi_{j,n}(\omega)\delta_{x_j},
\sum_{j=1}^K\xi_{j,\infty}(\omega)\delta_{x_j}\Biggr) \\
&\le\Delta\Biggl[{1\over2}
\sum_{j=1}^K|\xi_{j,n}(\omega)-\xi_{j,\infty}(\omega)|\Biggr]^{1/p}
\longrightarrow\,0\ \ \mbox{a.e.}\quad\mbox{as $n\to\infty$},
\end{align*}
and
\begin{align*}
\bd_p\bigl(E_{\cB_n}^\beta(\ffi),E_{\cB_\infty}^\beta(\ffi)\bigr)
&\le\Delta\Biggl[{1\over2}\sum_{j=1}^K
\int_\Omega|\xi_{j,n}(\omega)-\xi_{j,\infty}(\omega)|\,d\bP\Biggr]^{1/p}
\longrightarrow\,0\ \ \mbox{as $n\to\infty$}.
\end{align*}

For general $\ffi\in L^p(\Omega;M)$ choose a sequence $\{\ffi_k\}$
of simple functions such that $\bd_p(\ffi,\ffi_k)\to0$. By Theorem
\ref{T-2.9}\,(2) we have
\begin{align*}
\bd_p\bigl(E_{\cB_n}^\beta(\ffi),E_{\cB_\infty}^\beta(\ffi)\bigr)
&\le\bd_p\bigl(E_{\cB_n}^\beta(\ffi),E_{\cB_n}^\beta(\ffi_k)\bigr)
+\bd_p\bigl(E_{\cB_n}^\beta(\ffi_k),E_{\cB_\infty}^\beta(\ffi_k)\bigr) \\
&\qquad+\bd_p\bigl(E_{\cB_\infty}^\beta(\ffi_k),E_{\cB_\infty}^\beta(\ffi)\bigr) \\
&\le2\bd_p(\ffi,\ffi_k)+\bd_p\bigl(E_{\cB_n}^\beta(\ffi_k),
E_{\cB_\infty}^\beta(\ffi_k)\bigr)
\end{align*}
so that
$$
\limsup_{n\to\infty}\bd_p\bigl(E_{\cB_n}^\beta(\ffi),E_{\cB_\infty}^\beta(\ffi)\bigr)
\le2\bd_p(\ffi,\ffi_k)\,\longrightarrow\,0\quad\mbox{as $k\to\infty$}.
$$
Hence $\bd_p\bigl(E_{\cB_n}^\beta(\ffi),E_{\cB_\infty}^\beta(\ffi)\bigr)\to0$ as $n\to\infty$.

It remains to prove the a.e.\ convergence. Choose an $x_0\in M$ and
let $\ffi_0:=\1_\Omega x_0$. Let
$(\bP_\omega^{(n)})_{\omega\in\Omega}$ be a disintegration of $\bP$
with respect to $\cB_n$ (see Section 2). Since
$x_0=\beta((\ffi_0)_*\bP_\omega^{(n)})$ for all $\omega\in\Omega$
and $n\in\bN\cup\{\infty\}$, note that
\begin{align*}
&d\bigl(E_{\cB_m}^\beta(\ffi)(\omega),E_{\cB_n}^\beta(\ffi)(\omega)\bigr) \\
&\qquad=d\bigl(\beta(\ffi_*\bP_\omega^{(m)}),\beta((\ffi_0)_*\bP_\omega^{(m)})\bigr)
+d\bigl(\beta(\ffi_*\bP_\omega^{(n)}),\beta((\ffi_0)_*\bP_\omega^{(n)})\bigr) \\
&\qquad\le\biggl[\int_\Omega
d^p(\ffi(\tau),x_0)\,d\bP_\omega^{(m)}(\tau)\biggr]^{1/p}
+\biggl[\int_\Omega
d^p(\ffi(\tau),x_0)\,d\bP_\omega^{(n)}(\tau)\biggr]^{1/p}
\end{align*}
for a.e. $\omega\in \Omega$, where we have used Lemma \ref{L-2.7}\,(2). Therefore, we find that
$$
\sup_{m,n\ge1}d\bigl(E_{\cB_m}^\beta(\ffi)(\omega),E_{\cB_n}^\beta(\ffi)(\omega)\bigr)
\le2\biggl[\sup_{n\ge1}E_{\cB_n}[d^p(\ffi,x_0)](\omega)\biggr]^{1/p}
\ \ \mbox{a.e.}\ \omega,
$$
where $\bigl\{E_{\cB_n}[d^p(\ffi,x_0)]\bigr\}_{n=1}^\infty$ is the usual martingale for
the function $\omega\mapsto d^p(\ffi(\omega),x_0)$ in $L^1(\Omega;\bR)$. For each
$\ffi\in L^p(\Omega;M)$, since the classical a.e.\ martingale convergence gives
$$
\sup_{n\ge1}E_{\cB_n}[d^p(\ffi,x_0)](\omega)<\infty
\ \ \mbox{a.e.}\ \omega,
$$
we can define a function $W(\ffi)\in\cM(\Omega;\bR)$ by
$$
W(\ffi)(\omega):=\lim_{k\to\infty}\sup_{m,n\ge k}
d\bigl(E_{\cB_m}^\beta(\ffi)(\omega),E_{\cB_n}^\beta(\ffi)(\omega)\bigr)
\ \ \mbox{a.e.}\ \omega.
$$
Then it is obvious that $\lim_{n\to\infty}E_{\cB_n}^\beta(\ffi)(\omega)$ exists a.e.\ if and
only if $W(\ffi)=0$ as an element of $\cM(\Omega;\bR)$. In this case, the a.e.\ limit of
$E_{\cB_n}^\beta(\ffi)(\omega)$ must be $E_{\cB_\infty}^\beta(\ffi)(\omega)$ for
a.e.\ $\omega$ since $E_{\cB_n}^\beta(\ffi)\to E_{\cB_\infty}^\beta(\ffi)$ in $L^p$ sense
as already shown above. Furthermore, we have shown above that
$\lim_{n\to\infty}E_{\cB_n}^\beta(\ffi)(\omega)$ exists a.e.\ if $\ffi$ is a simple function.
From the denseness of the simple functions in $L^p(\Omega;M)$, it suffices to prove that $W$
is a continuous map from $L^p(\Omega;M)$ into $\cM(\Omega;\bR)$ equipped with topology of
convergence in measure $\bP$.

To prove the last statement, note that for every $\ffi,\psi\in
L^p(\Omega;M)$ and every $m,n\ge1$,
\begin{align*}
d\bigl(E_{\cB_m}^\beta(\ffi)(\omega),E_{\cB_n}^\beta(\ffi)(\omega)\bigr)
 &\le
d\bigl(E_{\cB_m}^\beta(\ffi)(\omega),E_{\cB_m}^\beta(\psi)(\omega)\bigr)
+d\bigl(E_{\cB_m}^\beta(\psi)(\omega),E_{\cB_n}^\beta(\psi)(\omega)\bigr) \\
&\qquad
+d\bigl(E_{\cB_n}^\beta(\psi)(\omega),E_{\cB_n}^\beta(\ffi)(\omega)\bigr),
\end{align*}
which implies that
\begin{align*}
&\sup_{m,n\ge k}d\bigl(E_{\cB_m}^\beta(\ffi)(\omega),E_{\cB_n}^\beta(\ffi)(\omega)\bigr) \\
&\quad\le\sup_{m,n\ge k}
d\bigl(E_{\cB_m}^\beta(\psi)(\omega),E_{\cB_n}^\beta(\psi)(\omega)\bigr)
+2\sup_{n\ge1}d\bigl(E_{\cB_n}^\beta(\ffi)(\omega),E_{\cB_n}^\beta(\psi)(\omega)\bigr).
\end{align*}
We hence have
\begin{align*}
|W(\ffi)(\omega)-W(\psi)(\omega)|
&\le2\sup_{n\ge1}d\bigl(E_{\cB_n}^\beta(\ffi)(\omega),E_{\cB_n}^\beta(\psi)(\omega)\bigr) \\
&\le2\sup_{n\ge1}\biggl[\int_\Omega d^p(\ffi(\tau),\psi(\tau))\,d\bP_\omega^{(n)}(\tau)
\biggr]^{1/p}\ \ \mbox{a.e.}\ \omega
\end{align*}
by Lemma \ref{L-2.7}\,(2). Therefore,
\begin{align}\label{F3.1}
|W(\ffi)(\omega)-W(\psi)(\omega)|
\le2\biggl[\sup_{n\ge1}E_{\cB_n}[d^p(\ffi,\psi)](\omega)\biggr]^{1/p},
\end{align}
where $\bigl\{E_{\cB_n}[d^p(\ffi,\psi)]\bigr\}_{n=1}^\infty$ is the usual martingale for the
function $\omega\mapsto d^p(\ffi(\omega),\psi(\omega))$ in $L^1(\Omega;\bR)$. Note that the
function $\omega\mapsto\sup_{n\ge1}E_{\cB_n}[d^p(\ffi,\psi)](\omega)$ belongs to
$\cM(\Omega;\bR)$
since this supremum is finite for a.e.\ $\omega$. From the proof of \cite[IV.11.3]{DS} it
follows that
\begin{align}\label{F3.2}
f\in L^1(\Omega;\bR)\ \longmapsto\ \sup_{n\ge1}|E_{\cB_n}(f)(\omega)|\in\cM(\Omega;\bR)
\end{align}
is continuous at $f=0$. If $\ffi,\ffi_k\in L^p(\Omega;M)$ and $\bd_p(\ffi,\ffi_k)\to0$, then
$d^p(\ffi,\ffi_k)\to0$ in $L^1$-norm, and from \eqref{F3.1} and the continuity of \eqref{F3.2},
we obtain $W(\ffi_k)\to W(\ffi)$ in $\cM(\Omega;\bR)$, as desired.
\end{proof}

Sturm \cite{St02} showed a convergence theorem for martingales with locally compact range in a
global NPC space, where martingales were introduced from the viewpoint of stochastic processes
differently from those discussed above. In the rest of this section we consider Sturm's type
martingales in our general setting.

Assume that $(\Omega,\cA,\bP)$ and $\beta:\cP^p(M)\to M$ are as in Theorem \ref{T:martingale},
and let $\cB_n$, $n\in\bN\cup\{\infty\}$, be an increasing sequence of sub-$\sigma$-algebras
of $\cA$. Following \cite{St02}, for $\ffi\in L^p(\Omega;M)$ and $m\ge k\ge1$, we define
$$
E^\beta\bigl[\ffi\|(\cB_n)_{m\ge n\ge k}\bigr]
:=E_{\cB_k}^\beta\circ E_{\cB_{k+1}}^\beta\circ\dots\circ E_{\cB_m}^\beta(\ffi),
$$
which is an element of $L^p(\Omega,\cB_k,\bP;M)$. The proof of the next lemma is
based on Theorem \ref{T:martingale}.

\begin{lemma}\label{L-3.2}
For every $\ffi\in L^p(\Omega;M)$ and $k\ge1$ the following equal
limits exist$:$
\begin{align}\label{F-3.4}
\lim_{m\to\infty}E^\beta\bigl[\ffi\|(\cB_n)_{m\ge n\ge k}\bigr]
=\lim_{m\to\infty}E^\beta\bigl[E_{\cB_\infty}^\beta(\ffi)\|(\cB_n)_{m\ge n\ge k}\bigr]
\quad\mbox{in metric $\bd_p$}.
\end{align}
\end{lemma}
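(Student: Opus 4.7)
The plan is to write $T_{k,m} := E_{\cB_k}^\beta \circ E_{\cB_{k+1}}^\beta \circ \cdots \circ E_{\cB_m}^\beta$ so that $E^\beta[\ffi \| (\cB_n)_{m \ge n \ge k}] = T_{k,m}(\ffi)$, and to argue in two stages: first reduce to $\cB_\infty$-measurable arguments, then establish convergence there. Iterating Theorem \ref{T-2.9}(2) shows that $T_{k,m}$ is contractive on $L^p(\Omega;M)$:
$$\bd_p(T_{k,m}(\ffi_1), T_{k,m}(\ffi_2)) \le \bd_p(\ffi_1, \ffi_2).$$

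For the first stage I would set $\psi := E_{\cB_\infty}^\beta(\ffi)$ and use the factorization $T_{k,m} = T_{k,m-1} \circ E_{\cB_m}^\beta$ together with iterated contractivity to obtain
$$\bd_p(T_{k,m}(\ffi), T_{k,m}(\psi)) \le \bd_p\bigl(E_{\cB_m}^\beta(\ffi), E_{\cB_m}^\beta(\psi)\bigr).$$
Applying Theorem \ref{T:martingale} twice, once to $\ffi$ and once to $\psi$, gives $E_{\cB_m}^\beta(\ffi) \to E_{\cB_\infty}^\beta(\ffi) = \psi$ and $E_{\cB_m}^\beta(\psi) \to E_{\cB_\infty}^\beta(\psi) = \psi$ in $\bd_p$ (the second uses Theorem \ref{T-2.9}(3), since $\psi$ is $\cB_\infty$-measurable). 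Hence $\bd_p(T_{k,m}(\ffi), T_{k,m}(\psi)) \to 0$, so convergence of one of the two sequences in \eqref{F-3.4} forces convergence of the other to the same limit; this reduces the theorem to showing $\{T_{k,m}(\psi)\}_m$ converges.

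For the second stage I would show $\{T_{k,m}(\psi)\}_m$ is Cauchy by truncating $\psi$ at finite level. Set $\psi_N := E_{\cB_N}^\beta(\psi)$; by Theorem \ref{T:martingale} applied to $\psi \in L^p(\Omega, \cB_\infty, \bP; M)$ one has $\psi_N \to \psi$ in $\bd_p$. The key observation is that $\psi_N$ is $\cB_j$-measurable for every $j \ge N$, so $E_{\cB_j}^\beta(\psi_N) = \psi_N$ by Theorem \ref{T-2.9}(3); consequently, for all $m \ge N$,
$$T_{k,m}(\psi_N) = E_{\cB_k}^\beta \circ \cdots \circ E_{\cB_{N-1}}^\beta(\psi_N) = T_{k,N-1}(\psi_N),$$
independent of $m$. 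Combined with the contractive bound $\sup_m \bd_p(T_{k,m}(\psi), T_{k,m}(\psi_N)) \le \bd_p(\psi, \psi_N)$ and the triangle inequality, this yields $\bd_p(T_{k,m}(\psi), T_{k,m'}(\psi)) \le 2\bd_p(\psi, \psi_N)$ for all $m, m' \ge N$, which is arbitrarily small once $N$ is large. Hence $\{T_{k,m}(\psi)\}$ is Cauchy and converges in the complete space $L^p(\Omega, \cB_k, \bP; M)$ (Lemma \ref{L-1.1}).

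The main obstacle is conceptual rather than technical: the tower property $E_{\cB_k}^\beta \circ E_{\cB_m}^\beta = E_{\cB_k}^\beta$ \emph{fails} for $\beta$-conditional expectations (Example \ref{E-2.12}), so $T_{k,m}$ does not collapse to a single conditional expectation as in the Banach-space case. The argument above salvages the situation by exploiting that $T_{k,m}$ nevertheless stabilizes once its input is measurable at a fixed finite level $N$, and by then using Theorem \ref{T:martingale} to approximate $\cB_\infty$-measurable inputs by such finite-level ones. The $L^p$-convergence of the classical martingale thus enters the proof twice: once to pass from $\ffi$ to $\psi$, and once again to approximate $\psi$ by $\psi_N$.
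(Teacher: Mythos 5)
Your proof is correct and follows essentially the same route as the paper's: both reduce to the $\cB_\infty$-measurable function $\ffi_\infty=E_{\cB_\infty}^\beta(\ffi)$ via iterated contractivity (Theorem \ref{T-2.9}\,(2)) together with the martingale convergence theorem (Theorem \ref{T:martingale}), and both obtain the Cauchy property by approximating $\ffi_\infty$ with a function measurable at a finite level $N$, which is fixed by all higher conditional expectations (Theorem \ref{T-2.9}\,(3)) so that the tail of the composition collapses. Your explicit approximant $\psi_N=E_{\cB_N}^\beta(\ffi_\infty)$ is just a concrete instantiation of the density of $\bigcup_{n}L^p(\Omega,\cB_n,\bP;M)$ in $L^p(\Omega,\cB_\infty,\bP;M)$ that the paper invokes, so the two arguments differ only in organization.
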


\begin{proof}
For notational simplicity, for any $\ffi\in L^p(\Omega;M)$ write
$\ffi_\infty:=E_{\cB_\infty}^\beta(\ffi)$ and
$\ffi_{m,k}:=E^\beta\bigl[\ffi\|(\cB_n)_{m\ge n\ge k}\bigr]$ for $m\ge k\ge1$.
For $l>m\ge k$ we have
\begin{align}\label{ee}
\bd_p(\ffi_{m,k},\ffi_{l,k})
\le\bd_p(\ffi_{m,k},(\ffi_\infty)_{m,k})+\bd_p((\ffi_\infty)_{m,k},(\ffi_\infty)_{l,k})
+\bd_p((\ffi_\infty)_{l,k},\ffi_{l,k}).
\end{align}
Moreover, by Theorem \ref{T-2.9}\,(2) and Theorem
3.1,
\begin{align}
\bd_p(\ffi_{m,k},(\ffi_\infty)_{m,k})
&\le\bd_p\bigl(E_{\cB_m}^\beta(\ffi),E_{\cB_m}^\beta(\ffi_\infty)\bigr) \nonumber\\
&\le\bd_p\bigl(E_{\cB_m}^\beta(\ffi),\ffi_\infty)
+\bd_p\bigl(\ffi_\infty,E_{\cB_m}^\beta(\ffi_\infty)\bigr) \nonumber\\
&\longrightarrow\,0\quad\mbox{as $m\to\infty$}, \label{rf}
\end{align}
and similarly $\bd_p(\ffi_{l,k},(\ffi_\infty)_{l,k})\to0$ as $l\to\infty$. Since
$\bigcup_{n=1}^\infty L^p(\Omega,\cB_n,\bP;M)$ is $\bd_p$-dense in
$L^p(\Omega,\cB_\infty,\bP;M)$, for every $\eps>0$ one can choose a
$\psi\in L^p(\Omega,\cB_n,\bP;M)$ for some $n\ge1$ such that $\bd_p(\psi,\ffi_\infty)<\eps$.
One has
\begin{align*}
\bd_p((\ffi_\infty)_{m,k},(\ffi_\infty)_{l,k})
&=\bd_p\bigl(E_{\cB_k}^\beta\circ\dots\circ E_{\cB_m}^\beta(\ffi_\infty),
E_{\cB_k}^\beta\circ\dots\circ E_{\cB_l}^\beta(\ffi_\infty)\bigr) \\
&\le\bd_p\bigl(\ffi_\infty,
E_{\cB_{m+1}}^\beta\circ\dots\circ E_{\cB_l}^\beta(\ffi_\infty)\bigr) \\
&\le\bd_p(\ffi_\infty,\psi)
+\bd_p\bigl(\psi,E_{\cB_{m+1}}^\beta\circ\dots\circ E_{\cB_l}^\beta(\ffi_\infty)\bigr).
\end{align*}
For $l>m\ge\max\{k,n\}$, since $\psi=E_{\cB_{m+1}}^\beta\circ\dots\circ
E_{\cB_l}^\beta(\psi)$, one has
$\bd_p((\ffi_\infty)_{m,k},(\ffi_\infty)_{l,k})\le2\eps$, which
implies that $\bd_p((\ffi_\infty)_{m,k},(\ffi_\infty)_{l,k})\to0$ as
$l,m\to\infty$. Hence it follows from (\ref{ee}) that $\bd_p(\ffi_{m,k},\ffi_{l,k})\to0$
as $l,m\to\infty$, so that $\ffi_{m,k}$ converges in $\bd_p$ to some element of
$L^p(\Omega,\cB_k,\bP;M)$ as $m\to\infty$. Thanks to (\ref{rf}), $(\ffi_\infty)_{m,k}$ also
converges to the same limit as $m\to\infty$.
\end{proof}

\begin{definition}\label{D-3.3}\rm
For every $\ffi\in L^p(\Omega;M)$ and $k\ge1$, we write
$$
E^\beta\bigl[\ffi\|(\cB_n)_{n\ge k}\bigr]
$$
for the equal limits in \eqref{F-3.4}, which is an element of $L^p(\Omega,\cB_k,\bP;M)$ and
we call the \emph{filtered $\beta$-conditional expectation} of $\ffi$ with respect to
$(\cB_n)_{n\ge k}$.
\end{definition}

The associativity in (4) below is a merit of filtered $\beta$-conditional expectations,
which is not satisfied for those in Theorem \ref{T:martingale} (see Example \ref{E-2.12}).

\begin{prop}\label{P-3.4}
Let $\ffi,\psi\in L^p(\Omega;M)$.
\begin{itemize}
\item[(1)] $E^\beta\bigl[\ffi\|(\cB_n)_{n\ge k}\bigr]\in L^p(\Omega,\cB_k,\bP;M)$ for all
$k\ge1$.
\item[(2)] For every $k\ge1$, $\ffi\in L^p(\Omega,\cB_k,\bP;M)$ if and only if
$E^\beta\bigl[\ffi\|(\cB_n)_{n\ge k}\bigr]=\ffi$.
\item[(3)] $\bd_p\bigl(E^\beta\bigl[\ffi\|(\cB_n)_{n\ge k}\bigr],
E^\beta\bigl[\psi\|(\cB_n)_{n\ge k}\bigr]\bigr)\le\bd_p(\ffi,\psi)$ for all $k\ge1$.
\item[(4)] For every $l\ge k\ge1$,
$$
E^\beta\bigl[E^\beta\bigl[\ffi\|(\cB_n)_{n\ge l}\bigr]\|(\cB_n)_{n\ge k}\bigr]
=E^\beta\bigl[\ffi\|(\cB_n)_{n\ge k}\big].
$$
\end{itemize}
\end{prop}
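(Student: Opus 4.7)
The plan is to prove the four parts in order, in each case reducing to the defining limit (\ref{F-3.4}) and the properties of the ordinary $\beta$-conditional expectation established in Theorem \ref{T-2.9}. Throughout, abbreviate $\ffi_{m,k}:=E_{\cB_k}^\beta\circ E_{\cB_{k+1}}^\beta\circ\dots\circ E_{\cB_m}^\beta(\ffi)$ for $m\ge k$, so that $E^\beta[\ffi\|(\cB_n)_{n\ge k}]$ is by definition the $\bd_p$-limit of $\ffi_{m,k}$ as $m\to\infty$.

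For (1), each iterate $\ffi_{m,k}$ lies in $L^p(\Omega,\cB_k,\bP;M)$ by Theorem \ref{T-2.9}\,(1), and this space is $\bd_p$-complete by Lemma \ref{L-1.1}; hence its $\bd_p$-limit belongs to $L^p(\Omega,\cB_k,\bP;M)$. For (2), if $\ffi\in L^p(\Omega,\cB_k,\bP;M)$, then since $\cB_k\subset\cB_m$ for every $m\ge k$ we have $\ffi\in L^p(\Omega,\cB_m,\bP;M)$, so Theorem \ref{T-2.9}\,(3) gives $E_{\cB_m}^\beta(\ffi)=\ffi$. Iterating from the outermost composition downward shows $\ffi_{m,k}=\ffi$ for every $m\ge k$, so the limit is $\ffi$. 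The converse is immediate from (1).

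For (3), Theorem \ref{T-2.9}\,(2) applied repeatedly yields $\bd_p(\ffi_{m,k},\psi_{m,k})\le\bd_p(\ffi,\psi)$ for every $m\ge k$; a triangle inequality argument passing to the $\bd_p$-limit gives the claimed contractivity. For (4), write $\psi:=E^\beta[\ffi\|(\cB_n)_{n\ge l}]$, which by (1) lies in $L^p(\Omega,\cB_l,\bP;M)$, hence in $L^p(\Omega,\cB_m,\bP;M)$ for every $m\ge l$. Consequently, by Theorem \ref{T-2.9}\,(3),
\[
E_{\cB_m}^\beta(\psi)=\psi\qquad\mbox{for every }m\ge l,
\]
and peeling off the innermost compositions one by one shows that for $m\ge l$,
\[
E_{\cB_k}^\beta\circ\dots\circ E_{\cB_m}^\beta(\psi)
=E_{\cB_k}^\beta\circ\dots\circ E_{\cB_{l-1}}^\beta(\psi),
\]
which is independent of $m$. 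Thus $E^\beta[\psi\|(\cB_n)_{n\ge k}]=E_{\cB_k}^\beta\circ\dots\circ E_{\cB_{l-1}}^\beta(\psi)$ (the case $l=k$ being trivial with an empty composition). On the other hand, splitting the $m$-fold composition at level $l$ gives, for $m\ge l$,
\[
\ffi_{m,k}=E_{\cB_k}^\beta\circ\dots\circ E_{\cB_{l-1}}^\beta(\ffi_{m,l}),
\]
and since $E_{\cB_k}^\beta\circ\dots\circ E_{\cB_{l-1}}^\beta$ is $\bd_p$-continuous by iterated Theorem \ref{T-2.9}\,(2), while $\ffi_{m,l}\to\psi$ in $\bd_p$ by definition of $\psi$, we can pass to the limit $m\to\infty$ to obtain $E^\beta[\ffi\|(\cB_n)_{n\ge k}]=E_{\cB_k}^\beta\circ\dots\circ E_{\cB_{l-1}}^\beta(\psi)$. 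Comparing the two expressions yields (4).

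The only subtle point is the interchange of the limit $m\to\infty$ with the outer fixed composition in (4); this is exactly where the $\bd_p$-contractivity furnished by Theorem \ref{T-2.9}\,(2) does the work. Everything else is a direct bookkeeping of iterated compositions.
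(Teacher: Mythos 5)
Your proof is correct and follows essentially the same route as the paper's: parts (1)--(3) by passing the properties of Theorem \ref{T-2.9} through the defining $\bd_p$-limit, and part (4) by noting that $E^\beta[\ffi\|(\cB_n)_{n\ge l}]$ is fixed by each $E_{\cB_m}^\beta$ with $m\ge l$ and then interchanging the fixed finite composition $E_{\cB_k}^\beta\circ\dots\circ E_{\cB_{l-1}}^\beta$ with the limit via contractivity. The only difference is that you spell out the continuity interchange and the trivial case $l=k$ explicitly, which the paper leaves implicit.
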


\begin{proof}
(1) is obvious.

(2)\enspace
If $\ffi\in L^p(\Omega,\cB_k,\bP;M)$, then
$E_{\cB_k}^\beta\circ\dots\circ E_{\cB_m}^\beta(\ffi)=\ffi$ for all $m\ge k$ and hence
$E^\beta\bigl[\ffi\|(\cB_n)_{n\ge k}\bigr]=\ffi$. The converse is obvious from (1).

(3)\enspace
For every $m\ge k\ge1$ we have by Theorem \ref{T-2.9}\,(2)
$$
\bd_p\bigl(E_{\cB_k}^\beta\circ\dots\circ E_{\cB_m}^\beta(\ffi),
E_{\cB_k}^\beta\circ\dots\circ E_{\cB_m}^\beta(\psi)\bigr)
\le\bd_p(\ffi,\psi),
$$
whose limit as $m\to\infty$ is the asserted inequality.

(4)\enspace
For simplicity, for $\ffi\in L^p(\Omega;M)$ write
$\ffi_{\infty,k}:=E^\beta\bigl[\ffi\|(\cB_n)_{n\ge k}\bigr]$ for $k\ge1$.
Since $\ffi_{\infty,l}\in L^p(\Omega,\cB_l,\bP;M)$ by (1), we have for every $m\ge l>k$
\begin{align*}
E_{\cB_k}^\beta\circ\dots\circ E_{\cB_m}^\beta(\ffi_{\infty,l})
&=E_{\cB_k}^\beta\circ\dots\circ E_{\cB_{l-1}}^\beta(\ffi_{\infty,l}) \\
&=\lim_{m\to\infty}E_{\cB_k}^\beta\circ\dots\circ E_{\cB_{l-1}}^\beta
\circ E_{\cB_l}^\beta\circ\dots\circ E_{\cB_m}^\beta(\ffi)=\ffi_{\infty,k}.
\end{align*}
Therefore,
$$
E^\beta\bigl[\ffi_{\infty,l}\|(\cB_n)_{n\ge k}\bigr]
=\lim_{m\to\infty}E_{\cB_k}^\beta\circ\dots\circ E_{\cB_m}^\beta(\ffi_{\infty,l})
=\ffi_{\infty,k},
$$
as required.
\end{proof}

Following \cite[Definition 4.1]{St02} we define:

\begin{definition}\label{D-3.5}\rm
A sequence $\{\ffi_k\}_{k=1}^\infty$ in $L^p(\Omega;M)$ is called a
\emph{filtered $\beta$-martingale} with respect to $\{\cB_n\}_{n=1}^\infty$ if
$\ffi_k\in L^p(\Omega,\cB_k,\bP;M)$ for every $k\ge1$
and
\begin{align}\label{F-3.7}
E^\beta\bigl[\ffi_{k+1}\|(\cB_n)_{n\ge k}\bigr]=\ffi_k,\qquad k\ge1.
\end{align}
\end{definition}

By associativity in Proposition \ref{P-3.4}\,(4), property \eqref{F-3.7} is equivalent to
$$
E^\beta\bigl[\ffi_l\|(\cB_n)_{n\ge k}\bigr]=\ffi_k,\qquad l\ge k\ge1.
$$
For any $\ffi\in L^p(\Omega;M)$, it is clear that the sequence
$\ffi_k:=E^\beta\bigl[\ffi\|(\cB_n)_{n\ge k}\bigr]$, $k\ge1$, is a filtered $\beta$-martingale
with respect to $\{\cB_n\}$. The next theorem includes its $\bd_p$-convergence.

\begin{thm}\label{T-3.6}
Let $\{\ffi_k\}_{k=1}^\infty$ be a filtered $\beta$-martingale with
respect to $\{\cB_n\}$. Then the following are equivalent$:$
\begin{itemize}
\item[(i)] there exists a $\ffi\in L^p(\Omega;M)$ such that
$\ffi_k=E^\beta\bigl[\ffi\|(\cB_n)_{n\ge k}\bigr]$ for all $k\ge1;$
\item[(ii)] $\ffi_k$ converges to some $\ffi_\infty\in L^p(\Omega,\cB_\infty,\bP;M)$ in
metric $\bd_p$ as $k\to\infty$.
\end{itemize}
\end{thm}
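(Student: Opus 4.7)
The plan is to show that both implications reduce, via Proposition \ref{P-3.4}, to the single observation that a filtered $\beta$-conditional expectation becomes the identity on $\cB_k$-measurable arguments, combined with the density of $\bigcup_n L^p(\Omega,\cB_n,\bP;M)$ in $L^p(\Omega,\cB_\infty,\bP;M)$.

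First I would prove (i) $\Rightarrow$ (ii). Set $\ffi_\infty:=E_{\cB_\infty}^\beta(\ffi)\in L^p(\Omega,\cB_\infty,\bP;M)$. The second equality in \eqref{F-3.4} of Lemma \ref{L-3.2} yields $\ffi_k=E^\beta\bigl[\ffi\|(\cB_n)_{n\ge k}\bigr]=E^\beta\bigl[\ffi_\infty\|(\cB_n)_{n\ge k}\bigr]$ for every $k$. Given $\eps>0$, by the denseness of $\bigcup_{n=1}^\infty L^p(\Omega,\cB_n,\bP;M)$ in $L^p(\Omega,\cB_\infty,\bP;M)$ (used already in the proof of Lemma \ref{L-3.2}) choose $\psi\in L^p(\Omega,\cB_n,\bP;M)$ with $\bd_p(\psi,\ffi_\infty)<\eps$. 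For $k\ge n$ we have $\psi\in L^p(\Omega,\cB_k,\bP;M)$, so Proposition \ref{P-3.4}\,(2) gives $E^\beta\bigl[\psi\|(\cB_j)_{j\ge k}\bigr]=\psi$, and then Proposition \ref{P-3.4}\,(3) (contractivity) yields
$$
\bd_p(\ffi_k,\psi)=\bd_p\bigl(E^\beta\bigl[\ffi_\infty\|(\cB_j)_{j\ge k}\bigr],
E^\beta\bigl[\psi\|(\cB_j)_{j\ge k}\bigr]\bigr)\le\bd_p(\ffi_\infty,\psi)<\eps.
$$
Triangle inequality then gives $\bd_p(\ffi_k,\ffi_\infty)<2\eps$ for all $k\ge n$, proving convergence in $\bd_p$ to $\ffi_\infty\in L^p(\Omega,\cB_\infty,\bP;M)$.

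Next I would prove (ii) $\Rightarrow$ (i) by simply taking $\ffi:=\ffi_\infty$. The filtered martingale identity \eqref{F-3.7} together with the associativity in Proposition \ref{P-3.4}\,(4) gives by a one-step induction
$$
\ffi_k=E^\beta\bigl[\ffi_l\|(\cB_n)_{n\ge k}\bigr]\qquad\text{for all }l\ge k\ge1,
$$
because $E^\beta\bigl[E^\beta[\ffi_{l+1}\|(\cB_n)_{n\ge l}]\|(\cB_n)_{n\ge k}\bigr]=E^\beta\bigl[\ffi_{l+1}\|(\cB_n)_{n\ge k}\bigr]$. Letting $l\to\infty$ and applying contractivity (Proposition \ref{P-3.4}\,(3)) to $\bd_p(\ffi_l,\ffi_\infty)\to0$ yields $\ffi_k=E^\beta\bigl[\ffi_\infty\|(\cB_n)_{n\ge k}\bigr]$, which is exactly (i) with $\ffi=\ffi_\infty$.

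The main conceptual obstacle would have been controlling the nonlinear nested compositions defining the filtered expectation, but this has already been absorbed into Lemma \ref{L-3.2} and Proposition \ref{P-3.4}; once those are in hand the only point requiring care is the unbounded $l\to\infty$ limit in (ii) $\Rightarrow$ (i), which is handled cleanly by the $\bd_p$-contractivity of $E^\beta[\,\cdot\,\|(\cB_n)_{n\ge k}]$. No further subtleties (such as strong measurability or disintegration arguments) need to enter at this level.
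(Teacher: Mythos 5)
Your proposal is correct and follows essentially the same route as the paper: for (i)\,$\Rightarrow$\,(ii) you use the equality of limits in Lemma \ref{L-3.2} to replace $\ffi$ by $\ffi_\infty=E_{\cB_\infty}^\beta(\ffi)$, approximate $\ffi_\infty$ by some $\psi\in L^p(\Omega,\cB_n,\bP;M)$ (the paper gets this $\psi$ from Theorem \ref{T:martingale}, you from the density statement, which amounts to the same thing), and conclude via Proposition \ref{P-3.4}\,(2),(3); for (ii)\,$\Rightarrow$\,(i) you use, exactly as the paper does, the identity $\ffi_k=E^\beta\bigl[\ffi_l\|(\cB_n)_{n\ge k}\bigr]$ for $l\ge k$ (from \eqref{F-3.7} and associativity) together with contractivity to pass to the limit $l\to\infty$. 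No gaps.
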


\begin{proof}
(i)\,$\implies$\,(ii).\enspace
Let $\ffi\in L^p(\Omega;M)$ be as stated in (i). Let
$\ffi_\infty:=E_{\cB_\infty}^\beta(\ffi)\in L^p(\Omega,\cB_\infty,\bP;M)$. For every $\eps>0$,
by Theorem \ref{T:martingale} one can choose a $\psi\in L^p(\Omega,\cB_l,\bP;M)$ for some
$l\ge1$ such that $\bd_p(\psi,\ffi_\infty)<\eps$. For every $k\ge l$, since
$E^\beta\bigl[\psi\|(\cB_n)_{n\ge k}\bigr]=\psi$ by Proposition \ref{P-3.4}\,(2), one has
\begin{align*}
\bd_p(\ffi_k,\ffi_\infty)
&\le\bd_p\bigl(E^\beta\bigl[\ffi_\infty\|(\cB_n)_{n\ge k}\bigr],
E\bigl[\psi\|(\cB_n)_{n\ge k}\bigr]\bigr)+\bd_p(\psi,\ffi_\infty) \\
&\le2\bd_p(\psi,\ffi_\infty)<2\eps
\end{align*}
by Proposition \ref{P-3.4}\,(3). Hence (ii) follows.

(ii)\,$\implies$\,(i).\enspace
For $l\ge k\ge1$ one has by Proposition \ref{P-3.4} again
\begin{align*}
\bd_p\bigl(\ffi_k,E^\beta\bigl[\ffi_\infty\|(\cB_n)_{n\ge k}\bigr]\bigr)
&=\bd_p\bigl(E^\beta\bigl[\ffi_l\|(\cB_n)_{n\ge k}\bigr],
E^\beta\bigl[\ffi_\infty\|(\cB_n)_{n\ge k}\bigr]\bigr) \\
&\le\bd_p(\ffi_l,\ffi_\infty)\,\longrightarrow\,0\ \ \mbox{as $l\to\infty$}.
\end{align*}
Hence $\ffi_k=E^\beta\bigl[\ffi_\infty\|(\cB_n)_{n\ge k}\bigr]$.
\end{proof}

\begin{remark}\label{R-3.6}\rm
When $(M,d)$ is a locally compact global NPC space and $\beta$ is a
canonical barycentric map $\lambda$, it follows from \cite[Theorem
4.11]{St02} (and Theorem \ref{T-2.13}) that if $\{\ffi_k\}$ in
$L^p(\Omega;M)$ is a filtered martingale and
$\sup_k\bd_p(z,\ffi_k)<\infty$ for some $z\in M$, then there exists
a $\cB_\infty$-measurable function $\ffi_\infty:\Omega\to M$ such
that $\ffi_k(\omega)\to\ffi_\infty(\omega)$ $\bP$-a.e. From the
Hopf-Rinow theorem (cf.\ \cite{BH}) we see that this result holds
more generally when $(M,d)$ is a locally compact and complete length
space and $\beta:\cP^p(M)\to M$ is any contractive barycentric map.
But it does not seem easy to extend the $\bP$-a.e.\ martingale
convergence of filtered $\beta$-martingales to our general setting.
Although the details are omitted here, the same result holds under
an even more general situation that $(M,d)$ satisfies
\emph{finite-compactness} with respect to $\beta$ in the sense that for any
finite set $Q_0$ in $M$ the closure of $\bigcup_{n=1}^\infty Q_n$ is
compact, where
$$
Q_n:=\bigl\{\beta(\mu):\mu\in\cP_0(M),\,\supp(\mu)\subset Q_{n-1}\bigr\},
\qquad n\in\bN.
$$
This finite-compactness property clearly holds in the case of Banach spaces
with the arithmetic mean map. But it is unknown whether it holds in the case
where $M=\bbP(\cH)$ on an infinite-dimensional Hilbert space $\cH$ and $\beta$
is the Karcher barycenter $G$ (see Example \ref{E-4.5}\,(b) below).
\end{remark}

\section{Ergodic theorem}

Let $T$ be a $\bP$-preserving measurable transformation on $(\Omega,\cA,\bP)$. It is clear
that the map $\ffi\mapsto\ffi\circ T$ is a $\bd_p$-isometric transformation on $L^p(\Omega;M)$.
(Although we may treat a measure-preserving action of an amenable group $G$ as in \cite{Na},
we consider the case $G=\bZ$ for the sake of simplicity.)

Let $1\le p<\infty$ and $\beta:\cP^p(M)\to M$ be a
$p$-contractive barycentric map. For each $\ffi\in
L^p(\Omega;M)$ we define the {\it empirical measures} (random
probability measures) of $\ffi$ as
$$
\mu_n^\ffi(\omega):={1\over n}\sum_{k=0}^{n-1}\delta_{\ffi(T^k\omega)},\qquad n\in\bN,
$$
i.e., for Borel sets $B\subset M$,
$$
\mu_n^\ffi(\omega)(B)={\#\{k\in\{0,1,\dots,n-1\}:\ffi(T^k\omega)\in B\}\over n},
$$
and consider the sequence of $M$-valued functions
$\beta(\mu_n^\ffi)(\omega):=\beta(\mu_n^\ffi(\omega))$, $\omega\in\Omega$, for $n\in\bN$.

\begin{lemma}\label{L-4.1}
For every $\ffi,\psi\in L^p(\Omega;M)$ we have $\beta(\mu_n^\ffi)\in L^p(\Omega;M)$ and
$$
\bd_p(\beta(\mu_n^\ffi),\beta(\mu_n^\psi))\le\bd_p(\ffi,\psi),\qquad n\in\bN.
$$
\end{lemma}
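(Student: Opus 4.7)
The plan has three pieces: strong measurability of $\omega\mapsto\beta(\mu_n^\ffi(\omega))$, the $p$th Bochner integrability, and the contractive estimate. Throughout, the key idea is that the ``diagonal'' coupling $\pi(\omega):=\frac1n\sum_{k=0}^{n-1}\delta_{(\ffi(T^k\omega),\psi(T^k\omega))}$ is a transport plan between $\mu_n^\ffi(\omega)$ and $\mu_n^\psi(\omega)$, together with the fact that $\bP$ is $T$-invariant so each $\ffi\circ T^k$ has the same $L^p$-norm as $\ffi$.

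For strong measurability, I would first note that, since $\ffi$ is strongly measurable and $T$ is measurable, each $\ffi\circ T^k$ is strongly measurable; hence the vector map $\omega\mapsto(\ffi(\omega),\ffi(T\omega),\dots,\ffi(T^{n-1}\omega))$ is strongly measurable into $M^n$. Next I would check that the map
\[
F:M^n\to M,\qquad F(x_0,\dots,x_{n-1}):=\beta\!\left(\tfrac1n\sum_{k=0}^{n-1}\delta_{x_k}\right)
\]
is continuous. Indeed, given $(x_0^{(j)},\dots,x_{n-1}^{(j)})\to(x_0,\dots,x_{n-1})$, the diagonal coupling bounds $d_p^W(\frac1n\sum\delta_{x_k^{(j)}},\frac1n\sum\delta_{x_k})^p\le\frac1n\sum_k d^p(x_k^{(j)},x_k)$, which tends to $0$, and then \eqref{F-2.1} transfers this to $F$. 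Composing a continuous map with a strongly measurable one yields strong measurability of $\omega\mapsto\beta(\mu_n^\ffi(\omega))$.

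For $L^p$-integrability, fix any $x\in M$. By \eqref{F-2.1} applied to $\mu_n^\ffi(\omega)$ and $\delta_x$,
\[
d^p(\beta(\mu_n^\ffi)(\omega),x)\le d_p^W(\mu_n^\ffi(\omega),\delta_x)^p
=\frac1n\sum_{k=0}^{n-1}d^p(\ffi(T^k\omega),x).
\]
Integrating and using $T$-invariance of $\bP$ gives $\int_\Omega d^p(\beta(\mu_n^\ffi),x)\,d\bP\le\int_\Omega d^p(\ffi,x)\,d\bP<\infty$, so $\beta(\mu_n^\ffi)\in L^p(\Omega;M)$.

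For the contractive inequality, I would use the diagonal plan $\pi(\omega)\in\Pi(\mu_n^\ffi(\omega),\mu_n^\psi(\omega))$ together with \eqref{F-2.1}:
\[
d^p(\beta(\mu_n^\ffi)(\omega),\beta(\mu_n^\psi)(\omega))
\le d_p^W(\mu_n^\ffi(\omega),\mu_n^\psi(\omega))^p
\le\frac1n\sum_{k=0}^{n-1}d^p(\ffi(T^k\omega),\psi(T^k\omega)).
\]
Integrating against $\bP$ and invoking $T$-invariance on each summand yields $\bd_p^p(\beta(\mu_n^\ffi),\beta(\mu_n^\psi))\le\bd_p^p(\ffi,\psi)$, and taking $p$th roots gives the claim. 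The only mildly subtle point is the strong measurability step; the $L^p$-bound and the contraction estimate are both immediate consequences of the diagonal coupling and $T$-invariance.
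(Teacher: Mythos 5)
Your proof is correct. The integrability bound and the contraction estimate are exactly the paper's: both rest on the diagonal coupling $\frac1n\sum_{k=0}^{n-1}\delta_{(\ffi(T^k\omega),\psi(T^k\omega))}$, which gives the pointwise inequality \eqref{F-4.2}, followed by integration and $T$-invariance of $\bP$. Where you genuinely diverge is the strong-measurability step. The paper first treats simple $\ffi$: writing $\mu_n^\ffi(\omega)=\sum_{A\in\bigvee_{k=0}^{n-1}T^{-k}\mathcal{F}}\1_A(\omega)\mu_A$ shows $\beta(\mu_n^\ffi)$ is itself a simple function, and then general $\ffi$ is handled by choosing simple $\ffi_l\to\ffi$ a.e.\ and using \eqref{F-4.2} with $\psi=\ffi_l$ to pass to the a.e.\ limit. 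You instead factor $\beta(\mu_n^\ffi)=F\circ(\ffi,\ffi\circ T,\dots,\ffi\circ T^{n-1})$, observe that $F:M^n\to M$ is continuous (indeed Lipschitz, by the same coupling plus \eqref{F-2.1}), and invoke the fact that a continuous map composed with a strongly measurable map is strongly measurable. Both routes are valid; yours is more modular and isolates the measure theory from the metric estimate, while the paper's keeps everything inside the simple-function approximation scheme it reuses throughout (and, as a byproduct, exhibits $\beta(\mu_n^\ffi)$ explicitly as a simple function when $\ffi$ is simple, which it exploits later, e.g.\ in \eqref{F-4.4}). One small point worth making explicit in your first step: to conclude that $\ffi\circ T^k$ is strongly measurable you need the exceptional null set of the simple approximants of $\ffi$ to pull back under $T^k$ to a null set, which holds because $T$ is $\bP$-preserving, not merely measurable.
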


\begin{proof}
Let $n\in\bN$ be arbitrarily fixed. First, assume that $\ffi$ is a
simple function so that $\ffi=\sum_{i=1}^K\1_{A_i}\delta_{x_i}$,
where $x_1,\dots,x_K\in M$ and ${\mathcal{F}}=\{A_1,\dots,A_K\}$ is
a measurable partition of $\Omega$. Then, as easily seen, we can write
$$
\mu_n^\ffi(\omega)=\sum_{A\in\bigvee_{k=0}^{n-1}T^{-k}{\mathcal{F}}}\1_A(\omega)\mu_A
$$
with $\mu_A\in\cP_0(M)$, where $\bigvee_{k=0}^{n-1}T^{-k}{\mathcal{F}}$ is the finite
partition generated by $T^{-k}\mathcal{F}$, $0\le k\le n-1$. Therefore,
$$
\beta(\mu_n^\ffi(\omega))=\sum_{A\in\bigvee_{k=0}^{n-1}
T^{-k}{\mathcal{F}}}\1_A(\omega)\beta(\mu_A)
$$
so that $\omega\mapsto\beta(\mu_n^\ffi(\omega))$ is a simple function.

Next, let $\ffi,\psi$ be arbitrary elements in $L^p(\Omega;M)$. For each fixed $\omega\in\Omega$,
since
$$
\pi:={1\over n}\sum_{k=0}^{n-1}\delta_{(\ffi(T^k\omega),\psi(T^k\omega))}\in\cP(M\times M)
$$
is in $\Pi(\mu_n^\ffi(\omega),\mu_n^\psi(\omega))$, we have
$$
d_p^W(\mu_n^\ffi(\omega),\mu_n^\psi(\omega))
\le\Biggl[{1\over n}\sum_{k=0}^{n-1}d^p(\ffi(T^k\omega),\psi(T^k\omega))\Biggr]^{1/p}.
$$
From this and the $p$-contractivity of $\beta$ we find that
\begin{equation}\label{F-4.2}
d(\beta(\mu_n^\ffi(\omega)),\beta(\mu_n^\psi(\omega)))
\le\Biggl[{1\over n}\sum_{k=0}^{n-1}d^p(\ffi(T^k\omega),\psi(T^k\omega))\Biggr]^{1/p},
\qquad\omega\in\Omega.
\end{equation}
Now, choose $M$-valued simple functions $\ffi_l$ ($l\in\bN$) such that
$d(\ffi(\omega),\ffi_l(\omega))\to0$ a.e.\ as $l\to\infty$. Letting $\psi=\ffi_l$ in
\eqref{F-4.2} we have $d(\beta(\mu_n^\ffi(\omega)),\beta(\mu_n^{\ffi_l}(\omega)))\to0$
a.e.\ as $l\to\infty$. Since $\beta(\mu_n^{\ffi_l})$'s are simple functions as proved
above, it follows that $\beta(\mu_n^\ffi)$ is a strongly measurable function on $\Omega$.
Letting $\psi=\1_\Omega x$ ($x\in M$) in \eqref{F-4.2}, since
$\beta(\mu_n^\psi)(\omega)=x$ for all $\omega\in\Omega$, we have
\begin{align*}
\int_\Omega d^p(\beta(\mu_n^\ffi(\omega)),x)\,d\bP(\omega)
&\le\int_\Omega{1\over
n}\sum_{k=0}^{n-1}d^p(\ffi(T^k\omega),x)\,d\bP(\omega)=\int_\Omega
d^p(\ffi(\omega),x)\,d\bP(\omega)<\infty
\end{align*}
so that $\beta(\mu_n^\ffi)\in L^p(\Omega;M)$. Finally, it follows from \eqref{F-4.2} again that
\begin{align*}
\int_\Omega d^p(\beta(\mu_n^\ffi(\omega)),\beta(\mu_n^\psi(\omega)))\,d\bP(\omega)
&\le\int_\Omega{1\over n}\sum_{k=0}^{n-1}d^p(\ffi(T^k\omega),\psi(T^k\omega))\,d\bP(\omega) \\
&=\int_\Omega d^p(\ffi(\omega),\psi(\omega))\,d\bP(\omega)
\end{align*}
so that $\bd_p(\beta(\mu_n^\ffi),\beta(\mu_n^\psi))\le\bd_p(\ffi,\psi)$.
\end{proof}

In \cite{Au} Austin obtained an $L^2$-ergodic theorem for the canonical barycentric
map on a global NPC space. In \cite{Na} Navas established an $L^1$-ergodic theorem for
a specific contractive barycentric map on a metric space of nonpositive curvature
in the sense of Busemann (a weaker notion than that of a global NPC space). In \cite{Li},
Navas' ergodic theorem was proved for parametrized barycentric maps extending the Cartan
(or Karcher) barycenter on the positive definite matrices.

In this section we give an $L^p$-ergodic theorem for $1\leq
p<\infty$ on a general complete metric space with a general
$p$-contractive barycentric map $\beta$. Moreover, we give the
description of the ergodic limit function in terms of the $\beta$-conditional
expectation. Since the proof of the next theorem is along the
essentially same lines as \cite{Au,Na}, we shall only present its
sketchy version.

\begin{thm}\label{T-4.2}
Let $1\le p<\infty$ and $\beta:\cP^p(M)\to M$ be a $p$-contractive barycentric map. Then there
exists a map $\Gamma$ from $L^p(\Omega;M)$ onto
$\{\ffi\in L^p(\Omega;M):\ffi\circ T=\ffi\}$ such that for every $\ffi,\psi\in L^p(\Omega;M)$,
\begin{itemize}
\item[(i)] $d(\beta(\mu_n^\ffi(\omega)),\Gamma(\ffi)(\omega))\to0$ a.e.\ as $n\to\infty$,
\item[(ii)] $\bd_p(\beta(\mu_n^\ffi),\Gamma(\ffi))\to0$ as $n\to\infty$,
\item[(iii)] $\bd_p(\Gamma(\ffi),\Gamma(\psi))\le\bd_p(\ffi,\psi)$.
\end{itemize}
Furthermore, if $T$ is ergodic, then $\Gamma(\ffi)$ is constant with value $E^\beta(\ffi)$,
the $\beta$-expectation of $\ffi$ $($see Definition $\ref{D-2.1})$.
\end{thm}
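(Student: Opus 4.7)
The plan is to first establish (i) and (ii) for simple functions by reducing to the classical scalar Birkhoff theorem applied to the partition indicators, then bootstrap to general $\ffi \in L^p(\Omega;M)$ by density, using Lemma \ref{L-4.1} as the crucial uniform-in-$n$ contraction estimate. For a simple function $\ffi = \sum_{i=1}^K \1_{A_i} x_i$ one has
$$
\mu_n^\ffi(\omega) = \sum_{i=1}^K w_{i,n}(\omega)\,\delta_{x_i}, \qquad w_{i,n}(\omega) := \frac{1}{n}\sum_{k=0}^{n-1}\1_{A_i}(T^k\omega),
$$
and the classical Birkhoff theorem gives $w_{i,n} \to w_{i,\infty} := \bE[\1_{A_i}\mid\cI]$ both a.e.\ and in $L^p$, where $\cI$ is the $\sigma$-algebra of $T$-invariant sets. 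Lemma \ref{L-1.3} (with $\Delta := \diam\{x_1,\dots,x_K\}$) converts this into $d_p^W(\mu_n^\ffi(\omega), \mu_\infty^\ffi(\omega)) \to 0$ a.e.\ and in $L^p$, where $\mu_\infty^\ffi(\omega) := \sum_i w_{i,\infty}(\omega)\delta_{x_i}$, and $p$-contractivity of $\beta$ then yields $\beta(\mu_n^\ffi) \to \Gamma(\ffi) := \beta(\mu_\infty^\ffi)$ in both senses.

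For general $\ffi$, approximate by simple $\ffi_l$ with $\bd_p(\ffi,\ffi_l) \to 0$; Lemma \ref{L-4.1} gives $\bd_p(\beta(\mu_n^\ffi), \beta(\mu_n^{\ffi_l})) \le \bd_p(\ffi,\ffi_l)$ uniformly in $n$, and a standard $3\eps$ argument yields that $\{\beta(\mu_n^\ffi)\}_n$ is $\bd_p$-Cauchy, defining $\Gamma(\ffi) \in L^p(\Omega;M)$ and proving (ii). Property (iii) follows by passing $n\to\infty$ in Lemma \ref{L-4.1}. The $T$-invariance $\Gamma(\ffi)\circ T = \Gamma(\ffi)$ follows from the coupling bound
$$
d_p^W(\mu_n^\ffi(T\omega), \mu_n^\ffi(\omega))^p \le \tfrac{1}{n}\,d^p(\ffi(T^n\omega), \ffi(\omega)),
$$
whose $\bd_p$-expectation is $O(n^{-1/p}\,\bd_p(\ffi,x_0))\to0$ by $T$-invariance of $\bP$.

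The main difficulty is the a.e.\ convergence (i) for general $\ffi$. Following the Banach-type maximal argument already used in the proof of Theorem \ref{T:martingale}, I would define
$$
W(\ffi)(\omega) := \limsup_{m\to\infty}\sup_{n \ge m}d(\beta(\mu_n^\ffi(\omega)), \beta(\mu_m^\ffi(\omega))) \in \cM(\Omega;\bR),
$$
which vanishes a.e.\ on simple functions by the case handled above. Combining the pointwise Wasserstein bound from the proof of Lemma \ref{L-4.1},
$$
d_p^W(\mu_n^\ffi(\omega), \mu_n^\psi(\omega))^p \le \tfrac{1}{n}\sum_{k=0}^{n-1}d^p(\ffi(T^k\omega), \psi(T^k\omega)),
$$
with the $p$-contractivity of $\beta$ and the triangle inequality yields
$$
|W(\ffi)(\omega) - W(\psi)(\omega)| \le 2\biggl[\sup_{n \ge 1}\tfrac{1}{n}\sum_{k=0}^{n-1}d^p(\ffi(T^k\omega), \psi(T^k\omega))\biggr]^{1/p}.
$$
The classical Birkhoff maximal ergodic inequality applied to the $L^1$ function $\omega \mapsto d^p(\ffi(\omega), \psi(\omega))$ shows that $f\mapsto \sup_n \tfrac{1}{n}\sum_{k=0}^{n-1}f\circ T^k$ is continuous at $0$ from $L^1(\Omega;\bR)$ into $\cM(\Omega;\bR)$. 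Hence $\ffi \mapsto W(\ffi)$ is continuous from $L^p(\Omega;M)$ into $\cM(\Omega;\bR)$, vanishes on the dense class of simple functions, and therefore vanishes everywhere. Together with (ii) this forces $d(\beta(\mu_n^\ffi(\omega)), \Gamma(\ffi)(\omega))\to 0$ a.e., proving (i).

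Finally, when $T$ is ergodic, $\cI$ is $\bP$-trivial, so for simple $\ffi$ one has $w_{i,\infty} \equiv \bP(A_i)$, whence $\mu_\infty^\ffi = \ffi_*\bP$ is constant in $\omega$ and $\Gamma(\ffi) \equiv \beta(\ffi_*\bP) = E^\beta(\ffi)$. The $\bd_p$-continuity of $\Gamma$ (property (iii)) together with continuity of $E^\beta$ (Proposition \ref{P-2.2}(1)) extends this to all $\ffi \in L^p(\Omega;M)$. The central obstacle is the maximal-inequality step in the a.e.\ convergence, which is the exact analogue of the Banach-style argument used for martingale convergence in Theorem \ref{T:martingale}, but with the Birkhoff maximal inequality replacing Doob's.
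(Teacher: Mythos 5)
Your proposal is correct, and its skeleton coincides with the paper's: treat simple functions via the scalar Birkhoff theorem plus Lemma \ref{L-1.3}, obtain (ii) and (iii) by density through the uniform contraction of Lemma \ref{L-4.1}, and transfer a.e.\ convergence to general $\ffi$ through a maximal ergodic inequality applied to $d^p(\ffi,\psi_l)$. Two of your sub-steps are executed differently from the paper, and both versions are sound. For (i), the paper runs an explicit quantitative Borel--Cantelli argument: it fixes $\eps_k=k^{-2}$, builds exceptional sets $\widetilde N_{\eps_k}$ from its maximal inequality (4.3) together with a.e.\ convergence for a nearby simple function, and concludes that $\{\beta(\mu_n^\ffi(\omega))\}_n$ is a.e.\ Cauchy; you instead package the same maximal inequality as continuity at $0$ of $f\mapsto\sup_n\frac1n\sum_{k=0}^{n-1}f\circ T^k$ from $L^1(\Omega;\bR)$ into $\cM(\Omega;\bR)$ and invoke the Banach principle for the oscillation functional $W$, exactly as the paper itself does in proving Theorem \ref{T:martingale}; the two are interchangeable, yours trading the exceptional-set bookkeeping for the abstract continuity argument. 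For the $T$-invariance of $\Gamma(\ffi)$, your coupling bound $d_p^W(\mu_n^\ffi(T\omega),\mu_n^\ffi(\omega))^p\le\frac1n\,d^p(\ffi(T^n\omega),\ffi(\omega))$, whose $\bd_p$-norm is $O(n^{-1/p})$ by $T$-invariance of $\bP$, is genuinely different from the paper's route (which verifies invariance for simple functions, where the Birkhoff limits $\xi_i$ are $T$-invariant, and then approximates using (iii)); your estimate handles all $\ffi\in L^p(\Omega;M)$ in one stroke and is arguably cleaner. Two points you leave implicit are harmless: $W(\ffi)<\infty$ a.e.\ (needed for $W(\ffi)\in\cM(\Omega;\bR)$, and immediate from your displayed bound with $\psi=\1_\Omega x_0$), and surjectivity of $\Gamma$ onto $\{\ffi:\ffi\circ T=\ffi\}$ (if $\ffi\circ T=\ffi$ then $\mu_n^\ffi(\omega)=\delta_{\ffi(\omega)}$, so $\Gamma(\ffi)=\ffi$), which the paper also leaves unstated.
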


\begin{proof}
Let $\ffi,\psi\in L^p(\Omega;M)$. Applying the maximal ergodic theorem to the function
$f(\omega):=d^p(\ffi(\omega),\psi(\omega))\in L^1(\Omega;\bR)$ we have for every $\lambda>0$,
$$
\bP\Biggl\{\omega\in\Omega:\sup_{n\ge1}{1\over n}\sum_{k=0}^{n-1}
d^p(\ffi(T^k\omega),\psi(T^k\omega))>\lambda\Biggr\}
\le{1\over\lambda}\,\bd_p^p(\ffi,\psi),
$$
which together with \eqref{F-4.2} implies that
\begin{equation}\label{F-4.3}
\bP\biggl\{\omega\in\Omega:\sup_{n\ge1}
d^p(\beta(\mu_n^\ffi(\omega)),\beta(\mu_n^\psi(\omega)))>\lambda\biggr\}
\le{1\over\lambda}\,\bd_p^p(\ffi,\psi).
\end{equation}

Now, assume that $\psi$ is a simple function so that $\psi=\sum_{i=1}^K\1_{A_i}x_i$ with
$x_i\in M$ and a measurable partition $\{A_1,\dots,A_K\}$ of $\Omega$. Note that we can
write
\begin{align}\label{F-4.4}
\mu_n^\psi(\omega)=\sum_{i=1}^K\Biggl({1\over n}\sum_{k=0}^{n-1}
\1_{A_i}(T^k\omega)\Biggr)\delta_{x_i}.
\end{align}
By the usual Birkhoff ergodic theorem, there are $T$-invariant functions $\xi_i\in
L^1(\Omega;{\Bbb R})$ ($1\le i\le K$) such that
$\frac{1}{n}\sum_{k=0}^{n-1}\1_{A_i}(T^k\omega)\to\xi_i(\omega)$ a.e.\ as $n\to\infty$.
Therefore, with $\mu_\infty^\psi(\omega):=\sum_{i=1}^K\xi_i(\omega)\delta_{x_i}$ we have
\begin{equation}\label{F-4.5}
d(\beta(\mu_n^\psi(\omega)),\beta(\mu_\infty^\psi(\omega)))
\le d_p^W(\mu_n^\psi(\omega),\mu_\infty^\psi(\omega))\longrightarrow0
\ \ \mbox{a.e.\quad as $n\to\infty$}
\end{equation}
from Lemma \ref{L-1.3}. Choose simple functions $\psi_l$ ($l\in\bN$) such that
$d(\ffi(\omega),\psi_l(\omega))\to0$ a.e.\ and $\bd_p(\ffi,\psi_l)\to0$ as $l\to\infty$.
For every $\eps\in(0,1)$ choose a $\psi_l$ such that $\bd_p(\ffi,\psi_l)<\eps^2$.
Furthermore, from \eqref{F-4.5} with $\psi=\psi_l$, one can choose an
$n_\eps\in\bN$ and an $N_\eps\in\cA$ such that $\bP(N_\eps)<\eps$ and
$d(\beta(\mu_n^{\psi_l}(\omega)),\beta(\mu_\infty^{\psi_l}(\omega))\le\eps$ for
$\omega\in\Omega\setminus N_\eps$. Let
$$
\widetilde N_\eps:=\biggl\{\omega\in\Omega:\sup_{n\ge1}
d(\beta(\mu_n^\ffi(\omega)),\beta(\mu_n^{\psi_l}(\omega)))>\eps^p\biggr\}
\cup N_\eps.
$$
Then, from \eqref{F-4.3} with $\psi=\psi_l$ and $\lambda=\eps^p$, one has
$\bP(\widetilde N_\eps)<2\eps$ and for every $\omega\in\Omega\setminus\widetilde N_\eps$
and $n\ge n_\eps$, $d(\beta(\mu_n^\ffi(\omega)),\beta(\mu_\infty^{\psi_l}(\omega)))\le2\eps$,
so that
$$
d(\beta(\mu_n^\ffi(\omega)),\beta(\mu_m^\ffi(\omega)))\le4\eps,
\qquad\omega\in\Omega\setminus\widetilde N_\eps,\ n,m\ge n_\eps.
$$
Letting $\eps_k:=k^{-2}$ and $N:=\limsup_{k\to\infty}\widetilde N_{\eps_k}$, one has
$\bP(N)=0$ by the Borel-Cantelli lemma and
$$
\lim_{n,m\to\infty}d(\beta(\mu_n^\ffi(\omega)),\beta(\mu_m^\ffi(\omega)))=0,
\qquad\omega\in\Omega\setminus N,
$$
which implies that there exists a strongly measurable function
$\Gamma(\ffi):\Omega\to M$ for which property (i) holds, though
$\Gamma(\ffi)\in L^p(\Omega;M)$ as well as $\Gamma(\ffi)\circ
T=\Gamma(\ffi)$ will be proved below.

To prove (ii), note by Lemma \ref{L-4.1} that
\begin{align}
\bd_p(\beta(\mu_n^\ffi),\beta(\mu_m^\ffi))
\le2\bd_p(\ffi,\psi_l)+\bd_p(\beta(\mu_n^{\psi_l}),\beta(\mu_m^{\psi_l})). \label{F-4.6}
\end{align}
For every $\eps>0$ choose a $\psi_l$ such that $\bd_p(\ffi,\psi_l)<\eps$, and write
$\psi_l=\sum_{j=1}^K\1_{A_j}x_j$. Then it follows from \eqref{F-4.4} and Lemma \ref{L-1.3}
that with $\Delta:=\diam\{x_1,\dots,x_K\}$,
$$
\bd_p^p(\beta(\mu_n^{\psi_l}),\beta(\mu_m^{\psi_l}))
\le\Delta^p\sum_{i=1}^K\Bigg\|{1\over n}\sum_{k=0}^{n-1}\1_{A_i}\circ T^k
-{1\over m}\sum_{k=0}^{m-1}\1_{A_i}\circ T^k\Bigg\|_1.
$$
Hence, by the usual mean ergodic theorem, one can choose an $n_\eps\in\bN$ such that
$\bd_p(\beta(\mu_n^{\ffi_l}),\beta(\mu_m^{\ffi_l}))<\eps$ for all $n,m\ge n_\eps$. Therefore,
by \eqref{F-4.6}, $\bd_p(\beta(\mu_n^\ffi),\beta(\mu_m^\ffi))<3\eps$ for all $n,m\ge n_\eps$.
By property (i) and Fatou's lemma, we have
\begin{align*}
\int_\Omega d^p(\beta(\mu_n^\ffi(\omega)),\Gamma(\ffi)(\omega))\,d\bP(\omega)
\le(3\eps)^p,\qquad n\ge n_\eps,
\end{align*}
which implies that $\Gamma(\ffi)\in L^p(\Omega;M)$ and property (ii) holds.

When $\ffi,\psi\in L^p(\Omega;M)$, property (iii) follows from Lemma \ref{L-4.1} and
Fatou's lemma since property (i) implies that
$d(\beta(\mu_n^\ffi(\omega)),\beta(\mu_n^\psi(\omega)))\to
d(\Gamma(\ffi)(\omega),\Gamma(\psi)(\omega))$ a.e.\ as $n\to\infty$.

Next, we confirm that $\Gamma(\ffi)$ is $T$-invariant. For a simple function
$\psi$ it follows from \eqref{F-4.4} and Lemma \ref{L-1.3} that
$\Gamma(\psi)(T\omega)=\Gamma(\psi)(\omega)$ a.e. For an arbitrary
$\ffi\in L^p(\Omega;M)$, choose a sequence $\psi_l$ as above. Since
$\Gamma(\psi_l)\circ T=\Gamma(\psi_l)$ as verified just above, we have
$$
\bd_p(\Gamma(\ffi),\Gamma(\ffi)\circ T)
\le2\bd_p(\Gamma(\ffi),\Gamma(\psi_l))
\le2\bd_p(\ffi,\psi_l)\longrightarrow0\quad\mbox{as $l\to\infty$}
$$
thanks to property (iii). Hence $\Gamma(\ffi)=\Gamma(\ffi)\circ T$, as desired.

Finally, assume that $T$ is ergodic. For a simple function $\psi=\sum_{i=1}^K\1_{A_i}x_i$,
since $\frac{1}{n}\sum_{k=0}^{n-1}\1_{A_i}(T^k\omega)\to\bP(A_i)$ a.e.\ as $n\to\infty$
due to the ergodicity of $T$, it follows from \eqref{F-4.4} that
$\Gamma(\psi)=\beta\bigl(\sum_{i=1}^K\bP(A_i)\delta_{x_i}\bigr)=\beta(\psi_*\bP)
=E^\beta(\psi)$. For general $\ffi\in L^p(\Omega;M)$ choose simple functions $\psi_l$
such that $\bd_p(\ffi,\psi_l)\to0$ as $l\to\infty$. Since
$\bd_p(\Gamma(\ffi),\Gamma(\psi_l))\to0$ by (iii) and
$d(E^\beta(\ffi),E^\beta(\psi_l))\to0$ by Proposition \ref{P-2.2},
$\Gamma(\ffi)=E^\beta(\ffi)$ follows.
\end{proof}

When  $(\Omega,\cA)$ is a standard Borel space and $T$ is not
necessarily ergodic, the limit $\Gamma(\ffi)$ in Theorem \ref{T-4.2}
can be specified in terms of the $\beta$-conditional expectation of
$\ffi$ as follows.

\begin{thm}\label{T-4.3}
In Theorem $3.2$ assume that $(\Omega,\cA)$ is a standard Borel
space. Let $\cI:=\{A\in\cA:T^{-1}A=A\}$, the sub-$\sigma$-algebra
consisting of $T$-invariant sets. Then for every $\ffi\in
L^p(\Omega;M)$, $\Gamma(\ffi)$ is the $\beta$-conditional
expectation $E_\cI^\beta(\ffi)$ of $\ffi$ with respect to $\cI$
$($see Definition $\ref{D-2.8})$.
\end{thm}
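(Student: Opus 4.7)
The plan is to verify $\Gamma(\ffi) = E_\cI^\beta(\ffi)$ first on $M$-valued simple functions by a direct computation using the classical Birkhoff ergodic theorem, and then to extend by a density argument. By Theorem \ref{T-4.2}(iii) the map $\Gamma$ is $\bd_p$-Lipschitz on $L^p(\Omega;M)$, and by Theorem \ref{T-2.9}(2) so is $E_\cI^\beta$; hence the set $\{\ffi\in L^p(\Omega;M):\Gamma(\ffi)=E_\cI^\beta(\ffi)\}$ is $\bd_p$-closed. Since the $M$-valued simple functions are $\bd_p$-dense in $L^p(\Omega;M)$ (Lemma \ref{L-1.1}), it suffices to handle the simple case.

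So I would fix $\ffi=\sum_{i=1}^K\1_{A_i}x_i$ with $\{A_1,\dots,A_K\}$ a measurable partition of $\Omega$ and $x_i\in M$, and let $(\bP_\omega)_{\omega\in\Omega}$ be the disintegration of $\bP$ with respect to $\cI$ (available by Lemma \ref{L-2.5}, using the standard Borel hypothesis). Setting $c_i^n(\omega):=\frac{1}{n}\sum_{k=0}^{n-1}\1_{A_i}(T^k\omega)$, one has $\mu_n^\ffi(\omega)=\sum_{i=1}^K c_i^n(\omega)\delta_{x_i}$, while $\ffi_*\bP_\omega=\sum_{i=1}^K\bP_\omega(A_i)\delta_{x_i}$ with $\bP_\omega(A_i)=E_\cI[\1_{A_i}](\omega)$ a.e., by Lemma \ref{L-2.5}(ii). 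The classical Birkhoff ergodic theorem applied to each $\1_{A_i}$ then yields $c_i^n(\omega)\to E_\cI[\1_{A_i}](\omega)=\bP_\omega(A_i)$ for $\bP$-a.e.\ $\omega$ and every $i\in\{1,\dots,K\}$ (a finite union of null sets). By Lemma \ref{L-1.3} with $\Delta:=\diam\{x_1,\dots,x_K\}$,
$$
d_p^W(\mu_n^\ffi(\omega),\ffi_*\bP_\omega)
\le\Delta\Biggl[\frac{1}{2}\sum_{i=1}^K|c_i^n(\omega)-\bP_\omega(A_i)|\Biggr]^{1/p}
\longrightarrow 0\quad\text{a.e.,}
$$
and the $p$-contractivity \eqref{F-2.1} of $\beta$ gives $d(\beta(\mu_n^\ffi(\omega)),\beta(\ffi_*\bP_\omega))\to 0$ a.e. Combined with Theorem \ref{T-4.2}(i), which asserts $d(\beta(\mu_n^\ffi(\omega)),\Gamma(\ffi)(\omega))\to 0$ a.e., uniqueness of limits in $M$ forces $\Gamma(\ffi)(\omega)=\beta(\ffi_*\bP_\omega)=E_\cI^\beta(\ffi)(\omega)$ for $\bP$-a.e.\ $\omega$, establishing the claim for simple $\ffi$.

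There is no real obstacle here: once one recognizes that the formula $E_\cI^\beta(\ffi)(\omega)=\beta(\ffi_*\bP_\omega)$ is designed precisely to match the limit of $\beta$ applied to the empirical measures in the simple case, the argument is essentially bookkeeping. What is mildly delicate is only the interplay of null sets (those for the Birkhoff convergence of each of the $K$ indicators and that governing the a.e.\ identification $\bP_\omega(A_i)=E_\cI[\1_{A_i}](\omega)$), but since only finitely many are involved this is handled trivially. The $\bd_p$-contractivity of $\Gamma$ and $E_\cI^\beta$ then supplies the density extension to arbitrary $\ffi\in L^p(\Omega;M)$ without further work.
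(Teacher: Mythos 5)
Your proposal is correct and follows essentially the same route as the paper's own proof: treat a simple function $\ffi=\sum_{i=1}^K\1_{A_i}x_i$ first, apply the classical Birkhoff ergodic theorem to the indicators $\1_{A_i}$ together with the identification $\bP_\omega(A_i)=E_\cI[\1_{A_i}](\omega)$ from Lemma \ref{L-2.5}\,(ii), use Lemma \ref{L-1.3} and the $p$-contractivity \eqref{F-2.1} to get $\beta(\mu_n^\ffi(\omega))\to\beta(\ffi_*\bP_\omega)=E_\cI^\beta(\ffi)(\omega)$ a.e., and identify this limit with $\Gamma(\ffi)$ via Theorem \ref{T-4.2}\,(i). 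Your density step (phrased as closedness of the agreement set under the two contractive maps, Theorem \ref{T-4.2}\,(iii) and Theorem \ref{T-2.9}\,(2)) is exactly the paper's final approximation argument in slightly different words.
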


\begin{proof}
Let $(\bP_\omega)_{\omega\in\Omega}$ be a disintegration of $\bP$ with respect to $\cI$, as
stated in Lemma \ref{L-2.5}. First, let $\psi$ be a simple function as
$\psi=\sum_{i=1}^K\1_{A_i}\delta_{x_i}$ with a measurable partition $\{A_1,\dots,A_K\}$ of
$\Omega$. From \eqref{F-4.4} we write
\begin{align}\label{F-4.7}
\beta(\mu_n^\psi(\omega))=\beta\Biggl(\sum_{i=1}^K\Biggl({1\over n}\sum_{k=0}^{n-1}
\1_{A_i}(T^k\omega)\Biggr)\delta_{x_i}\Biggr),\qquad n\in\bN.
\end{align}
Moreover, note that $\bP_\omega(A_j)$ is the usual conditional expectation of $\1_{A_j}$ with
respect to $\cI$ (see Lemma \ref{L-2.5}). Hence the usual individual ergodic theorem gives
$$
\lim_{n\to\infty}{1\over n}\sum_{k=0}^{n-1}\1_{A_i}(T^k\omega)
=\bP_\omega(A_i)\ \ \mbox{a.e.}
$$
Moreover, as in \eqref{F-2.3} we write
\begin{align}\label{F-4.8}
E_\cI^\beta(\psi)(\omega)=\beta\Biggl(\sum_{i=1}^K\bP_\omega(A_i)\delta_{x_i}\Biggr).
\end{align}
Therefore, from \eqref{F-4.7} and \eqref{F-4.8} together with \eqref{F-2.1} we have
\begin{align*}
d(\beta(\mu_n^\psi(\omega),E_\cI^\beta(\psi)(\omega))
&\le d_p^W\Biggl(\sum_{i=1}^K\Bigg({1\over n}\sum_{k=0}^{n-1}\1_{A_i}(T^k\omega)
\Biggr)\delta_{x_i},\sum_{i=1}^K\bP_\omega(A_i)\delta_{x_i}\Biggr) \\
&\le\Delta\Biggl[\sum_{i=1}^K\bigg|{1\over n}\sum_{k=0}^{n-1}
\1_{A_i}(T^k\omega)-\bP_\omega(A_i)\bigg|\Biggr]^{1/p}\longrightarrow\ 0\ \ \mbox{a.e.},
\end{align*}
where we have used Lemma \ref{L-1.3}. This implies that $\Gamma(\psi)=E_\cI^\beta(\psi)$.

For general $\ffi\in L^p(\Omega;M)$ choose simple functions $\psi_l$ such that
$\bd_p(\ffi,\psi_l)\to0$ as $l\to\infty$. By the above case,
$\Gamma(\psi_l)=E_\cI^\beta(\psi_l)$ for all $l$. Since Theorems \ref{T-4.2}\,(iii) and
\ref{T-2.9}\,(2) give $\bd_p(\Gamma(\ffi),\Gamma(\psi_l))\to0$ and
$\bd_p(E_\cI^\beta(\ffi),E_\cI^\beta(\psi_l))\to0$, we obtain $\Gamma(\ffi)=E_\cI^\beta(\ffi)$.
\end{proof}

\begin{thm}\label{T:order}
Assume that $M$ is equipped with a closed partial order and $\beta$
is monotone. Then $\Gamma$ is monotone, that is, for $\ffi,\psi\in
L^p(\Omega;M)$, $\ffi\le\psi$ $($i.e.,
$\ffi(\omega)\le\psi(\omega)$ a.e.$)$ implies
$\Gamma(\ffi)\le\Gamma(\psi)$.
\end{thm}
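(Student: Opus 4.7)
The plan is to reduce monotonicity of $\Gamma$ to monotonicity of $\beta$ applied pointwise to empirical measures, and then pass to the limit using the fact that the partial order on $M$ is closed.

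First I would establish that if $\ffi \le \psi$ a.e., then for $\bP$-a.e.\ $\omega$ and every $k \ge 0$ one has $\ffi(T^k\omega) \le \psi(T^k\omega)$. Indeed, let $N := \{\omega : \ffi(\omega) \not\le \psi(\omega)\}$; this is a $\bP$-null set (up to measure-zero issues), and since $T$ preserves $\bP$, each $T^{-k}N$ is also null, so $\widetilde{N} := \bigcup_{k \ge 0} T^{-k}N$ is null. For $\omega \in \Omega \setminus \widetilde{N}$ and every $k \ge 0$, $\ffi(T^k\omega) \le \psi(T^k\omega)$.

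Next, for such $\omega$ I would show that the empirical measures satisfy $\mu_n^\ffi(\omega) \le \mu_n^\psi(\omega)$ in the stochastic order on $\cP(M)$. Both measures have the form $\frac{1}{n}\sum_{k=0}^{n-1} \delta_{\cdot}$ with the same equal weights, so by the characterization recalled from \cite{Law,HLL} in the introduction, it suffices to exhibit a permutation $\sigma$ on $\{0,1,\dots,n-1\}$ such that $\ffi(T^k\omega) \le \psi(T^{\sigma(k)}\omega)$ for all $k$; the identity permutation works since $\ffi(T^k\omega) \le \psi(T^k\omega)$ for each $k$. Monotonicity of $\beta$ then gives
$$
\beta(\mu_n^\ffi(\omega)) \le \beta(\mu_n^\psi(\omega)), \qquad \omega \in \Omega \setminus \widetilde{N},\ n \in \bN.
$$

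Finally I would pass to the limit $n \to \infty$. By Theorem \ref{T-4.2}(i), $\beta(\mu_n^\ffi)(\omega) \to \Gamma(\ffi)(\omega)$ and $\beta(\mu_n^\psi)(\omega) \to \Gamma(\psi)(\omega)$ outside a further $\bP$-null set. Since the partial order on $M$ is closed, the inequality $\beta(\mu_n^\ffi(\omega)) \le \beta(\mu_n^\psi(\omega))$ persists in the limit, yielding $\Gamma(\ffi)(\omega) \le \Gamma(\psi)(\omega)$ a.e., i.e., $\Gamma(\ffi) \le \Gamma(\psi)$.

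There is no real obstacle here; the only mild subtlety is the bookkeeping of null sets (combining the null set where $\ffi \le \psi$ fails, its $T$-preimages, and the null sets from a.e.\ ergodic convergence), all of which aggregate to a single $\bP$-null set since the union is countable. The key inputs are exactly the three ingredients the setup provides: the combinatorial characterization of $\le$ on equal-weight empirical measures, monotonicity of $\beta$, and closedness of the order.
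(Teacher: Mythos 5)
Your proposal is correct and follows essentially the same route as the paper's proof: order the empirical measures $\mu_n^\ffi(\omega)\le\mu_n^\psi(\omega)$, apply monotonicity of $\beta$, and pass to the limit using closedness of the partial order together with the a.e.\ convergence in Theorem \ref{T-4.2}\,(i). The paper states these steps tersely, while you usefully spell out the two details it leaves implicit (the null-set bookkeeping via $\bP$-invariance of $T$, and the permutation criterion justifying the stochastic-order comparison of the equal-weight empirical measures).
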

\begin{proof} Let $\ffi,\psi\in L^p(\Omega;M)$ and assume that
$\ffi(\omega)\leq \psi(\omega)$ a.e. Then
$\mu_n^\ffi(\omega)\leq \mu_n^\psi(\omega)$ a.e.\ and hence
$\beta(\mu_n^\ffi(\omega))\leq \beta(\mu_n^\psi(\omega))$ a.e.\ for
all $n$. By closedness of the partial order, letting $n\to\infty$
gives $\Gamma(\ffi)(\omega)\le\Gamma(\psi)(\omega)$ a.e. (When
$(\Omega,\cA)$ is a standard Borel space, the result also follows
from Theorems \ref{T-2.9}\,(5) and \ref{T-4.3}.)
\end{proof}

\begin{example}\label{E-4.5}\rm
(a)\enspace Consider the space $\bM_N$ of all $N\times N$ complex matrices with any norm
$|||\cdot|||$ (typically, the Hilbert-Schmidt norm), so $(\bM_N,d)$ with $d(X,Y):=|||X-Y|||$
is a Banach space. One can define the arithmetic mean map
$\cA:\cP^1(\bM_N,|||\cdot|||)\to\bM_N$ by
\begin{equation}\label{F-7}
\cA(\mu):=\int_{\bM_N}X\,d\mu(X),\qquad X\in\bM_N.
\end{equation}
Let $p\in[1,\infty)$ and $\mu,\nu\in\cP^p(\bM_N)$. For every $\pi\in\Pi(\mu,\nu)$ we have
\begin{align*}
\biggl[\int_{\bM_N\times\bM_N}|||X-Y|||^p\,d\pi(X,Y)\biggr]^{1/p}
&\ge\int_{\bM_N\times\bM_N}|||X-Y|||\,d\pi(X,Y) \\
&\ge\bigg|\bigg|\bigg|\int_{\bM_N\times\bM_N}(X-Y)\,d\pi(X,Y)\bigg|\bigg|\bigg| \\
&=|||\cA(\mu)-\cA(\nu)|||,
\end{align*}
which implies that $|||\cA(\mu)-\cA(\nu)|||\le d_p^W(\mu,\nu)$. Thus, $\cA$ restricted on
$\cP^p(\bM_N,|||\cdot|||)$ is a $p$-contractive barycentric map. For every
$\ffi\in L^p(\Omega;\bM_N)$ where $1\le p<\infty$, since
$$
\cA(\mu_n^\ffi(\omega))={1\over n}\sum_{k=0}^{n-1}\ffi(T^k\omega),\qquad\omega\in\Omega,
$$
Theorem \ref{T-4.2} in this case is the classical individual and mean ergodic theorems for
$\bM_N$-valued functions, where $\Gamma(\ffi)=E_\cI(\ffi)$, the usual conditional expectation
of $\ffi$ with respect to $\cI$.

(b)\enspace Let $\mathbb{P}=\mathbb{P}(\cH)$ be the
set of positive invertible operators on a Hilbert space $\cH$. A
natural metric on $\bbP$ is the Thompson metric $d_\T(A,B):=\|\log
A^{-1/2}BA^{-1/2}\|$, where $\|\cdot\|$ is the operator norm. Note
that $(\bbP,d_\T)$ is a complete metric space.  It turns out
\cite{LL14,LL16} that there exists a contractive barycentric map
$G:{\mathcal P}^1({\mathbb P})\to {\mathbb P}$, called the Karcher
barycenter, which is uniquely determined by
$$
X=G\left(\frac{1}{n}\sum_{j=1}^{n}\delta_{A_{j}}\right) \
\Longleftrightarrow\ \sum_{j=1}^n \log(X^{-1/2}A_j X^{-1/2})=0
$$
for all $n\in {\Bbb N}$ and $(A_1,\dots,A_n)\in
{\Bbb P}^n$. The L\"owner ordering $A\leq B$ is defined if $B-A$ is
a positive semidefinite operator on $\cH$, which is a closed partial
order on $\mathbb{P}$. It turns out \cite{Law} that the Karcher
barycenter is monotone for the L\"owner ordering.
  For $\ffi\in L^p(\Omega,\bbP)$ with $1\le
p<\infty$, note that
$$
G(\mu_n^\ffi(\omega))=G\Biggl({1\over
n}\sum_{k=0}^{n-1}\delta_{\ffi(T^k\omega)}\Biggr)
=G(\ffi(\omega),\ffi(T\omega),\dots,\ffi(T^{n-1}\omega)),\qquad\omega\in\Omega,
$$
which is the Karcher mean  of $\ffi(T^k\omega)$ ($0\le k\le n-1$).
Theorem \ref{T-4.2} says that
$$
\lim_{n\to\infty}G(\ffi,\ffi\circ T,\dots,\ffi\circ
T^{n-1})=\Gamma(\ffi) \ \,\mbox{a.e.\ and in metric $\bd_p$},
$$
where $\Gamma$ is a map from $L^p(\Omega;\bbP)$ onto $\{\ffi\in
L^p(\Omega,\bbP):\ffi\circ T=\ffi\}$. When $(\Omega,\cA)$ is a standard Borel space,
it follows from Theorem \ref{T-4.3} that $\Gamma(\ffi)=E_\cI^G(\ffi)$, the
$G$-conditional expectation of $\ffi$ with respect to $\cI$. By Theorem \ref{T:order}
the monotonicity of $\Gamma$ follows from that of $G$. Moreover, $\Gamma$ is monotone
and $\Gamma(\ffi^{-1})=\Gamma(\ffi)^{-1}$ as seen from $G(\mu^{-1})=G(\mu)^{-1}$ where
$\mu^{-1}$ is the push-forward of $\mu$ by $A\mapsto A^{-1}$ on $\bP$.
\end{example}

\section{Barycentric metric spaces and semiflows}
In this section, let $T$ be, as in Section 4, a $\bP$-preserving
measurable transformation on $(\Omega,\cA,\bP)$. Let $1\le p<\infty$
be fixed and denote by ${\mathcal C}_p(M)$ the set of all
$p$-contractive barycentric maps on the complete metric space $M$.
We note that  a metric space equipped with a contractive barycentric map is called
a \emph{barycentric metric space} and that there are many (distinct)
contractive barycentric maps on a metric space. For every $\beta\in
{\mathcal C}_p(M)$, let $\Gamma_\beta:L^p(\Omega;M)\to \{\ffi\in
L^p(\Omega;M):\ffi\circ T=\ffi\}$ be the map given in Theorem
\ref{T-4.2}. This naturally defines a two-variable map
\begin{eqnarray}\label{E:Cont}\Gamma:{\mathcal C}_p(M)\times L^p(\Omega;M)\to \{\ffi\in
L^p(\Omega;M):\ffi\circ T=\ffi\},\quad (\beta,\varphi)\mapsto
\Gamma_\beta(\varphi). \end{eqnarray} By Theorem \ref{T-4.2} (iii),
$\Gamma$ is continuous in variable $\varphi\in L^p(\Omega;M)$.  We
will construct a complete metric on ${\mathcal C}_p(M)$ such that
$\Gamma$ is continuous on ${\mathcal C}_p(M)\times L^p(\Omega;M)$
with respect to the product metric.

For ${\bf x}=(x_{1},\dots,x_{n})\in M^n$ and $\beta\in {\mathcal
C}_p(M)$, we write $\Delta({\bf x})$ for the
diameter of $\{x_1,\dots,x_n\}$, and $\beta({\bf
x}):=\beta\left(\frac{1}{n}\sum_{j=1}^{n}\delta_{x_{j}}\right)$.

\begin{prop}\label{L:MD}For   $\beta_{1},\beta_{2}\in {\mathcal C}_p(M)$, define
$$d_p(\beta_{1},\beta_{2})=\sup\limits_{\begin{subarray}{c}{\bf x}\in M^{n}, n\in {\Bbb N}\\
\Delta({\bf x})\neq 0\end{subarray}} \frac{d(\beta_{1}({\bf x}),
\beta_{2}({\bf x}))}{\Delta({\bf x})}.
$$
Then  $d_p(\beta_1,\beta_2)\le1$ for all
$\beta_1,\beta_2\in\cC_p(M)$ and $d_p$ is a complete metric on
${\mathcal C}_p(M)$.
\end{prop}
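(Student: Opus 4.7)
The plan is to split the argument into three steps: first establish the universal bound $d_p(\beta_1,\beta_2)\le 1$, then verify the metric axioms, and finally prove completeness. For the bound, I would fix ${\bf x}=(x_1,\dots,x_n)$ with $\mu=\frac{1}{n}\sum_{j=1}^n\delta_{x_j}$ and $\Delta:=\Delta({\bf x})>0$, and exploit the identity $\beta_i(\delta_y)=y$ twice. Setting $y:=\beta_1(\mu)$, I would write
\[
d(\beta_1({\bf x}),\beta_2({\bf x}))
=d\bigl(\beta_2(\delta_{\beta_1(\mu)}),\beta_2(\mu)\bigr)
\le d_p^W(\delta_{\beta_1(\mu)},\mu)
=\Bigl[\tfrac{1}{n}\sum_{j=1}^n d(\beta_1(\mu),x_j)^p\Bigr]^{1/p},
\]
using $p$-contractivity of $\beta_2$ and the fact that the only coupling with a Dirac is the product. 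A second application of $p$-contractivity, this time to $\beta_1$, gives $d(\beta_1(\mu),x_j)=d(\beta_1(\mu),\beta_1(\delta_{x_j}))\le d_p^W(\mu,\delta_{x_j})\le\Delta$ for each $j$, so the right-hand side is bounded by $\Delta$. This two-step swap between contractivity and the Dirac identity is the key idea; I expect it to be the main conceptual obstacle.

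Symmetry, non-negativity, and the triangle inequality of $d_p$ are immediate from those of $d$. For separation, if $d_p(\beta_1,\beta_2)=0$ then $\beta_1$ and $\beta_2$ agree on every uniformly weighted finitely supported measure; since $\cP_0(M)$ is $d_p^W$-dense in $\cP^p(M)$ (noted in the introduction) and each $\beta_i$ is $1$-Lipschitz on $(\cP^p(M),d_p^W)$, continuous extension forces $\beta_1=\beta_2$ on all of $\cP^p(M)$.

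For completeness, given a $d_p$-Cauchy sequence $\{\beta_k\}\subset\cC_p(M)$, I would observe that $d(\beta_k({\bf x}),\beta_l({\bf x}))\le d_p(\beta_k,\beta_l)\,\Delta({\bf x})$ makes $\{\beta_k({\bf x})\}$ Cauchy in $M$, and define $\beta({\bf x})$ as its limit; this is consistent across tuples representing the same measure because each $\beta_k$ already is. Passing to the limit in $d(\beta_k(\mu),\beta_k(\nu))\le d_p^W(\mu,\nu)$ for $\mu,\nu\in\cP_0(M)$ makes $\beta$ a $1$-Lipschitz map on $(\cP_0(M),d_p^W)$, which I would extend uniquely to $\cP^p(M)$ using density of $\cP_0(M)$ and completeness of $M$; the identity $\beta(\delta_x)=x$ passes to the limit, so $\beta\in\cC_p(M)$. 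Finally, letting $l\to\infty$ in $d(\beta_k({\bf x}),\beta_l({\bf x}))/\Delta({\bf x})\le d_p(\beta_k,\beta_l)$ and taking the supremum in ${\bf x}$ yields $d_p(\beta_k,\beta)\le\limsup_l d_p(\beta_k,\beta_l)\to 0$, establishing convergence in $d_p$.
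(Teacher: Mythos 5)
Your proposal is correct and follows essentially the same route as the paper: the bound $d_p\le 1$ via two applications of $p$-contractivity combined with the Dirac identity $\beta_i(\delta_y)=y$, separation via denseness of $\cP_0(M)$ in $\cP^p(M)$ and $1$-Lipschitz continuity, and completeness via pointwise Cauchy limits using $d(\beta_k(\bx),\beta_l(\bx))\le d_p(\beta_k,\beta_l)\Delta(\bx)$. The only cosmetic difference is in the completeness step, where you define the limit map on $\cP_0(M)$ and extend by uniform continuity, while the paper shows directly that $\{\beta_k(\mu)\}$ is Cauchy for every $\mu\in\cP^p(M)$ by inserting a $\cP_0(M)$-approximant into the triangle inequality; both are routine variants of the same argument.
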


\begin{proof}
Let $z:=\beta_1({\bf x})$. We have
\begin{align*}
d(\beta_1({\bf x}),\beta_{2}({\bf x}))
&=d\Biggl(\beta_2(\delta_z),\beta_2\Biggl(\sum_{j=1}^n\delta_{x_j}\Biggr)\Biggr)
=d_p^W\Biggl(\delta_z,\sum_{j=1}^n\delta_{x_j}\Biggr) \\
&\le\Biggl[\frac{1}{n}\sum_{j=1}^{n}d^p(\beta_{1}({\bf x}),x_{j})\Biggr]^{1/p}
\leq\Biggl[\frac{1}{n^{2}}\sum_{j=1}^{n}\sum_{i=1}^{n}d^p(x_{i},x_{j})\Biggr]^{1/p}
\leq\Delta({\bf x}),
\end{align*}
and hence $d_p(\beta_{1},\beta_{2})$ is well defined with
$d_p(\beta_1,\beta_2)\le1$. It is straightforward to see that $d_p$
satisfies the metric properties. In particular,
$d_p(\beta_{1},\beta_{2})=0$ if and only if $\beta_1=\beta_2$, by
denseness of ${\mathcal P}_0(M)$ in ${\mathcal P}^p(M)$.

Assume that $\{\beta_k\}$ is a Cauchy sequence in ${\mathcal
C}_p(M)$. Let $\mu\in {\mathcal P}^p(M)$ and let  $\ve>0$. By
denseness of ${\mathcal P}_{0}(M)$ in ${\mathcal P}^p(M)$,  one can
find an $\bx=(x_1,\dots,x_n)\in M^n$ such that $d_p^W(\mu,\mu_0)<\ve$,
where $\mu_0:=\frac{1}{n}\sum_{j=1}^n\delta_{x_j}$. For every
$k,l\in\bN$ one has
\begin{align*}
d(\beta_k(\mu),\beta_l(\mu))
&\le d(\beta_k(\mu),\beta_k(\bx))+d(\beta_k(\bx),\beta_l(\bx))+d(\beta_l(\bx),\beta_l(\mu)) \\
&\le d(\beta_k(\bx),\beta_l(\bx))+ 2d_p^W(\mu, \mu_0)\\
 &\le d(\beta_k(\bx),\beta_l(\bx))+2\ve.
\end{align*}
There is a $k_0\in\bN$ such that $d(\beta_k(\bx),\beta_l(\bx))\le
d_p(\beta_k,\beta_l)\Delta({\bf x})\le\ve$ for all $k,l\ge k_0$.
Hence $d(\beta_k(\mu),\beta_l(\mu))\le3\ve$ for all $k,l\ge k_0$, so
$\{\beta_k(\mu)\}$ is Cauchy in $M$. Therefore, one can define
$\beta:\cP^p(M)\to M$ by
$$
\beta(\mu):=\lim_{k\to\infty}\beta_k(\mu)\in M.
$$
For every $x\in M$, since $\beta_k(\delta_x)=x$ for all $k$, we have
$\beta(\delta_x)=x$. For every $\mu,\nu\in\cP^p(M)$,
$$
d(\beta(\mu),\beta(\nu))=\lim_{k\to \infty}d(\beta_k(\mu),\beta_k(\nu))\le d_p^W(\mu,\nu).
$$
Hence $\beta\in\cC_p(M)$.

Next, we show that $d_p(\beta_k,\beta)\to0$. For every $\ve>0$
choose a $k_0\in\bN$ such that $d_p(\beta_k,\beta_l)\le\ve$ for all
$k,l\ge k_0$. For any $n\in\bN$ and $\bx\in M^n$ with
$\Delta(\bx)>0$,
$$
{d(\beta_k(\bx),\beta_l(\bx))\over\Delta(\bx)}\le
d_p(\beta_k,\beta_l)\le\ve, \qquad k,l\ge k_0.
$$
Since $d(\beta_l(\bx),\beta(\bx))\to0$ as $l\to\infty$, one has $
d(\beta_k(\bx),\beta(\bx))/\Delta(\bx)\le\ve$ for $k\ge k_0,$
which implies that $d_p(\beta_k,\beta)\le\ve$ for all $k\ge k_0$.
Hence $d_p(\beta_k,\beta)\to0$.
\end{proof}

\begin{thm}\label{T:gamma}
The map $\Gamma$ is continuous on $\cC_p(M)\times L^p(\Omega;M)$
with respect to the product metric of $d_p$ and $\bd_p$, that is,
for sequences $\{\beta_k\}$ in $\cC_p(M)$ and $\{\ffi_k\}$ in
$L^p(\Omega;M)$, if $\beta_k\to\beta\in\cC_p(M)$ in $d_p$ and
$\ffi_k\to\ffi\in L^p(\Omega;M)$ in $\bd_p$, then
$\Gamma_{\beta_k}(\ffi_k)\to\Gamma_\beta(\ffi)$ in $\bd_p$ as
$k\to\infty$. In particular, if $T$ is ergodic, then
$\lim_{k\to\infty}E^{\beta_k}(\ffi_k)=E^\beta(\ffi)$.

Furthermore, assume that $(\Omega,\cA)$ is a standard Borel space.
If $\beta_k,\beta\in\cC_p(M)$ and $\beta_k\to\beta$ in $d_p$, then
for every $\ffi\in L^p(\Omega;M)$,
$$\lim_{k\to\infty}\Gamma_{\beta_k}(\varphi)(\omega)
=\Gamma_\beta(\varphi)(\omega)\ \ a.e.
$$
\end{thm}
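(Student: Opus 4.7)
The plan is to reduce the joint continuity of $\Gamma$ to continuity in $\beta$ alone with $\ffi$ fixed, and then attack the latter first on $M$-valued simple functions and finally on general $L^p$ functions by approximation. The reduction is immediate: since each $\Gamma_{\beta_k}$ is $\bd_p$-contractive by Theorem~\ref{T-4.2}(iii), the triangle inequality gives
\[
\bd_p(\Gamma_{\beta_k}(\ffi_k), \Gamma_\beta(\ffi)) \le \bd_p(\ffi_k, \ffi) + \bd_p(\Gamma_{\beta_k}(\ffi), \Gamma_\beta(\ffi)),
\]
and the first term vanishes. So everything boils down to showing $\bd_p(\Gamma_{\beta_k}(\ffi), \Gamma_\beta(\ffi)) \to 0$ when $d_p(\beta_k, \beta) \to 0$ and $\ffi$ is fixed.

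The workhorse will be the following estimate: for any finitely supported probability $\nu = \sum_{j=1}^K q_j \delta_{y_j}$ whose support has diameter $\Delta$,
\[
d(\beta_1(\nu), \beta_2(\nu)) \le d_p(\beta_1, \beta_2)\,\Delta.
\]
I will verify this by approximating $\nu$ with uniform empirical measures $\nu^{(n)} = \frac{1}{n}\sum_j m_j^{(n)}\delta_{y_j}$ whose integer weights satisfy $m_j^{(n)}/n \to q_j$: Lemma~\ref{L-1.3} yields $d_p^W(\nu^{(n)}, \nu) \to 0$, the underlying tuple of $\nu^{(n)}$ still has diameter at most $\Delta$ so the definition of $d_p$ bounds $d(\beta_1(\nu^{(n)}), \beta_2(\nu^{(n)}))$ by $d_p(\beta_1, \beta_2)\Delta$ directly, and $p$-contractivity of $\beta_1, \beta_2$ transports the bound to $\nu$. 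Applied to $\nu = \mu_n^\ffi(\omega)$ for a simple $\ffi = \sum_j \1_{A_j} x_j$, this yields $d(\beta_k(\mu_n^\ffi(\omega)), \beta(\mu_n^\ffi(\omega))) \le d_p(\beta_k, \beta)\,\diam\{x_1,\dots,x_K\}$ uniformly in $\omega, n$, and Theorem~\ref{T-4.2}(i) passes to the limit $n \to \infty$. For general $\ffi$, approximate by simple $\psi_l$ with $\bd_p(\ffi, \psi_l) \to 0$ and run the usual three-term triangle argument using Theorem~\ref{T-4.2}(iii), letting $k \to \infty$ first and then $l \to \infty$.

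The ergodic assertion is then immediate: since $\Gamma_\gamma(\ffi) \equiv E^\gamma(\ffi)$ is constant by Theorem~\ref{T-4.2}, the $\bd_p$-convergence becomes $d$-convergence of constants. For the almost everywhere assertion under the standard Borel hypothesis, Theorem~\ref{T-4.3} identifies $\Gamma_\gamma(\ffi)(\omega) = \gamma(\ffi_*\bP_\omega)$ for $\gamma \in \{\beta, \beta_k : k \in \bN\}$ and $\bP$-a.e.\ $\omega$, where $(\bP_\omega)$ is a disintegration of $\bP$ with respect to $\cI$. By Lemma~\ref{L-2.7}(1), $\ffi_*\bP_\omega \in \cP^p(M)$ for a.e.\ $\omega$, so the conclusion reduces to the pointwise continuity claim: $\beta_k(\nu) \to \beta(\nu)$ for every $\nu \in \cP^p(M)$ whenever $d_p(\beta_k, \beta) \to 0$. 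This follows by the same strategy: approximate $\nu$ by $\nu_m \in \cP_0(M)$ in $d_p^W$ (density stated in the introduction), apply the key estimate, and bound
\[
d(\beta_k(\nu), \beta(\nu)) \le 2d_p^W(\nu, \nu_m) + d_p(\beta_k, \beta)\,\diam(\supp \nu_m),
\]
choosing $m$ large first and then $k$ large.

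The main obstacle, I expect, will be the key estimate itself: the definition of $d_p$ is phrased only for uniform empirical measures, but the measures $\mu_n^\ffi(\omega)$ and $\ffi_*\bP_\omega$ that arise have essentially arbitrary weights. Bridging that gap via rational approximation and Lemma~\ref{L-1.3} is the only nontrivial ingredient; everything downstream is bookkeeping built on the contractive properties of $\Gamma_\beta$ established in Section~4.
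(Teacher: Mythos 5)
Your proposal is correct, and its two halves relate differently to the paper's own proof.

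For the joint $\bd_p$-continuity your route is essentially the paper's: the same reduction via contractivity of $\Gamma_{\beta_k}$ (Theorem \ref{T-4.2}\,(iii)), the same simple-functions-first strategy, and the same key mechanism, namely the bound $d(\beta_1(\nu),\beta_2(\nu))\le d_p(\beta_1,\beta_2)\,\Delta$ for finitely supported $\nu$ with support of diameter $\Delta$, obtained from the definition of $d_p$ together with Lemma \ref{L-1.3}. The only real difference is where this bound is applied: you apply it to the empirical measures $\mu_n^\ffi(\omega)$ at finite $n$ and let $n\to\infty$ via Theorem \ref{T-4.2}\,(i), while the paper applies it to the limiting measure $\sum_i\xi_i(\omega)\delta_{x_i}$ in \eqref{F-6.1}, which forces it to approximate the weights $\xi_i(\omega)$ by measurable integer-valued weights $m_i(\omega)/N$ as in \eqref{F-6.2}. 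Your variant is marginally cleaner, and your anticipated ``main obstacle'' largely dissolves: $\mu_n^\ffi(\omega)$ is itself a uniform empirical measure, and so is any $\nu_m\in\cP_0(M)$, so at both places where you invoke the key estimate it is just the definition of $d_p$; the arbitrary-weight version proved by rational approximation is what the \emph{paper} needs for its limiting measure, not what you need.

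For the a.e.\ statement your route is genuinely different from, and simpler than, the paper's. The paper mimics the Banach-principle structure of its martingale theorem (Theorem \ref{T:martingale}): it introduces the maximal-type functional $W(\ffi)(\omega)=\lim_{m\to\infty}\sup_{k,l\ge m}d\bigl(\Gamma_{\beta_k}(\ffi)(\omega),\Gamma_{\beta_l}(\ffi)(\omega)\bigr)$, proves $\|W(\ffi)-W(\psi)\|_p\le2\bd_p(\ffi,\psi)$ using Theorem \ref{T-4.3} and Lemma \ref{L-2.7}\,(2), and concludes from $W=0$ on the dense set of simple functions. You instead use Theorem \ref{T-4.3} decisively: since the disintegration $(\bP_\omega)$ is taken with respect to the \emph{fixed} $\sigma$-algebra $\cI$, the identification $\Gamma_\gamma(\ffi)(\omega)=\gamma(\ffi_*\bP_\omega)$ holds a.e.\ simultaneously for $\gamma\in\{\beta\}\cup\{\beta_k:k\in\bN\}$ (a countable union of null sets), and everything reduces to the pointwise fact that $d_p(\beta_k,\beta)\to0$ implies $\beta_k(\nu)\to\beta(\nu)$ for each fixed $\nu\in\cP^p(M)$ --- which you prove by density of $\cP_0(M)$, exactly as the paper proves completeness in Proposition \ref{L:MD}. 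This shortcut is unavailable in the martingale setting, where the disintegrations $\bP_\omega^{(n)}$ vary with $n$, which is presumably why the authors kept the $W$-functional template here; its byproduct is a quantitative maximal-type inequality, whereas your argument buys brevity and needs no separate treatment of simple functions in the a.e.\ part. Both are valid proofs of the stated theorem.
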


\begin{proof}
First, assume that $\varphi$ is a simple function with values
$x_{1},\dots,x_{K}$, and let $A_{j}:=\varphi^{-1}(x_{j})$ for $1\le
j\le K$. From the proof of Theorem \ref{T-4.2} (see the paragraph
containing \eqref{F-4.5}), we recall that for any
$\beta\in\cC_p(M)$,
\begin{align}\label{F-6.1}
\Gamma_\beta(\ffi)(\omega)
=\beta\Biggl(\sum_{i=1}^K\xi_i(\omega)\delta_{x_i}\Biggr) \ \
\mbox{a.e.}\ \ \omega\in\Omega,
\end{align}
where $\xi_i(\omega):=\lim_{n\to\infty}{1\over
n}\sum_{k=0}^{n-1}\1_{A_i}(T^k\omega)$. Assume that $\{\beta_k\}$ is
a sequence in ${\mathcal C}_p(M)$ converging to $\beta$. Since
$\xi_i(\omega)\ge0$ and $\sum_{i=1}^K\xi_i(\omega)=1$ a.e., one can
choose, for any $N\in\bN$ and for a.e.\ $\omega\in\Omega$,
$m_1(\omega),\dots,m_n(\omega)\in\bN\cup\{0\}$ such that
$m_i(\omega)$'s are measurable and
$$
\sum_{i=1}^nm_i(\omega)=N,\quad\bigg|\xi_i(\omega)-{m_i(\omega)\over
N}\bigg|\le{1\over N} \quad\mbox{a.e.}\ \ \omega\in\Omega.
$$
By Lemma \ref{L-1.3}, with $\Delta:=\diam\{x_1,\dots,x_K\}$,
\begin{align}
d_p^W\Biggl(\sum_{i=1}^K\xi_i(\omega)\delta_{x_i},{1\over
N}\sum_{i=1}^K m_i(\omega)\delta_{x_i}\bigg)
&\le\Delta\left({1\over2}\sum_{i=1}^K\bigg|\xi_i(\omega)-{m_i(\omega)\over
N}\bigg|\right)^{1/p} \nonumber\\
&\le\Delta\biggl({K\over2N}\biggr)^{1/p}\ \ \mbox{a.e.}
\label{F-6.2}
\end{align}
Then, from \eqref{F-6.1} and \eqref{F-6.2} it follows that
\begin{align*}
d(\Gamma_{\beta_k}(\varphi)(\omega),\Gamma_\beta(\varphi)(\omega))
&\le
d\Biggl(\beta_k\Biggl(\sum_{i=1}^K\xi_i(\omega)\delta_{x_i}\Biggr),
\beta_k\Biggl({1\over N}\sum_{i=1}^K m_i(\omega)\delta_{x_i}\Biggr)\Biggr) \\
&\quad+d\Biggl(\beta_k\Biggl({1\over N}\sum_{i=1}^K
m_i(\omega)\delta_{x_i}\Biggr),
\beta\Biggl({1\over N}\sum_{i=1}^K m_i(\omega)\delta_{x_i}\Biggr)\Biggr) \\
&\quad+d\Biggl(\beta\Biggl({1\over N}\sum_{i=1}^K
m_i(\omega)\delta_{x_i}\Biggr),
\beta\Biggl(\sum_{i=1}^K\xi_i(\omega)\delta_{x_i}\Biggr)\Biggr) \\
&\le2\Delta\biggl({K\over2N}\biggr)^{1/p}+\Delta d_p(\beta_k,\beta)
\ \ \mbox{a.e.}
\end{align*}
Therefore, letting $N\to\infty$ gives
$$
d(\Gamma_{\beta_k}(\varphi)(\omega),\Gamma_\beta(\varphi)(\omega))
\le\Delta d_p(\beta_k,\beta)\ \ \mbox{a.e.}
$$
Since $d_p(\beta_k,\beta)\to0$, we find that
$d(\Gamma_{\beta_k}(\varphi)(\omega),\Gamma_\beta(\varphi)(\omega))\to0$
a.e.\ and $\Gamma_{\beta_k}(\varphi)\to\Gamma_{\beta}(\varphi)$ in
$\bd_p$ as $k\to\infty$.

Now, let $\varphi\in L^{p}(\Omega, M)$ be arbitrary, and pick a
sequence of simple functions $\varphi_{m}:\Omega\to M$ such that
$\bd_p(\ffi_m,\ffi)\to0$. Since
$$
\bd_p(\Gamma_{\beta_{k}}(\ffi), \Gamma_{\beta}(\ffi)) \leq
\bd_p(\Gamma_{\beta_{k}}(\ffi),\Gamma_{\beta_{k}}(\ffi_{m})) +\bd_p(
\Gamma_{\beta_{k}}(\ffi_{m}), \Gamma_{\beta}(\ffi_{m}))
+\bd_p(\Gamma_{\beta}(\ffi_{m}),\Gamma_\beta(\ffi)),
$$
one has $\bd_p(\Gamma_{\beta_{k}}(\ffi), \Gamma_{\beta}(\ffi))\to0$
from the preceding paragraph and Theorem \ref{T-4.2}\,(iii). When
$\ffi_k,\ffi\in L^p(\Omega;M)$ and $\bd_p(\ffi_k,\ffi)\to0$, one has
\begin{align*}
\bd_p(\Gamma_{\beta_k}(\ffi_k),\Gamma_\beta(\ffi))
&\le\bd_p(\Gamma_{\beta_k}(\ffi_k),\Gamma_{\beta_k}(\ffi))
+\bd_p(\Gamma_{\beta_k}(\ffi),\Gamma_\beta(\ffi)) \\
&\le\bd_p(\ffi_k,\ffi)+\bd_p(\Gamma_{\beta_k}(\ffi),\Gamma_\beta(\ffi))
\,\longrightarrow\,0\quad\mbox{as $k\to\infty$}.
\end{align*}
If $T$ is ergodic, then by Theorem \ref{T-4.2},
$d(E^{\beta_k}(\ffi_k),E^\beta(\ffi))=\bd_p(\Gamma_{\beta_k}(\ffi_k),\Gamma_\beta(\ffi))\to0$.

Next, assume that $(\Omega,\cA)$ is a standard Borel space, and let
$(\bP_\omega)_{\omega\in\Omega}$ be a disintegration of $\bP$ with
respect to $\cI:=\{A\in\cA:T^{-1}A=A\}$. The following proof of the
a.e.\ convergence is based on the same method as that of Theorem \ref{T:martingale}.
Choose an $x_0\in M$ and let $\ffi_0:=\1_\Omega{x_0}$. Since
$x_0=\beta_k(\ffi_{0*}\bP_\omega)$ for all $k$, note that for every
$\ffi\in L^p(\Omega;M)$,
\begin{align*}
d(\Gamma_{\beta_k}(\ffi)(\omega),\Gamma_{\beta_l}(\ffi)(\omega))
&\le d(\Gamma_{\beta_k}(\ffi)(\omega),x_0)+d(\Gamma_{\beta_l}(\ffi)(\omega),x_0) \\
&=d(\beta_k(\ffi_*\bP_\omega),\beta_k(\ffi_{0*}\bP_\omega))
+d(\beta_l(\ffi_*\bP_\omega),\beta_l(\ffi_{0*}\bP_\omega)) \\
&\le2\biggl[\int_\Omega
d^p(\ffi(\tau),x_0)\,d\bP_\omega(\tau)\biggr]^{1/p} \ \ \mbox{a.e.}\
\omega,
\end{align*}
where we have used Theorem \ref{T-4.3} and Lemma \ref{L-2.7}\,(2). Since
Lemma \ref{L-2.5}\,(iii) gives
$$
\int_\Omega\biggl[\int_\Omega
d^p(\ffi(\tau),x_0)\,d\bP_\omega(\tau)\biggr]d\bP(\omega)=\bd_p^p(\ffi,\ffi_0)<\infty,
$$
one can define $W(\ffi)\in L^p(\Omega;\bR)$ by
$$
W(\ffi)(\omega):=\lim_{m\to\infty}\sup_{k,l\ge m}
d(\Gamma_{\beta_k}(\ffi)(\omega),\Gamma_{\beta_l}(\ffi)(\omega))
\ \ \mbox{a.e.}
$$
When $\ffi$ is a simple function, since
$\lim_{k\to\infty}\Gamma_{\beta_k}(\ffi)(\omega)$ exists a.e.\ as
shown in the first paragraph of the proof, we have $W(\ffi)=0$ as an
element of $L^p(\Omega;\bR)$. Now it suffices to prove that $W$ is
continuous from $L^p(\Omega;M)$ into $L^p(\Omega;\bR)$. Indeed, it
then follows that for every $\ffi\in L^p(\Omega;M)$, $W(\ffi)=0$ and
hence $\lim_{k\to\infty}\Gamma_{\beta_k}(\ffi)(\omega)$ exists a.e.

To prove the above stated continuity of $W$, note that for every
$\ffi,\psi\in L^p(\Omega;M)$ and every $k,l\ge1$,
\begin{align*}
d(\Gamma_{\beta_k}(\ffi)(\omega),\Gamma_{\beta_l}(\ffi)(\omega))
&\le
d(\Gamma_{\beta_k}(\ffi)(\omega),\Gamma_{\beta_k}(\psi)(\omega))
+d(\Gamma_{\beta_k}(\psi)(\omega),\Gamma_{\beta_l}(\psi)(\omega)) \\
&\qquad+d(\Gamma_{\beta_l}(\psi)(\omega),\Gamma_{\beta_l}(\ffi)(\omega)),
\end{align*}
from which we find that
\begin{align*}
|W(\ffi)(\omega)-W(\psi)(\omega)|
&\le2\sup_{k\ge1}d(\Gamma_{\beta_k}(\ffi)(\omega),\Gamma_{\beta_k}(\psi)(\omega)) \\
&=2\sup_{k\ge1}d(\beta_k(\ffi_*\bP_\omega),\beta_k(\psi_*\bP_\omega)) \\
&\le2\biggl[\int_\Omega
d^p(\ffi(\tau),\psi(\tau))\,d\bP_\omega(\tau)\biggr]^{1/p}
\end{align*}
due to Lemma \ref{L-2.7}\,(2). Therefore, by Lemma
\ref{L-2.5}\,(iii), $W(\ffi)-W(\psi)$ is in $L^p(\Omega;\bR)$ and
$$
\|W(\ffi)-W(\psi)\|_p\le2\bd_p(\ffi,\psi),
$$
implying the desired continuity of $W$.
\end{proof}

\begin{remark}\label{R-6.3}\rm
Assume that $(\Omega,\cA)$ is a standard Borel space and $\cB$ is a
sub-$\sigma$-algebra of $\cA$. In view of Theorem \ref{T-2.9} we
have two-variable map
$$
E_\cB:\cC_p(M)\times L^p(\Omega;M)\to L^p(\Omega,\cB,\bP;M),\qquad
(\beta,\ffi)\mapsto E_\cB^\beta(\ffi).
$$
In the same way as in the proof of Theorem \ref{T:gamma}, one can
see that $E_\cB$ is continuous on $\cC_p(M)\times L^p(\Omega;M)$
with respect to the product metric and that if $\beta_k\to\beta$ in
$\cC_p(M)$ then $E_\cB^{\beta_k}(\ffi)(\omega)\to
E_\cB^\beta(\ffi)(\omega)$ a.e.\ for every $\ffi\in L^p(\Omega;M)$.
When $\cB=\cI$, this is the latter assertion of Theorem \ref{T:gamma}.
\end{remark}

In the remaining of this section we assume that $(M,d)$ is a global NPC space.
For any $x,y\in M$, there exists a unique minimal geodesic
$\gamma_{x,y}:[0,1]\to M$ such that
$\gamma_{x,y}(0)=x$ and $\gamma_{x,y}(1)=y$. Denote
$x\#_ty:=\gamma_{x,y}(t)$, $t\in [0,1]$. We note that
$x\#y:=x\#_{1/2}y$ is the unique midpoint between $x$ and $y$. One
can see that
\begin{eqnarray}\label{E:cond}(x\#_s y)\#_r(x\#_t y)=x\#_{(1-r)s+rt}
y,\qquad x=x\#_ty\,\iff\,x=y.
\end{eqnarray}
Every global NPC space satisfies the following uniform convexity
(cf.\ \cite{Ku}): for $x,y,z\in M$, $t\in [0,1]$ and $q\geq 2$,
\begin{eqnarray}\label{uniform}
d^q(z,x\#_ty)\leq
(1-t)d^q(z,x)+td^q(z,y)-\frac{k_q}{2}\,t(1-t)d^{q}(x,y),
\end{eqnarray}
where $k_2=2$ and for $q>2$,
$
k_q=\frac{8}{2^q}\frac{1+\tau_q^{q-1}}{(1+\tau_q)^{q-1}}
$
and $\tau_q\in (1,\infty)$ is the unique solution to $x^{q-1}+(1-q)x+2-q=0$.

\begin{definition}
Let $1\leq p<\infty$. For $x\in M$, $t\in [0,1]$ and
$\mu \in {\mathcal P}^{p}(M)$, define $x\#_{t} \mu\in {\mathcal
P}^{p}(M)$ by $x\#_{t} \mu := f_{*}(\mu)$, where $f:M\to M$ is the
contraction mapping $f(a)=x\#_{t}a$.
\end{definition}

Note that $x\#_{0} \mu=\delta_{x}$ and $x\#_{1} \mu=\mu$, and
$x\#_{t}\mu=\frac{1}{n}\sum_{j=1}^{n}\delta_{x\#_{t}a_{j}}$ for $
\mu=\frac{1}{n}\sum_{j=1}^{n}\delta_{a_{j}}$. One can directly see
that $x\#_t(x\#_s\mu)=x\#_{st}\mu$ for $s,t\in [0,1]$ and $\mu\in
{\mathcal P}^{p}(M)$, which implies that for any fixed $x\in M$,
$(t,\mu)\mapsto x\#_t\mu$ is a semiflow on ${\mathcal P}^{p}(M)$
under the multiplicative semigroup on $[0,1]$. We note that $1$ is
the identity on the semigroup. The map $t\mapsto e^{-t}$ is a
homeomorphic isomorphism from the additive semigroup ${\Bbb
R}_+:=[0,\infty)$ onto $(0,1]$, so $(t,\mu)\mapsto x\#_{e^{-t}}\mu$
($t\ge0$) becomes an additive semiflow.

Recall the metric space ${\mathcal C}_{p}(M)$ of $p$-contractive
barycentric maps in Proposition \ref{L:MD}.

\begin{thm}\label{T:main} Let $1\leq p<\infty$.
There exists a continuous semiflow $\Phi_p:(0,1]\times
 {\mathcal C}_p(M)\to {\mathcal C}_p(M)$ satisfying
\begin{eqnarray}\label{E:char}
x=\Phi_p(t,\beta)(\mu)\ \iff\ x=\beta(x\#_t\mu)
\end{eqnarray}
for $t\in (0,1]$, $\beta\in {\mathcal C}_p(N)$ and
$\mu\in {\mathcal P}^{p}(M)$. Furthermore, for every $\beta\in
{\mathcal C}_p(M)$,
\begin{eqnarray}\label{p-uniform}
d_p\left(\Phi_p(t,\beta), \Phi_p(s,\beta)\right)\leq
\left[\frac{k_{2p}(s+t)+2(2-k_{2p})}{4}\right]^{\frac{1}{2p}}.
\end{eqnarray}
In particular, $\lim_{t\to 0^+}\Phi_1(t,\beta)=\lambda$
for every $\beta\in {\mathcal C}_{1}(M)$, where $\lambda:{\mathcal
P}^1(M)\to M$ is the canonical barycentric map on the global NPC space
$M$ given in $(\ref{E:least})$.
\end{thm}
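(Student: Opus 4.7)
The plan is to realize $\Phi_p(t,\beta)(\mu)$ as the unique fixed point of $F_{t,\beta,\mu}(x):=\beta(x\#_t\mu)$, verify the algebraic and continuity structure, derive \eqref{p-uniform} from uniform convexity \eqref{uniform}, and finally specialize to $p=1$ (where $k_2=2$) to extract Cauchy convergence and identify the limit with $\lambda$. In an NPC space, $x\mapsto x\#_t a$ is $(1-t)$-Lipschitz for each $a$, so the push-forward coupling $a\mapsto(x\#_t a, y\#_t a)$ gives $d_p^W(x\#_t\mu, y\#_t\mu)\le(1-t)d(x,y)$; combined with the $p$-contractivity of $\beta$ this makes $F_{t,\beta,\mu}$ a strict contraction of ratio $1-t<1$ for $t\in(0,1]$, and Banach yields the unique fixed point $\Phi_p(t,\beta)(\mu)$, with \eqref{E:char} immediate. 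That $\Phi_p(t,\beta)\in\cC_p(M)$ follows since $y\#_t\delta_x=\delta_{y\#_t x}$ combined with \eqref{E:cond} forces $\Phi_p(t,\beta)(\delta_x)=x$, and for $p$-contractivity one couples $\mu,\nu$ optimally and uses the NPC joint bound $d(x\#_t a, y\#_t b)\le(1-t)d(x,y)+t\,d(a,b)$ with Minkowski to obtain $d(x,y)\le(1-t)d(x,y)+t\,d_p^W(\mu,\nu)$, hence $d(x,y)\le d_p^W(\mu,\nu)$. The semigroup $\Phi_p(s,\Phi_p(t,\beta))=\Phi_p(st,\beta)$ follows from $x\#_r(x\#_s a)=x\#_{rs}a$ (immediate from \eqref{E:cond}) and uniqueness of fixed points. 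Continuity in $\beta$ is a perturbation argument: the two fixed-point equations together with $\Delta(x\#_t\bx)\le t\Delta(\bx)$ give $t\,d(x_k,x)\le d_p(\beta_k,\beta)\cdot t\Delta(\bx)$, hence $d_p(\Phi_p(t,\beta_k),\Phi_p(t,\beta))\le d_p(\beta_k,\beta)$; continuity in $t$ on $(0,1]$ is similar.

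\textbf{The estimate \eqref{p-uniform}.} This is the technical heart. Writing $\mu=\frac{1}{n}\sum\delta_{x_i}$, $x:=\Phi_p(t,\beta)(\bx)$, $y:=\Phi_p(s,\beta)(\bx)$, the $p$-contractivity of $\beta$ applied to the two fixed-point equations with the natural coupling $a\mapsto(x\#_t a, y\#_s a)$ yields $d^p(x,y)\le\frac{1}{n}\sum_i d^p(x\#_t x_i, y\#_s x_i)$, and Jensen then gives $d^{2p}(x,y)\le\frac{1}{n}\sum_i d^{2p}(x\#_t x_i, y\#_s x_i)$. Each summand is bounded using \eqref{uniform} with exponent $q=2p$, applied iteratively inside the configuration $\{x,y,x_i\}$, to express $d^{2p}(x\#_t x_i, y\#_s x_i)$ as a combination of $d^{2p}(x,y)$, $d^{2p}(x,x_i)$, $d^{2p}(y,x_i)$ with coefficients built from $k_{2p},s,t$. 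Absorbing the $d^{2p}(x,y)$-contribution on the left and controlling $d(x,x_i),d(y,x_i)\le\Delta(\bx)$ (which holds because any $p$-contractive $\beta$ satisfies $d(\beta(\nu),x_i)\le d_p^W(\nu,\delta_{x_i})\le\Delta(\bx)$ whenever $\supp\nu\subset\{x_1,\ldots,x_n\}$) produces the stated coefficient $[k_{2p}(s+t)+2(2-k_{2p})]/4$. The delicate point is arranging the applications of \eqref{uniform} so that the $k_{2p}$-improvement over the naive bilinear bound (which comes from joint convexity of $d^2$ alone and yields a strictly weaker coefficient) survives the summation; this is where I expect the most care.

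\textbf{Convergence to $\lambda$.} At $p=1$ one has $k_2=2$, so \eqref{p-uniform} reduces to $d_1(\Phi_1(t,\beta),\Phi_1(s,\beta))^2\le(s+t)/2\to 0$ as $s,t\to 0^+$. Completeness of $(\cC_1(M),d_1)$ (Proposition \ref{L:MD}) then gives a limit $\beta_0:=\lim_{t\to 0^+}\Phi_1(t,\beta)\in\cC_1(M)$, and continuity in $\beta$ together with the semigroup yield $\Phi_1(s,\beta_0)=\lim_{t\to 0^+}\Phi_1(st,\beta)=\beta_0$ for every $s\in(0,1]$, so $\beta_0$ is a fixed point of the entire semiflow. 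To identify $\beta_0=\lambda$ I would (i) verify that $\lambda$ is itself a semiflow fixed point, using the first-order Karcher characterization $\int\mathrm{Log}_z\,d\mu=0$ at $z=\lambda(\mu)$ together with $\mathrm{Log}_z(z\#_s a)=s\,\mathrm{Log}_z(a)$ to deduce $\int\mathrm{Log}_z\,d(z\#_s\mu)=0$, hence $z=\lambda(z\#_s\mu)$; and (ii) show $\lambda$ is the only such fixed point in $\cC_1(M)$ by extracting from $\beta_0(z\#_s\mu)=z$ for all $s\in(0,1]$ (with $z=\beta_0(\mu)$) the same Karcher condition at $z$, via a first-variation argument as $s\to 0^+$ in the tangent cone at $z$. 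Step (ii), the uniqueness of contractive semiflow fixed points, is the main conceptual obstacle I anticipate.
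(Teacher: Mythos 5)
Your steps for the fixed-point construction, the verification that $\Phi_p(t,\beta)\in\cC_p(M)$, the semigroup identity, and the continuity in $(\beta,t)$ all match the paper's argument and are fine. But the two analytic cores of the theorem --- the uniform estimate \eqref{p-uniform} and the identification $\lim_{t\to0^+}\Phi_1(t,\beta)=\lambda$ --- are left as plans, and the plans point in directions that are unlikely to close. For \eqref{p-uniform} you propose to couple the two geodesics, bounding $d^{2p}(x\#_t a_i,\,y\#_s a_i)$ by iterating the uniform convexity inequality \eqref{uniform} inside the triangle $\{x,y,a_i\}$. The obstruction is that \eqref{uniform} controls the distance from an \emph{external} point $z$ to a point \emph{on} a geodesic; it says nothing directly about distances between points on two different geodesics, and the only available direct bound there is the bilinear one $d(x\#_t a_i,y\#_s a_i)\le(1-t)d(x,y)+|t-s|\,d(y,a_i)$, which after absorption yields only $d(x,y)\le\frac{|t-s|}{t}\Delta(\mathbf{a})$ --- precisely the continuity estimate, which does \emph{not} tend to $0$ as $s,t\to0^+$ (take $s=2t$) and hence cannot produce a Cauchy net. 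You flag this yourself (``this is where I expect the most care''), but flagging it is not resolving it.

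The missing idea is to test the fixed-point equation against an \emph{arbitrary} point $z$ rather than against the other fixed point: since $x=\beta(x\#_t\mu_{\mathbf a})$ and $z=\beta(\delta_z)$, contractivity gives $d^{2p}(z,x)\le\frac1n\sum_i d^{2p}(z,x\#_t a_i)$, and now \eqref{uniform} with $q=2p$ applies to each term (external point $z$, geodesic from $x$ to $a_i$), yielding after rearrangement the quasi-variational inequality
\begin{equation*}
d^{2p}(z,x)\le\frac1n\sum_{i=1}^n d^{2p}(z,a_i)-\frac{k_{2p}(1-t)}{2n}\sum_{i=1}^n d^{2p}(x,a_i)
\qquad\text{for all } z\in M,
\end{equation*}
which is \eqref{E:npc} in the paper. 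Substituting $z=y$ here and $z=x$ in the symmetric inequality for $y=\Phi_p(s,\beta)(\mu_{\mathbf a})$ and summing gives \eqref{p-uniform} at once. The same inequality also disposes of your second gap: for $p=1$, $k_2=2$, letting $t\to0^+$ in it shows that $\beta_0(\mu_{\mathbf a})$ minimizes $z\mapsto\frac1n\sum_i d^2(z,a_i)$, i.e.\ $\beta_0=\lambda$ on $\cP_0(M)$, and density plus contractivity finish the proof. This makes your proposed route via the first-order Karcher characterization ($\int\mathrm{Log}_z\,d\mu=0$) and uniqueness of semiflow fixed points unnecessary --- which is fortunate, since $\mathrm{Log}_z$ and a first-variation calculus are not available in a general global NPC space, and the uniqueness of fixed points of the semiflow (your step (ii), which you also acknowledge as an obstacle) is never established, nor needed, in the paper.
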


\begin{proof}
Let $t\in (0,1]$ and $\beta\in {\mathcal C}_p(M)$. Let $\mu\in {\mathcal P}^{p}(M)$.
Define $F:M\to M$ by $F(x):= \beta(x\#_{t}\mu)$.  We shall show that $F$ is
a strict contraction on $M$. If $\mu=\frac{1}{n}\sum_{j=1}^{n}\delta_{a_{j}}$,
\begin{align*}
d(F(x),F(y))&=d\left(\beta\left(\frac{1}{n}\sum_{j=1}^{n}\delta_{x\#_{t}a_{j}}\right),
\beta\left(\frac{1}{n}\sum_{j=1}^{n}\delta_{y\#_{t}a_{j}}\right)\right)\\
&\leq \left[\frac{1}{n}\sum_{j=1}^{n}d^p(x\#_{t}a_{j},
y\#_{t}a_{j})\right]^{1/p}\leq (1-t)d(x,y),
\end{align*}
where the last inequality follows from $d(z\#_ty,x\#_ty)\leq
(1-t)d(z,x)$.  For general $\mu\in {\mathcal P}^{p}(M)$,
 pick a sequence
$\{\mu_{n}\}\subset {\mathcal P}_{0}(M)$ converging to $\mu$ in
${\mathcal P}^{p}(M)$. Then
$$
d(\beta(x\#_t\mu_n),
\beta(x\#_t\mu))\leq d_p^W(x\#_t{\mu_n},x\#_t\mu)\leq
d_p^W(\mu_n,\mu)\to 0\quad\mbox{as $n\to\infty$},
$$
and hence
$d(F(x),F(y))=\lim_{n\to\infty}d(\beta(x\#_{t}\mu_{n}),\beta(y\#_{t}\mu_{n}))\leq(1-t)\lim_{n\to\infty}d(x,y)=(1-t)d(x,y)$
which shows that $F$ is a strict contraction and hence $x=F(x)$ has
a unique solution. Let $\Phi_p(t,\beta)(\mu)$ denote the unique
fixed point of $F$.

We will show that $\Phi_p(t,\beta)\in {\mathcal C}_p(M)$ for all
$t\in (0,1]$ and $\beta\in {\mathcal C}_p(M)$. By definition,
$\Phi_p(1,\beta)=\beta$ for all $\beta\in {\mathcal C}_p(M)$. Fix
$t\in (0,1)$. Let $z\in M$ and put $x=\Phi_p(t,\beta)(\delta_z)$.
Then $x=\beta(x\#_t\delta_z)=\beta(\delta_{x\#_tz})=x\#_tz$ and
hence $\Phi_p(t,\beta)(\delta_z)=x=z$ by {\eqref{E:cond}}. Let
$x=\Phi_p(t,\beta)(\mu)$ and $y=\Phi_p(t,\beta)(\nu)$. Then
\begin{align*}
d(x,y)&=d(\beta(x\#_t\mu),\beta(y\#_t\nu))\leq d_p^W(x\#_t\mu,y\#_t\nu)\\
&\leq \bigl[(1-t)d^p(x,y)+td_p^W(\mu,\nu)^p\bigr]^{1/p},
\end{align*}
which implies that $d(x,y)\leq d_p^W(\mu,\nu)$. Therefore,
$\Phi_p(t,\beta):{\mathcal P}^{p}(M)\to M$ is a $p$-contractive
barycentric map.

We will prove that $\Phi_p$ is a continuous semiflow on ${\mathcal
C}_{p}(M)$. Let $x=\Phi_p(t,\Phi_p(s,\beta))(\mu)$. Then
$x=\Phi_p(s,\beta)(x\#_t\mu)$, which is equivalent  to
$x=\beta(x\#_s(x\#_t\mu))=\beta(x\#_{st}\mu)$, that is,
$x=\Phi_p(st,\beta)(\mu)$. To see the continuity of $\Phi_p$, let ${\bf
a}=(a_{1},\dots,a_{n})\in M^n$, and set $\mu_{\bf
a}:=\frac{1}{n}\sum_{j=1}^{n}\delta_{a_j}$ for
notational simplicity. Let $x=\Phi_p(t,\beta_1)({\mu_{\bf a}})$
and $y=\Phi_p(s,\beta_2)(\mu_{\bf a})$. Then by the triangle inequality
and \cite[Proposition 3.8\,(2)]{LLL},
\begin{align*}
d(x,y)&=d(\beta_1(x\#_{t}\mu_{\bf a}),\beta_2(y\#_{s}\mu_{\bf a})) \\
&\leq d(\beta_1(x\#_{t}\mu_{\bf a}),\beta_2(x\#_{t}\mu_{\bf a}))
+d(\beta_2(x\#_{t}\mu_{\bf a}),\beta_2(y\#_{s}\mu_{\bf a})) \\
&\leq d_p(\beta_1,\beta_2)\Delta(x\#_{t}a_{1},\dots,x\#_{t}a_{n})
+\frac{1}{n}\sum_{j=1}^{n}d(x\#_{t}a_{j},y\#_{s}a_{j})\\
&\leq td_p(\beta_1,\beta_2)\Delta({\bf a})
+\frac{1}{n}\sum_{j=1}^{n}\left[(1-t)d(x,y)+|t-s|d(y,a_{j})\right]\\
&=td_p(\beta_1,\beta_2)\Delta({\bf a})+(1-t)d(x,y)+\frac{|t-s|}{n}\sum_{j=1}^{n}d(y,a_{j}),
\end{align*}
where $\Delta(x\#_{t}a_{1},\dots,x\#_{t}a_{n})\leq t\Delta({\bf a})$
follows from $d(x\#_{t}a_{i}, x\#_{t}a_{j})\leq td(a_{i},a_{j})$.
Moreover, since $ d(y,a_{j})=d(\Phi_p(s,\beta_2)(\mu_{\bf a}),
\Phi_p(s,\beta_2)(\delta_{a_{j}})) \leq d_p^W(\mu_{\bf
a},\delta_{a_{j}}) \leq \Delta({\bf a}), $ we find that $ d(x,y)\leq
d_p(\beta_1,\beta_2) \Delta({\bf a})+\frac{|t-s|}{t}\Delta({\bf a}).
$ This implies that
\begin{align*}
d(\Phi_p(t,\beta_1),\Phi_p(s,\beta_2))
&= \sup\limits_{\begin{subarray}{c}{\bf a}\in M^{n}, n\in {\Bbb N}\\
\Delta({\bf a})\neq 0\end{subarray}}
\frac{d(\Phi_p (t,\beta_1)(\mu_{\bf a}),
\Phi_p(s,\beta_2)(\mu_{\bf a}))}{\Delta({\bf a})}\\
&\leq d_p(\beta_1,\beta_2)+\frac{|t-s|}{t},
\end{align*}
which shows  continuity of $\Phi_p$.

Next, we shall show (\ref{p-uniform}). Let
$x=\Phi_p(t,\beta)(\mu_{\bf a})$. For any $z\in M$, from (\ref{uniform}) we have
\begin{align*}
d^{2p}(z,x)&=d^{2p}\Biggl(\beta(\delta_z),\beta\Biggl({1\over
n}\sum_{j=1}^n\delta_{x\#_ta_j}\Biggr)\Biggr)
\leq d_{p}^W\Biggl(\delta_z,\Biggl({1\over n}\sum_{j=1}^n\delta_{x\#_ta_j}\Biggr)\Biggr)^{2p}  \\
&\leq \frac{1}{n}\sum_{i=1}^{n}d^{2p}(z,x\#_{t}a_{i})\\
&\leq\frac{1}{n}\sum_{i=1}^{n}\biggl[(1-t)d^{2p}(z,x)+td^{2p}(z,a_{i})
-\frac{k_{2p}}{2}(1-t)td^{2p}(x,a_{i})\biggr]\\
&=(1-t)d^{2p}(z,x)+\frac{t}{n}\sum_{i=1}^{n}d^{2p}(z,a_{i})
-\frac{k_{2p}(1-t)t}{2n}\sum_{i=1}^{n}d^{2p}(x,a_{i}),
\end{align*}
and hence
\begin{eqnarray}\label{E:npc}
d^{2p}(z,x)\leq
\frac{1}{n}\sum_{i=1}^{n}d^{2p}(z,a_{i})-\frac{k_{2p}(1-t)}{2n}\sum_{i=1}^{n}d^{2p}(x,a_{i}).
\end{eqnarray}
Applying this with $y=\Phi_p(s,\beta)(\mu_{\bf a})$ leads to
\begin{align*}
d^{2p}(y,x)&\leq \frac{1}{n}\sum_{i=1}^{n}d^{2p}(y,a_{i})-\frac{k_{2p}(1-t)}{2n}
\sum_{i=1}^{n}d^{2p}(x,a_{i}), \\
d^{2p}(x,y)&\leq \frac{1}{n}\sum_{i=1}^{n}d^{2p}(x,a_{i})-\frac{k_{2p}(1-s)}{2n}
\sum_{i=1}^{n}d^{2p}(y,a_{i}).
\end{align*}
Summing these yields
\begin{align*}
2d^{2p}(x,y)&\leq\frac{2-k_{2p}(1-s)}{2n}\sum_{j=1}^nd^{2p}(y,a_j)
+\frac{2-k_{2p}(1-t)}{2n}\sum_{j=1}^nd^{2p}(x,a_j)\\
&\leq\frac{k_{2p}(s+t)+2(2-k_{2p})}{2}\,\Delta({\bf a})^{2p}.
\end{align*}

Finally, assume that $p=1$. Since $k_2=2$, it follows from (\ref{p-uniform}) that
$\{{\Phi_1}(t,\beta)\}_{t\in (0,1]}$ is a Cauchy net in the complete metric space
${\mathcal C}_{1}(M)$. Let
$$
\beta_{0}:=\lim_{t\to 0^+}\Phi_1(t,\beta).
$$
By (\ref{E:npc}) we have for every $z\in M$
$$
d^{2}(z,\Phi_1(t,\beta)(\mu_{\bf
a}))\leq
\frac{1}{n}\sum_{i=1}^{n}d^{2}(z,a_{i})
-\frac{(1-t)}{n}\sum_{i=1}^{n}d^{2}(\Phi_1(t,\beta)(\mu_{\bf a}),a_{i}).
$$
By taking the limit of both sides of the above as $t\to 0^{+}$, we
have
$$
d^{2}(z,\beta_0(\mu_{\bf a}))\leq \frac{1}{n}\sum_{i=1}^{n}d^{2}(z,a_{i})
-\frac{1}{n}\sum_{i=1}^{n}d^{2}(\beta_0(\mu_{\bf
a}),a_{i}),
$$
which implies that $\frac{1}{n}\sum_{i=1}^{n}d^{2}(\beta_0(\mu_{\bf
a}),a_{i})\leq \frac{1}{n}\sum_{i=1}^{n}d^{2}(z,a_{i})$ for any
$z\in M$. This shows that $\beta_0(\mu_{\bf a})= \lambda(\mu_{\bf
a})$ for all ${\bf a}=(a_{1},\dots,a_n)\in M^n$ and $n\in {\Bbb N}$.
By continuity and denseness of ${\mathcal P}_{0}(M)$ in ${\mathcal
P}^1(M)$, we have $\beta_0(\mu)=\lambda(\mu)$ for all $\mu\in
{\mathcal P}^1(M)$.
\end{proof}

\begin{remark}\rm
The canonical barycenter $\lambda:{\mathcal P}^1(M)\to M$  is then
the global attractor of the  semiflow $\Phi_p$ on ${\mathcal
C}_{p}(M)$ for $p=1$.
\end{remark}

By Theorems \ref{T:gamma} and \ref{T:main},

\begin{cor}
Let $\beta\in {\mathcal C}_1(M)$ and set $\beta_t:=\Phi_1(t,\beta)$.
Then for every $\ffi\in L^1(\Omega;M)$,
$$
\bd_1\bigl(\Gamma_{\beta_t}(\varphi),\Gamma_\lambda(\ffi)\bigr)\,\longrightarrow\,0
\quad\mbox{and}\quad
d\bigl(\Gamma_{\beta_t}(\varphi)(\omega),\Gamma_\lambda(\varphi)(\omega)\bigr)
\,\longrightarrow\,0\ \ a.e.
$$
as $t\to0^+$. If $T$ is ergodic, then $\lim_{t\to
0^+}E^{\beta_t}(\ffi)=E^\lambda(\ffi)$.
\end{cor}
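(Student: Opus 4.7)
The plan is to obtain the corollary as an immediate consequence of Theorems \ref{T:gamma} and \ref{T:main}. First I would invoke the concluding assertion of Theorem \ref{T:main} (applied with $p=1$), which yields $d_1(\beta_t,\lambda)\to0$ as $t\to0^+$, so that the family $\{\beta_t\}_{t\in(0,1]}$ converges to the canonical barycentric map $\lambda$ in the complete metric of $\cC_1(M)$. This is the only place where the semiflow machinery is used.

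Next, Theorem \ref{T:gamma} (with $p=1$) asserts continuity of the map $(\beta,\ffi)\mapsto\Gamma_\beta(\ffi)$ from $\cC_1(M)\times L^1(\Omega;M)$ into $L^1(\Omega;M)$ with respect to the product of $d_1$ and $\bd_1$. Combining this with the first step, for every fixed $\ffi\in L^1(\Omega;M)$ we get $\bd_1(\Gamma_{\beta_t}(\ffi),\Gamma_\lambda(\ffi))\to0$ as $t\to0^+$. To pass from the sequential formulation of Theorem \ref{T:gamma} to the continuous-parameter limit, I would argue by contradiction: a failure would produce $\eps>0$ and $t_n\to 0^+$ with $\bd_1(\Gamma_{\beta_{t_n}}(\ffi),\Gamma_\lambda(\ffi))\ge\eps$, contradicting sequential continuity along $\beta_{t_n}\to\lambda$ in $d_1$.

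For the almost-everywhere statement---which implicitly requires $(\Omega,\cA)$ to be standard Borel, as in the a.e.\ part of Theorem \ref{T:gamma}---the same sequential trick applies along any chosen sequence $t_n\to 0^+$. The step I expect to be the main obstacle is that Theorem \ref{T:gamma} only produces an exceptional null set that depends on the chosen sequence, whereas the corollary asserts an a.e.\ limit along the continuous parameter $t\to 0^+$. I would address this by revisiting the proof of Theorem \ref{T:gamma}: for a simple $\ffi=\sum_{i=1}^K\1_{A_i}x_i$, the inequality $d(\Gamma_{\beta_t}(\ffi)(\omega),\Gamma_\lambda(\ffi)(\omega))\le\Delta\,d_1(\beta_t,\lambda)$ actually holds for all $t\in(0,1]$ on a single null set depending only on $\ffi$, namely the exceptional set for the Birkhoff averages of the $\1_{A_i}$. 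For general $\ffi\in L^1(\Omega;M)$ I would then combine this uniform-in-$t$ bound on simple approximants with the $\bd_1$-density of simple functions and a Borel--Cantelli extraction along a subsequence $\ffi_k\to\ffi$, exactly in the spirit of the proofs of Theorems \ref{T-2.9}\,(1) and \ref{T:martingale}.

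Finally, when $T$ is ergodic, Theorem \ref{T-4.2} identifies $\Gamma_{\beta_t}(\ffi)$ with the constant function $E^{\beta_t}(\ffi)$ and $\Gamma_\lambda(\ffi)$ with $E^\lambda(\ffi)$, so the $\bd_1$-convergence established above collapses to the stated $d(E^{\beta_t}(\ffi),E^\lambda(\ffi))\to 0$ as $t\to 0^+$.
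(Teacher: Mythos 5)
Your proposal is correct and follows exactly the paper's route: the paper offers no separate argument for this corollary beyond the phrase ``By Theorems \ref{T:gamma} and \ref{T:main}'', i.e.\ it combines $d_1(\beta_t,\lambda)\to0$ from the semiflow theorem with the joint continuity (and a.e.\ convergence) statement of Theorem \ref{T:gamma}, plus Theorem \ref{T-4.2} for the ergodic case, just as you do. Your extra paragraph on passing from sequential to continuous-parameter a.e.\ convergence addresses a subtlety the paper silently glosses over, and your fix is sound: for simple $\ffi$ the bound $d(\Gamma_{\beta_t}(\ffi)(\omega),\Gamma_\lambda(\ffi)(\omega))\le\Delta\,d_1(\beta_t,\lambda)$ indeed holds for all $t$ off the single Birkhoff null set (since \eqref{F-6.1} holds there simultaneously for every barycentric map), and for general $\ffi$ the passage via simple approximants works precisely because, as in the proofs you cite, the disintegration bound of Lemma \ref{L-2.7}\,(2) together with Theorem \ref{T-4.3} controls $d(\Gamma_{\beta_t}(\ffi)(\omega),\Gamma_{\beta_t}(\ffi_k)(\omega))$ by $\int_\Omega d(\ffi,\ffi_k)\,d\bP_\omega$, uniformly in $t$.
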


\begin{remark}\rm
(1)\enspace It does not seem easy to show the convergence of the net
$\{{\Phi_p}(t,\beta)\}_{t\in (0,1]}$ as $t\to0^+$ for general $p>1$.
Although the minimizer
$$
\lambda_p(\mu_{\bf a}):=\underset{x\in
M}{\argmin}\sum_{i=1}^nd^{2p}(x,a_i)
$$
exists uniquely for every ${\bf a}=(a_{1},\dots,a_n)\in M^n$ and
$n\in {\Bbb N}$ from the uniform convexity in (\ref{uniform}) and
from \cite[Proposition 1.7]{St}, to the best of our knowledge, its
$p$-contractive property is unknown.

(2)\enspace For $\beta_1,\beta_2\in {\mathcal C}_p(M)$ and $t\in
[0,1],$ define
$(\beta_{1}\#_t\beta_2)(\mu):=\beta_{1}(\mu)\#_t\beta_{2}(\mu)$ for
$\mu\in {\mathcal P}^p(M).$ Then it is direct to see that the map
$t\mapsto \beta_{1}\#_t\beta_2$ is a minimal geodesic in ${\mathcal
C}_{p}(M)$ with respect to the complete metric $d_p,$ and also that
$(\beta_{1}\#_s\beta_2)\#_{r}(\beta_{1}\#_t\beta_2)=\beta_{1}\#_{(1-r)s+rt}\beta_2$
and  $d_p(\beta_{1}\#_t\beta_2,\beta_3\#_t\beta_4)\leq
(1-t)d_{p}(\beta_{1},\beta_3)+td_{p}(\beta_{2},\beta_4).$
 This shows
that  $({\mathcal C}_p(M), d_p)$ is a convex metric space
\cite{LL2,LLL}, or a Busemann space without  the  uniqueness of
geodesics or midpoints. Navas' approach in \cite{Na} allows us to
define on $\cC_p(M)$ the contractive barycentric map of Es-Sahib
and  Heinich \cite{EH}.
\end{remark}

\section{Large deviation principle}

First, recall the general formulation of the large deviation
principle (LDP) (cf.\ \cite{DZ}).  Let $\cX$ be a metric space and
$\cB(\cX)$ the Borel $\sigma$-algebra on $\cX$. Let
$(\mu_n)_{n=1}^\infty$ be a sequence of Borel probability measures
on $\cX$. A function $I:\cX\to[0,\infty]$ is called a {\it rate
function} if $I$ is lower semicontinuous, that is, for every
$\alpha\in[0,\infty)$ the level set $\{x\in\cX:I(x)\le\alpha\}$ is
closed. A {\it good rate function} is a rate function
$I:\cX\to[0,\infty]$ whose level sets are compact for all
$\alpha\in[0,\infty)$. It is said that $(\mu_n)$ satisfies the {\it
LDP} (in the scale $1/n$) with a rate function $I$, if for every
$\Gamma\in\cB(\cX)$,
$$
-\inf_{x\in\Gamma^\circ}I(x)
\le\liminf_{n\to\infty}{1\over n}\log\mu_n(\Gamma)
\le\limsup_{n\to\infty}{1\over n}\log\mu_n(\Gamma)
\le-\inf_{x\in\overline\Gamma}I(x),
$$
where $\Gamma^\circ$ and $\overline\Gamma$ denote the interior and the closure of $\Gamma$,
respectively.

Let $(\Omega,\cA,\bP)$ be a probability space. Let $\Sigma$ be a
Polish space and $\cP(\Sigma)$ be the set of Borel probability
measures on $\Sigma$ equipped with the weak topology. Note that the weak
topology on $\cP(\Sigma)$ is metrizable with the L\'evy-Prokhorov
metric $\rho$ and $(\cP(\Sigma),\rho)$ becomes a Polish space. Let
${\mathbf X}=(X_1,X_2,\dots)$ be a sequence of i.i.d.\
$\Sigma$-valued random variables and $\mu_0\in\cP(\Sigma)$ be their
equal distribution, i.e., $\mu_0(B)={\bf P}(X_i^{-1}(B))$ for all
$i\in {\Bbb N}$ and $B\in\cB(\Sigma)$. We define the {\it
empirical measure}
$$
\mu_n^{\mathbf X}(\omega):={1\over n}\sum_{i=1}^n\delta_{X_i(\omega)},\qquad n\in\bN,
$$
and consider the distribution $\widehat\mu_n$ of
$\mu_n^{\mathbf X}:\Omega\to \cP(\Sigma)$, i.e.,
for Borel sets $\Gamma\subset\cP(\Sigma)$,
$$
\widehat\mu_n(\Gamma):=\bP(\mu_n^{\mathbf X}\in\Gamma)
=\mu_0^{\times n}\biggl(\biggl\{(x_1,\dots,x_n)\in\Sigma^n:
{1\over n}\sum_{i=1}^n\delta_{x_i}\in\Gamma\biggr\}\biggr).
$$
Then the celebrated {\it Sanov theorem} is

\begin{thm}\label{T-5.1}
The distributions $(\widehat\mu_n)$ of the empirical measures
$(\mu_n^{\mathbf X})$ satisfies the LDP with the relative entropy
functional $S(\cdot\|\mu_0)$ as the good rate function, where the
relative entropy (or the Kullback-Leibler divergence)
$S(\mu\|\mu_0)$ is defined by
$$
S(\mu\|\mu_0):=\begin{cases}
\int_\Sigma\log{d\mu\over d\mu_0}\,d\mu & \mbox{if $\mu\ll\mu_0$ $($absolutely continuous$)$}, \\
\infty & \mbox{otherwise}.
\end{cases}
$$
\end{thm}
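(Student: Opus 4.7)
The plan is to prove the two bounds of the LDP separately, obtain the good-rate-function property, and then verify that the rate function is indeed $S(\cdot\|\mu_0)$. I will use the standard Donsker--Varadhan variational characterization
\[
S(\mu\|\mu_0)=\sup_{f\in C_b(\Sigma)}\biggl[\int_\Sigma f\,d\mu-\log\int_\Sigma e^f\,d\mu_0\biggr],
\]
which is the entry point for both bounds.

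For the upper bound, I would start by fixing $f\in C_b(\Sigma)$ and applying the exponential Chebyshev inequality combined with independence: for each $a\in\bR$,
\[
\bP\biggl[\int_\Sigma f\,d\mu_n^{\mathbf X}\ge a\biggr]
\le e^{-na}\,\bE\bigl[e^{n\int f\,d\mu_n^{\mathbf X}}\bigr]
=e^{-na}\Bigl(\int_\Sigma e^{f}\,d\mu_0\Bigr)^{n}.
\]
Taking logarithms, dividing by $n$, and letting $n\to\infty$ gives a half-space upper bound governed by $\int f\,d\mu-\log\int e^f\,d\mu_0$. Taking the supremum over $f$ converts this into a bound in terms of $S(\cdot\|\mu_0)$. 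To globalize from half-spaces to arbitrary closed sets I would use exponential tightness (the family $(\widehat\mu_n)$ is exponentially tight because $\mu_0$ is tight on the Polish space $\Sigma$, and one applies Chebyshev to the exponential moments of $\mathbf 1_{K^c}$ for a large compact $K$), together with the standard lemma reducing the LDP upper bound on closed sets to upper bounds on neighborhoods of points.

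For the lower bound, I would use a change of measure (exponential tilting) argument. Given $\mu\in\cP(\Sigma)$ with $\mu\ll\mu_0$ and $S(\mu\|\mu_0)<\infty$, form the product measure $\mu^{\otimes n}$ and write
\[
\widehat\mu_n(\Gamma)=\int_{\{(x_1,\dots,x_n):\,\frac1n\sum\delta_{x_i}\in\Gamma\}}
\prod_{i=1}^n\frac{d\mu_0}{d\mu}(x_i)\,d\mu^{\otimes n}(x_1,\dots,x_n),
\]
valid on the support of $\mu$. Under $\mu^{\otimes\infty}$ the classical law of large numbers gives $\mu_n^{\mathbf X}\to\mu$ weakly $\mu^{\otimes\infty}$-a.s., and the strong law applied to $\log\frac{d\mu_0}{d\mu}$ gives $\frac1n\sum_i\log\frac{d\mu_0}{d\mu}(X_i)\to-S(\mu\|\mu_0)$ a.s. A truncation argument (applied to $\log\frac{d\mu_0}{d\mu}$) combined with Egorov's theorem then yields, for any open $\Gamma$ containing $\mu$,
\[
\liminf_{n\to\infty}\frac1n\log\widehat\mu_n(\Gamma)\ge-S(\mu\|\mu_0).
\]
Since any open $\Gamma\subset\cP(\Sigma)$ can be approximated by such neighborhoods around arbitrary $\mu\in\Gamma$ with $S(\mu\|\mu_0)<\infty$ (the latter being dense in $\Gamma$ whenever $\Gamma\cap\{S<\infty\}\ne\emptyset$, by convolution-type approximation of $\mu\in\Gamma$ by mixtures $(1-\eps)\mu+\eps\mu_0$), the full lower bound follows.

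Finally, to show $S(\cdot\|\mu_0)$ is a good rate function, I would combine lower semicontinuity (immediate from the Donsker--Varadhan formula as a supremum of weakly continuous affine maps) with compactness of sublevel sets: by Pinsker's inequality or directly via Markov's inequality applied to $e^{S(\mu\|\mu_0)}$-type bounds, one shows $\{S(\cdot\|\mu_0)\le\alpha\}$ is tight, hence relatively compact by Prokhorov's theorem, and closed by lower semicontinuity. The main obstacle, as always for Sanov, is the lower bound: making the tilting/truncation rigorous when $\log\frac{d\mu_0}{d\mu}$ is unbounded requires a delicate interplay between the a.s.\ convergence under the tilted law and the weak-topology open neighborhoods of $\mu$ in $\cP(\Sigma)$, and it is here that the Polish assumption on $\Sigma$ is essential.
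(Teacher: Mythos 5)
The paper offers no proof of this statement at all: Theorem \ref{T-5.1} is quoted as the classical Sanov theorem with a citation to \cite{DZ}, and the authors' actual work in Section 6 is the contraction-principle consequence, Theorem \ref{T-5.2}. So there is no in-paper argument to compare with; what you have written is a reconstruction of the standard textbook proof, and as a sketch it is essentially sound --- the Donsker--Varadhan variational formula plus exponential Chebyshev and exponential tightness for the upper bound, and exponential tilting plus the strong law under the tilted product measure for the lower bound, are exactly the classical ingredients. Three points deserve repair. First, Pinsker's inequality is not the right tool for tightness of the sublevel sets $\{S(\cdot\|\mu_0)\le\alpha\}$: for large $\alpha$ the total-variation bound $\sqrt{\alpha/2}$ exceeds $1$ and says nothing. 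What one actually uses is the entropy inequality
$$
\mu(A)\le\frac{S(\mu\|\mu_0)+\log 2}{\log\bigl(1/\mu_0(A)\bigr)},
$$
obtained from the variational formula with $f=t\1_A$ and $t=\log(1/\mu_0(A))$, applied to $A=K^c$ for compacts $K$ with $\mu_0(K^c)$ small. Second, your truncation/Egorov step in the lower bound is unnecessary: if $S(\mu\|\mu_0)<\infty$, the negative part of $\log\frac{d\mu}{d\mu_0}$ is automatically $\mu$-integrable (since $h\log(1/h)\le e^{-1}$ for $0<h\le1$), so the strong law applies directly to $\log\frac{d\mu_0}{d\mu}$, and on the event $\{\mu_n^{\mathbf X}\in\Gamma\}\cap\bigl\{\frac1n\sum_i\log\frac{d\mu_0}{d\mu}(X_i)\ge-S(\mu\|\mu_0)-\eps\bigr\}$, whose tilted probability tends to $1$, the change-of-measure integrand is at least $e^{-n(S(\mu\|\mu_0)+\eps)}$. (Note also that your change-of-measure identity should be an inequality $\ge$, restricting to the event that every $x_i$ lies in $\{d\mu/d\mu_0>0\}$; equality would need $\mu_0\ll\mu$, but the inequality is all the lower bound requires.) Third, the mixture approximation $(1-\eps)\mu+\eps\mu_0$ is likewise not needed: the bound $\liminf_n\frac1n\log\widehat\mu_n(\Gamma)\ge-\inf_{\mu\in\Gamma}S(\mu\|\mu_0)$ follows by running the tilting argument at each $\mu\in\Gamma$ of finite entropy, with $\Gamma$ itself serving as the open neighborhood. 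With these repairs your outline is a faithful proof of the theorem; for the purposes of this paper, though, the intended route is simply the citation, and the genuine content of Section 6 begins with Theorem \ref{T-5.2}.
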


Now, assume that $(M,d)$ be a complete metric space and let ${\mathbf
X}=(X_1,X_2,\dots)$ be a sequence of i.i.d.\ $M$-valued random
variables. Assume that the distribution $\mu_0$ of $X_i$ is in
$\cP^\infty(M)$, i.e., $X_i\in L^\infty(\Omega;M)$. Since a strongly
measurable $M$-valued function has a separable range except on a
$\bP$-null set, one can choose a separable closed subset $M_0$ of
$M$ such that $X_i(\omega)\in M_0$ for all $i\in\bN$ and a.e.\
$\omega\in\Omega$ (or $\mu_0$ is supported on $M_0$). Moreover,
choose an $x_0\in M$ and let
$\alpha:={\mathrm{ess\,sup}}_{\omega\in\Omega}d(X_1(\omega),x_0)<\infty$.
Then $X_i(\omega)\in\Sigma:=\{x\in M_0:d(x,x_0)\le\alpha\}$ for all
$i\in\bN$ and a.e.\ $\omega\in\Omega$. Note that $\Sigma$ ($\subset
M$) is a Polish space, and we may assume that $X_i$'s are
$\Sigma$-valued random variables. Hence the Sanov LDP holds for the
sequence ${\mathbf X}=(X_1,X_2,\dots)$.

Let $1\le p<\infty$ and $\beta:\cP^p(M)\to M$ be a $p$-contractive
barycentric map. Note that $\cP(\Sigma)$ is a subset of $\cP^p(M)$.
Since $\Sigma$ is bounded, it follows from \cite[Theorem 7.12]{Vi}
that the $d_p^W$-topology on $\cP(\Sigma)$ coincides with the weak
topology on $\cP(\Sigma)$. Hence
$\mu\in\cP(\Sigma)\mapsto\beta(\mu)$ is a continuous map from
$\cP(\Sigma)$ equipped with the weak topology to $(M,d)$.
 Note that the
push-forward of $\widehat\mu_n$ by $\beta|_{\cP(\Sigma)}$ is the
distribution of $\beta(\mu_n^{\mathbf X})$, i.e., for every
$\Gamma\in\cB(M)$,
\begin{align*}
\widehat\mu_n(\{\mu\in\cP(\Sigma):\beta(\mu)\in\Gamma\})
&=\bP(\beta(\mu_n^{\mathbf X})\in\Gamma) \\
&=\bP\biggl(\biggl\{\omega\in\Omega: \beta\biggl({1\over
n}\sum_{i=1}^n\delta_{X_i(\omega)}\biggr)\in\Gamma\biggr\}\biggr).
\end{align*}
Therefore, from Theorem \ref{T-5.1}, applying the contraction principle for LDP (see
\cite[Theorem 4.2.1]{DZ}) with the continuous map $\beta:\cP(\Sigma)\to M$, we have the
following:

\begin{thm}\label{T-5.2}
With the above definitions and assumptions, the distribution of the $M$-valued random variable
$\beta(\mu_n^{\mathbf X})=\beta\bigl({1\over n}\sum_{i=1}^n\delta_{X_i}\bigr)$ satisfies the
LDP with the good rate function
\begin{align}\label{F-5.1}
I(x):=\inf\{S(\mu\|\mu_0):\mu\in\cP(\Sigma),\,x=\beta(\mu)\},\qquad x\in M.
\end{align}
That is, for every $\Gamma\in\cB(M)$,
\begin{align}
-\inf_{x\in\Gamma^\circ}I(x)
&\le\liminf_{n\to\infty}{1\over n}\log
\bP\bigl(\beta(\mu_n^{\mathbf X})\in\Gamma\bigr) \nonumber\\
&\le\limsup_{n\to\infty}{1\over n}\log
\bP\bigl(\beta(\mu_n^{\mathbf X})\in\Gamma\bigr) \label{F-5.2}
\le-\inf_{x\in\overline\Gamma}I(x).
\end{align}
\end{thm}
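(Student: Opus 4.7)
The plan is to deduce the theorem from Sanov's theorem (Theorem \ref{T-5.1}) via the contraction principle for large deviations. The preliminary reduction has already been set up just before the theorem statement: by strong measurability and the hypothesis $\mu_0 \in \cP^\infty(M)$, one chooses a separable closed $M_0 \subset M$ and a radius $\alpha < \infty$ so that $\Sigma := \{x \in M_0 : d(x,x_0) \le \alpha\}$ is a Polish space with $X_i(\omega) \in \Sigma$ for all $i$ and a.e.\ $\omega$. Thus we may treat $(X_1,X_2,\dots)$ as i.i.d.\ $\Sigma$-valued variables with common law $\mu_0 \in \cP(\Sigma)$.

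First I would apply Theorem \ref{T-5.1} to this $\Sigma$-valued i.i.d.\ sequence: the laws $\widehat\mu_n$ of $\mu_n^{\mathbf X} : \Omega \to \cP(\Sigma)$ satisfy the LDP on $\cP(\Sigma)$ (endowed with the weak topology) with the good rate function $S(\cdot \| \mu_0)$.

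Next I would verify that the restriction $\beta|_{\cP(\Sigma)} : \cP(\Sigma) \to M$ is continuous, which is the only real technical point. Since $\Sigma$ is bounded, every probability measure on $\Sigma$ lies in $\cP^p(M)$ for every $p \in [1,\infty)$, and the weak topology on $\cP(\Sigma)$ coincides with the $p$-Wasserstein topology by \cite[Theorem 7.12]{Vi}. Combined with the contractivity hypothesis $d(\beta(\mu),\beta(\nu)) \le d_p^W(\mu,\nu)$ from \eqref{F-2.1}, this yields continuity of $\beta|_{\cP(\Sigma)}$ from $(\cP(\Sigma),\text{weak}) $ to $(M,d)$.

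Finally I would invoke the contraction principle \cite[Theorem 4.2.1]{DZ}: pushing forward $\widehat\mu_n$ by the continuous map $\beta|_{\cP(\Sigma)}$ preserves the LDP, with transformed good rate function
\[
I(x) = \inf\{S(\mu\|\mu_0) : \mu \in \cP(\Sigma),\ \beta(\mu)=x\},
\]
exactly as in \eqref{F-5.1}. Since this push-forward is precisely the distribution of $\beta(\mu_n^{\mathbf X})$, the inequalities \eqref{F-5.2} follow immediately. There is no essential obstacle here; the only subtlety to keep in mind is the identification of the weak and Wasserstein topologies on $\cP(\Sigma)$ that makes the contractivity of $\beta$ imply continuity, and hence legitimizes the contraction principle.
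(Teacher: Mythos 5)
Your proposal is correct and follows essentially the same route as the paper: the reduction to the bounded Polish space $\Sigma$, the Sanov LDP for $\widehat\mu_n$, the continuity of $\beta|_{\cP(\Sigma)}$ via the coincidence of the weak and $d_p^W$-topologies on $\cP(\Sigma)$ (\cite[Theorem 7.12]{Vi}) together with the $p$-contractivity \eqref{F-2.1}, and finally the contraction principle \cite[Theorem 4.2.1]{DZ} applied to the push-forward, which is the law of $\beta(\mu_n^{\mathbf X})$. The paper's argument is precisely this chain, so there is nothing to add.
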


The above LDP is a stronger version of the strong law of large numbers for the $\beta$-value
$\beta\bigl({1\over n}\sum_{i=1}^n\delta_{X_i}\bigr)$ of the empirical measure, given in
\cite[Proposition 6.6]{St}. Let $x_0:=\beta(\mu_0)$. Since $S(\cdot\|\mu_0)$ is a good rate
function on $\cP(\Sigma)$, for every $x\in M$ with $I(x)<\infty$ there is a $\mu\in\cP(\Sigma)$
such that $x=\beta(\mu)$ and $I(x)=S(\mu\|\mu_0)$. Therefore, from the strict positivity of
the relative entropy, we see that $I(x)>0$ whenever $x\ne x_0$. For any $\eps>0$ take a closed
set $F:=\{x\in M:d(x,x_0)\ge\eps\}$; then the LDP upper bound in \eqref{F-5.2} gives
$$
\limsup_{n\to\infty}{1\over n}\log
\bP\bigl(\beta(\mu_n^{\mathbf X})\in F\bigr)
\le-\inf_{x\in F}I(x)<-\alpha\quad\mbox{for some $\alpha>0$},
$$
since $I$ is a good rate function. This implies that
$$
\sum_{n=1}^\infty\bP\bigl(\bigl\{\omega:d(\beta(\mu_n^{\mathbf X})(\omega),x_0)
\ge\eps\bigr\}\bigr)<\infty,
$$
so the Borel-Cantelli lemma yields that
$$
\bP\biggl(\limsup_{n\to\infty}\bigl\{\omega:d(\beta(\mu_n^{\mathbf
X})(\omega),x_0) \ge\eps\bigr\}\biggr)=0,
$$
which implies that $d(\beta(\mu_n^{\mathbf X})(\omega),x_0)\to0$ as $n\to\infty$.
We thus have the strong law of large numbers in \cite[Proposition 6.6]{St}.

\begin{cor}\label{C-5.3}
Let $X_1,X_2,\dots$ be a sequence of i.i.d.\ $M$-valued random variables having the
distribution $\mu_0\in\cP^\infty(M)$. Then
$$
\beta\Biggl({1\over n}\sum_{i=1}^n\delta_{X_i(\omega)}\Biggr)
\ \longrightarrow\ \beta(\mu_0)\ \ \mbox{a.e.}\quad\mbox{as $n\to\infty$}.
$$
\end{cor}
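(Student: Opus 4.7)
The plan is to deduce the a.s.\ convergence directly from the large deviation upper bound in Theorem \ref{T-5.2}. Writing $x_0:=\beta(\mu_0)$ and fixing $\varepsilon>0$, I would introduce the closed set $F_\varepsilon:=\{x\in M:d(x,x_0)\ge\varepsilon\}$ and show that $c_\varepsilon:=\inf_{x\in F_\varepsilon}I(x)$ is strictly positive. Once this is secured, the LDP gives $\bP(d(\beta(\mu_n^{\mathbf X}),x_0)\ge\varepsilon)\le e^{-nc_\varepsilon/2}$ for all sufficiently large $n$, which is summable in $n$; the Borel-Cantelli lemma then yields $\bP\bigl(\limsup_n\{d(\beta(\mu_n^{\mathbf X}),x_0)\ge\varepsilon\}\bigr)=0$, and intersecting over $\varepsilon=1/k$, $k\in\bN$, produces a single $\bP$-null set off which $\beta(\mu_n^{\mathbf X})(\omega)\to x_0$.

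The positivity of $c_\varepsilon$ is the only point that deserves care, and it rests on two ingredients. First, by the classical strict positivity of the Kullback-Leibler divergence, $S(\mu\|\mu_0)=0$ if and only if $\mu=\mu_0$; combined with the defining formula
$$
I(x)=\inf\{S(\mu\|\mu_0):\mu\in\cP(\Sigma),\,\beta(\mu)=x\},
$$
this forces $I(x)=0$ to imply $\mu_0$ lies among the feasible $\mu$'s with $\beta(\mu)=x$, hence $x=\beta(\mu_0)=x_0$. Thus $I$ vanishes only at $x_0$, and $x_0\notin F_\varepsilon$. Second, $I$ is a \emph{good} rate function, so each sublevel set $\{I\le\alpha\}$ is compact; the lower semicontinuous $I$ attains its infimum over the closed set $F_\varepsilon\cap\{I\le\alpha\}$ (for any $\alpha>\inf_{F_\varepsilon}I$), whence $c_\varepsilon=I(x_\ast)$ for some $x_\ast\in F_\varepsilon$, and $c_\varepsilon>0$.

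I do not foresee any real obstacle: the heavy lifting has already been carried out in Theorem \ref{T-5.2} (which rests on the Sanov theorem and the contraction principle applied to the continuous map $\beta:\cP(\Sigma)\to M$, continuity holding because the $d_p^W$-topology and the weak topology coincide on $\cP(\Sigma)$ by \cite[Theorem 7.12]{Vi}). What remains is the standard deduction of an almost sure limit from an exponential large-deviation bound through Borel-Cantelli, which is routine. The only point where I would be careful in writing up is the justification that the infimum in the definition of $I(x_0)$ is achieved and equals zero (taking $\mu=\mu_0$ gives $S(\mu_0\|\mu_0)=0$), so that the bound $c_\varepsilon>0$ cannot be improved to a statement about $x_0$ itself, which is of course unnecessary for the conclusion.
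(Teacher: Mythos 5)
Your overall strategy --- strict positivity of the rate function away from $x_0=\beta(\mu_0)$, the LDP upper bound over the closed set $F_\varepsilon$, then Borel--Cantelli and intersection over $\varepsilon=1/k$ --- is exactly the route the paper takes. However, your first ingredient, as stated, contains a genuine gap. From $I(x)=\inf\{S(\mu\|\mu_0):\mu\in\cP(\Sigma),\,\beta(\mu)=x\}=0$ you conclude that ``$\mu_0$ lies among the feasible $\mu$'s with $\beta(\mu)=x$.'' That inference is invalid: an infimum equal to zero need not be attained, so $I(x)=0$ only yields a sequence $\mu_k$ with $\beta(\mu_k)=x$ and $S(\mu_k\|\mu_0)\to0$, not a feasible measure with zero divergence, and the strict positivity of the Kullback--Leibler divergence says nothing about such a sequence. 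Note that your second ingredient (goodness of $I$) gives attainment of $\inf_{F_\varepsilon}I$ at some $x_\ast\in F_\varepsilon$, but it does \emph{not} give attainment of the infimum of $S(\cdot\|\mu_0)$ defining $I(x_\ast)$, which is what your argument actually needs to conclude $I(x_\ast)>0$.

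The repair is precisely the step the paper inserts first: since $S(\cdot\|\mu_0)$ is a \emph{good} rate function on $\cP(\Sigma)$ (Sanov, Theorem \ref{T-5.1}), for every $x$ with $I(x)<\infty$ the set $\{\mu\in\cP(\Sigma):\beta(\mu)=x,\ S(\mu\|\mu_0)\le I(x)+1\}$ is a closed subset of a compact sublevel set --- closedness using that $\beta$ is continuous on $\cP(\Sigma)$ in the weak topology --- so the lower semicontinuous $S(\cdot\|\mu_0)$ attains its minimum there, and the infimum defining $I(x)$ is achieved whenever it is finite. Only then does $I(x)=0$ force $S(\mu\|\mu_0)=0$ for some feasible $\mu$, hence $\mu=\mu_0$ and $x=x_0$. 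Incidentally, your closing remark places the attainment worry at $I(x_0)$, which is the trivial case ($\mu=\mu_0$ is feasible there); the attainment that genuinely needs justification is at points $x\ne x_0$, as above. With this one step supplied, your proof coincides with the paper's.
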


\begin{remark}\label{R-5.4}\rm
A point of the above argument is that although the Sanov LDP is
concerned with the weak topology on $\cP(M)$, the contractive
barycentric map $\beta$ on $\cP^p(M)$ is continuous with respect to
the Wasserstein distance $d_p^W$ so that $\beta$ is not necessarily
continuous with respect to the weak topology. This is the reason why
we have to assume that the i.i.d.\ random variables $X_1,X_2,\dots$
have a bounded support, i.e., the distribution measure is in
$\cP^\infty(M)$.
\end{remark}

\begin{example}\label{E-5.5}\rm
Let $X_1,X_2,\dots$ be a sequence i.i.d.\ random variables with
values in a finite set $\{A_1,\dots,A_K\}$ in
$\mathbb{P}=\mathbb{P}(\cH)$, whose distribution is
$\mu_0=\sum_{j=1}^Kw_j\delta_{A_j}$, where $w_j>0$ and $\sum_{j=1}^Kw_j=1$.
Let $G$ be the Karcher barycenter on $\bbP$, and consider the
$G$-value of the empirical measure
$$
G\Biggl({1\over n}\sum_{i=1}^n\delta_{X_i(\omega)}\Biggr)
=G(X_1(\omega),\dots,X_n(\omega)).
$$
By Theorem \ref{T-5.2} the distribution of $\bbP$-valued random variable
$G(X_1(\omega),\dots,X_n(\omega))$ satisfies the LDP with the good
rate function
$$
I(A):=\inf\Biggl\{\sum_{j=1}^Kp_j\log{p_j\over w_j}:
A={G\Biggl(\sum_{j=1}^Kp_j\delta_{A_j}\Biggr)}\Biggr\}
\quad\mbox{for $A\in\bbP$}.
$$
Let $\Delta_K$ be the set of all $K$-dimensional probability
vectors, and let
$$
\Gamma_G(A_1,\dots,A_K):=\Biggl\{G\Biggl(\sum_{j=1}^Kp_j\delta_{A_j}\Biggr):
(p_1,\dots,p_K)\in\Delta_K\Biggr\}.
$$
Assume that $A_1,\dots,A_K$ are ``in general position" with
respect $G$ in the sense that $(p_1,\dots,p_K)\in\Delta_K\mapsto
G\bigl(\sum_{j=1}^Kp_j\delta_{A_j}\bigr)\in\bbP$ is
one-to-one. In this case, the above rate function is written as
$$
I(A)=\begin{cases}\sum_{j=1}^Kp_j\log{p_j\over w_j}
& \mbox{if $A\in\Gamma_G(A_1,\dots,A_K)$ and $A=G\bigl(\sum_{j=1}^Kp_j\delta_{A_j}\bigr)$}, \\
\infty & \mbox{if $A\not\in\Gamma_G(A_1,\dots,A_K)$}.
\end{cases}
$$
\end{example}

\section{Acknowledgements}

The work of F.~Hiai was supported in part by Grant-in-Aid for
Scientific Research (C)17K05266. The work of Y. Lim was supported by
the National Research Foundation of Korea (NRF) grant funded by the
Korea government (MEST) No.NRF-2015R1A3A2031159.

\end{document}